\documentclass[11pt]{article}
\usepackage{amssymb,amsmath,amsfonts,amsthm,latexsym,amscd,mathrsfs,mathtools}
\usepackage{enumitem,microtype}
\usepackage{euscript, epic, eepic,epsfig,color}
\usepackage{graphicx}
\usepackage{verbatim}
\usepackage{fourier}
\usepackage{tikz}

\usepackage[colorlinks,linkcolor=black,anchorcolor=black,citecolor=black,hyperindex=true,CJKbookmarks=true]{hyperref}

\makeatletter
\newcommand{\subsectionruninhead}{\@startsection{subsection}{2}{0mm}
{-\baselineskip}{-0mm}{\bf\large}}
\newcommand{\subsubsectionruninhead}{\@startsection{subsubsection}{3}{0mm}
{-\baselineskip}{-0mm}{\bf\normalsize}}
\makeatother

\newtheorem*{theorem*}{Theorem}

\newtheorem{theoremalph}{Theorem}
\newtheorem{corollaryalph}[theoremalph]{Corollary}

\newtheorem*{proposition*}{Proposition}
\newtheorem*{corollary*}{Corollary}
\newtheorem*{claim*}{Claim}
\newtheorem*{remark*}{Remark}
\newtheorem*{problem*}{Problem}
\newtheorem*{question*}{Question}
\newtheorem{theorem}{Theorem}[section]

\newtheorem{proposition}[theorem]{Proposition}

\newtheorem{lemma}[theorem]{Lemma}

\theoremstyle{definition}
\newtheorem{definition}[theorem]{Definition}
\newtheorem{remark}[theorem]{Remark}

\numberwithin{equation}{section}

\newcommand{\eps}{\varepsilon}
\newcommand{\R}{\mathbb{R}}
\newcommand{\bbR}{\mathbb R}
\newcommand{\bbZ}{\mathbb Z}

\newcommand{\htop}{h_{\text{top}}}

\allowdisplaybreaks

\begin{document}

\title{Complete realization of multifractal entropy spectra and pressure functions}

\author{{Xiaobo Hou$^{1,\S}$, Wanshan Lin$^{2,\dag}$ and Xueting Tian$^{2,\ddag}$}\\
	{\em\small$^1$ School of Mathematical Science,   Dalian University of Technology}\\
	{\em\small Dalian 116024, People's Republic of China}\\
	{\em\small$^2$ School of Mathematical Science,  Fudan University}\\
	{\em\small Shanghai 200433, People's Republic of China}\\
	{\small $^\S$Email: xiaobohou@dlut.edu.cn;
	$^{\dag}$Email: wanshanlin@fudan.edu.cn}\\
	{\small $^{\ddag}$Email: xuetingtian@fudan.edu.cn}\\}

\footnotetext{Key words and phrases: Multifractal entropy spectrum; Birkhoff averages; rotation set; topological entropy; topological pressure; shifts of finite type.}

\footnotetext{2020 Mathematics Subject Classification: 37B10, 37D35, 37A35, 37C45, 37A50.}

\maketitle

\begin{abstract}
We give a complete characterization of the multifractal
entropy spectra arising from continuous vector-valued potentials on transitive two-sided shifts of finite type.  We prove that, in
every finite dimension, any nonnegative upper semicontinuous concave function
on a compact convex set that attains the topological entropy as its
maximum at a unique point is realized as the entropy spectrum of a potential
whose rotation set is precisely that set.  Every such spectrum moreover
admits arbitrarily many pairwise non-cohomologous realizations.

Via Legendre--Fenchel duality, this characterization yields complete pressure flexibility over the entire parameter space.  In
particular, it resolves the whole-space problem posed by Kucherenko and Quas \cite{KQ2022}.  A separate
construction based on entropy paths extends scalar spectrum and pressure
realization to a substantially broader class of dynamical systems.  Finally,
with respect to the closed-graph Hausdorff metric, we prove that the spectrum
map is lower semicontinuous in every finite dimension, whereas upper
semicontinuity fails on a dense set for scalar potentials on transitive
shifts of finite type.
\end{abstract}

\tableofcontents

\section{Introduction}
\qquad A natural motivation for our problem comes from the \emph{flexibility program} in dynamics, initiated by Katok, which asks whether dynamical invariants can realize all values compatible with their general constraints within a prescribed class of systems. Considerable progress has been made in this direction for several important invariants, including entropy, Birkhoff averages, Lyapunov exponents, and pressure functions; see, for instance,  \cite{Katok1980,HuangXuXu2021,LiShiWangWang2020,Sun2025,Burguet2020,ChandgotiaMeyerovitch2021,QuasSoo2016,KQ2022,GuanSunWu2017,BochiKatokRodriguezHertz2021,Ures2012,LiOprocha2018,YangZhang2020,KoniecznyKupsaKwietniak2018,DGR2019,TianWangWang2019,DHT,HouTian2023} and the references therein. Most of these results concern numerical invariants. We instead study a functional invariant: the multifractal entropy spectrum associated with a continuous vector-valued potential.

This question arises naturally from multifractal analysis. Multifractal analysis investigates level sets defined by asymptotic quantities, such as Birkhoff averages, Lyapunov exponents, and local dimensions, and quantifies their size by entropy, dimension, or related characteristics; see, for instance, \cite{Fan2021,HeWolf2022,BJKR2021,Climenhaga2013,JohanssonJordanObergPollicott2010,Jenkinson2001,DGR2019,IommiJordan2015,JT2021,JordanRams2021,PfisterSullivan2007,LiuLiu2025,BS2001,BarreiraHolanda2021,KucherenkoWolf2014,PesinSadovskaya2001}. The associated spectra record how these asymptotic behaviors are distributed throughout the system.

Let $(X,f)$ be a dynamical system, where $X$ is a compact metric space and
$f:X\to X$ is continuous.  For $m\geq1$, let $C(X,\mathbb R^m)$ denote the
space of continuous maps from $X$ to $\mathbb R^m$, endowed with the uniform
norm
$
\|\Phi\|_\infty:=\sup_{x\in X}|\Phi(x)|.
$
An element $\Phi\in C(X,\mathbb R^m)$ is called a continuous vector-valued
potential.  Given $\alpha\in\mathbb R^m$, define the
\emph{Birkhoff level set}
$$
L(\Phi,\alpha;X,f)
:=
\left\{x\in X:
\lim_{n\to\infty}\frac1n\sum_{k=0}^{n-1}\Phi(f^kx)=\alpha
\right\},
$$
and the \emph{rotation set}
$$
R(\Phi;X,f)
:=
\left\{\alpha\in\mathbb R^m:L(\Phi,\alpha;X,f)\ne\emptyset\right\}.
$$
The associated multifractal entropy spectrum is
$$
E_\Phi(\alpha)
:=
\htop(f,L(\Phi,\alpha;X,f)),
\qquad\text{for every }\alpha\in R(\Phi;X,f),
$$
where $\htop(f,Y)$ denotes Bowen topological entropy of a set $Y\subset X$.
To clarify the terminology used throughout, write
$\Phi=(\varphi_1,\ldots,\varphi_m)$.  We call the general $m$-dimensional
setting \emph{multiparameter}.  The term
\emph{scalar} refers specifically to $m=1$: the potential is then a
real-valued function, usually denoted by $\varphi\in C(X)=C(X,\mathbb R)$,
and its rotation set is an interval in $\mathbb R$.  All dimensions in this
paper are finite.

Classical multifractal analysis begins with a prescribed potential and
studies the entropy, pressure, or dimension of its Birkhoff level sets.  This
direct problem leads to variational formulas, regularity properties of the
resulting spectra, and Legendre--Fenchel duality with pressure.  We consider
the complementary inverse problem: rather than fixing a potential and
determining its spectrum, we prescribe the entire entropy spectrum and ask
whether it can be realized by a continuous potential.  Our aim is therefore
not another regularity or variational theorem for spectra of fixed
potentials, but a characterization of all admissible spectral shapes.

Such spectra obey strong constraints.  Under the standard
hypotheses used below, the rotation set is compact and convex, while the
entropy spectrum is nonnegative, upper semicontinuous, and concave; its
maximum is bounded above by the topological entropy of the system.  This
leads to a flexibility problem for the entire spectrum, rather than for an
individual numerical invariant.  To the best of our knowledge, despite the
extensive development of multifractal analysis, the following realization
problem has not previously been addressed:
\begin{quote}
\itshape
Subject to these general constraints, which functions arise as multifractal
entropy spectra of continuous vector-valued potentials within a prescribed class of
dynamical systems?
\end{quote}

We use the following natural target class.
\begin{definition}\label{def:target-class}
	Let $m\geq1$. Denote by $C_{ms}^{m}$ the set of all functions
	$h:K_h\to[0,\infty)$ such that:
	\begin{enumerate}
		\item $K_h\subset\mathbb R^m$ is a nonempty compact convex set;
		\item $h$ is upper semicontinuous and concave;
		\item $h$ attains a strictly positive maximum at a unique point of $K_h$.
	\end{enumerate}
	For $H>0$, define
	$$
	C_{ms}^{m,H}
	:=
	\{h\in C_{ms}^{m}:\max_{\alpha\in K_h}h(\alpha)=H\}.
	$$
\end{definition}

Throughout, we restrict attention to the case where $K_h$ is not a
singleton, since otherwise $h$ is simply a constant function on its
singleton domain and can be realized by a constant potential.

Proposition~\ref{prop:why-ms} shows that these restrictions are necessary.
Under the saturation and upper-semicontinuity hypotheses used below, every
rotation set is compact and convex, and every entropy spectrum is upper
semicontinuous and concave.  Moreover, when the measure of maximal entropy is
unique, the spectrum attains the topological entropy at the unique rotation
vector of that measure.

When $m=1$, every compact convex set is an interval, and every finite upper
semicontinuous concave function on such an interval is automatically
continuous.  This automatic continuity is specific to the scalar setting:
in dimension two, Wolf constructed a Lipschitz potential on a full shift
whose entropy spectrum is discontinuous at a boundary point of its
rotation set~\cite{Wolf2020}.  Thus upper semicontinuity is the natural
regularity condition in higher dimensions.

Our main results address the realization problem in four directions.  First,
on every nontrivial transitive two-sided SFT, the necessary constraints above
are sufficient in every finite dimension, and each admissible spectrum has
arbitrarily many realizations modulo cohomology.  Second, Legendre--Fenchel
duality converts this result into complete whole-space pressure flexibility
and resolves a question of Kucherenko and Quas.  Third, an entropy-path
construction extends scalar spectrum and pressure realization to a much
broader class of systems.  Finally, in the closed-graph Hausdorff metric, the
spectrum map is lower semicontinuous in every finite dimension, whereas in
the scalar SFT setting upper semicontinuity fails on a dense set.  We now
state these results precisely.

\subsection{Complete multiparameter spectrum realization on transitive SFTs}

Our main theorem shows that the preceding necessary conditions are also
sufficient on every nontrivial transitive two-sided SFT, in every finite
dimension.  Recall that two vector potentials $\Phi$ and $\Psi$ are
cohomologous if
$
\Psi-\Phi=u-u\circ\sigma+c
$
for some $u\in C(X,\mathbb R^m)$ and $c\in\mathbb R^m$.

\begin{theoremalph}\label{mp:thm:main-realization}
Let $m\geq1$, let $(X,\sigma)$ be a nontrivial transitive two-sided shift
of finite type, and put $H=\htop(\sigma,X)$. Then for every
$h\in C_{ms}^{m,H}$ and every $N\in\mathbb N$, there exist
$
\Phi_1,\dots,\Phi_N\in C(X,\mathbb R^m)
$
such that, for every $j=1,\dots,N$,
$$
R(\Phi_j;X,\sigma)=K_h,
\qquad
\htop(\sigma,L(\Phi_j,\alpha;X,\sigma))=h(\alpha)
\quad\text{for every }\alpha\in K_h,
$$
and such that $\Phi_i$ and $\Phi_j$ are not cohomologous whenever $i\ne j$.
\end{theoremalph}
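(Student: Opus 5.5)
The plan is to go through the conditional variational principle. For a transitive SFT, which has specification and expansivity, the level-set entropy is known to be
$$
\htop(\sigma,L(\Phi,\alpha))=\sup\Big\{h_\mu(\sigma):\mu\in M_\sigma(X),\ \int\Phi\,d\mu=\alpha\Big\},
$$
and $R(\Phi)=\{\int\Phi\,d\mu\}$ (Pfister--Sullivan, Takens--Verbitskiy). The realization problem therefore becomes: construct $\Phi$ whose induced map $\mu\mapsto\int\Phi\,d\mu$ pushes the entropy function forward exactly onto $h$. The admissibility conditions of Proposition~\ref{prop:why-ms} go in the other direction, so here we only need to build $\Phi$.

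\textbf{Step 1: the skeleton.} Let $\alpha_0$ be the unique maximizer of $h$, so $h(\alpha_0)=H$. Choose $\Phi$ of the form $\Phi=\alpha_0+\Psi$. Here $\Psi$ must satisfy $\int\Psi\,d\mu_{\max}=0$, where $\mu_{\max}$ is the Parry measure.

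\textbf{Step 2: the upper bound.} Write the upper semicontinuous concave $h$ as the infimum of countably many affine functions $a_k(\alpha)=c_k+\langle v_k,\alpha\rangle$ with $a_k\ge h$. Equivalently, use the Legendre dual $h^*(q)=\sup_\alpha\big(h(\alpha)+\langle q,\alpha\rangle\big)$. We need $\Phi$ whose pressure function $P(\langle q,\Phi\rangle)$ equals $h^*(q)$ for all $q\in\mathbb R^m$. The dual formula $E_\Phi=\inf_q\big(P(\langle q,\Phi\rangle)-\langle q,\alpha\rangle\big)$ then recovers $h$ on the interior. The boundary requires care, since $h$ is only upper semicontinuous.

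\textbf{Step 3: building $\Phi$ by horseshoes.} Realizing the pressure exactly is hard, so I would instead realize the spectrum directly. Choose a countable dense set of points $\alpha_j\in K_h$ and, for each, a subshift $Y_j\subset X$ with $\htop(Y_j)$ close to $h(\alpha_j)$ (possible by entropy density of subshifts of the SFT). Arrange the $Y_j$ on disjoint cylinder blocks, with entropies increasing toward $H$ only near $\alpha_0$. Define $\Phi$ so that it is nearly constant $\alpha_j$ on $Y_j$, and interpolate continuously on the transition regions. The most delicate part is the complementary part of $X$: the "bulk" carrying entropy $H$ must have mean $\alpha_0$, and every invariant measure with mean $\alpha$ must have entropy at most $h(\alpha)$. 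To force this, $\Phi$ would be defined via a coding of $X$ in which long words with small empirical entropy are assigned values far from $\alpha_0$. Concretely, I would set $\Phi(x)$ to a function of the local complexity near $x_0$, for example the number of distinct subwords in a window. This correlates the mean with entropy through a continuous profile $g$, roughly entropy $\approx H-g^{-1}(|\alpha-\alpha_0|)$, and the profile is tuned by concavity of $h$ along rays. Upper semicontinuity and concavity are exactly what make the ray-wise profile and the supremum over mixtures consistent: convex combinations of measures give convex combinations of means and entropies, so the achieved spectrum is automatically the concave hull, and we need the construction's hull to be $h$.

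\textbf{Step 4: non-cohomologous copies, and the main obstacle.} For the $N$ non-cohomologous copies, modify $\Phi$ by adding coboundary-inequivalent perturbations. For example, permute which periodic orbits of the same rotation vector but different periods are used, changing the averages along some periodic orbit. Any such change breaks cohomology by Livšic while preserving the spectrum. The main obstacle is Step 3: guaranteeing the exact identity $E_\Phi=h$, including on $\partial K_h$ where $h$ may jump. I would handle this by an approximation $h=\lim h_n$ with $h_n$ piecewise affine, and build $\Phi=\sum\Phi_n$ with uniformly summable corrections, so that the spectra converge appropriately at every point.
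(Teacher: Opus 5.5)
\paragraph{A genuine gap: the upper bound and exact attainment are never proved together.} Reducing to the conditional variational principle is the right start, and the paper does the same. What remains is to produce one $\Phi$ satisfying, simultaneously:
\begin{itemize}
\item[(a)] $h_\mu(\sigma)\le h\bigl(\int\Phi\,d\mu\bigr)$ for \emph{every} $\mu\in\mathcal M_\sigma(X)$;
\item[(b)] for \emph{every} $\alpha\in K_h$, an invariant $\mu_\alpha$ with $\int\Phi\,d\mu_\alpha=\alpha$ and $h_{\mu_\alpha}(\sigma)=h(\alpha)$.
\end{itemize}
Condition (a) alone is trivial: take $\Phi\equiv\alpha_0$. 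The whole difficulty is making (a) and (b) hold at once, and your Step~3 addresses neither.
\begin{itemize}
\item Horseshoes $Y_j$ at countably many $\alpha_j$, with entropy only close to $h(\alpha_j)$, do not by themselves give exact witnesses over every $\alpha$.
\item Nothing controls measures that shadow several $Y_j$ with freely chosen switching times. Such measures gain entropy from the switching, beyond any convex combination of the pieces.
\item Nothing controls the measures living in the ``bulk''.
\item The local-complexity example cannot work at all. A function of the subword count in a fixed window depends on finitely many coordinates. Hence its rotation set is a polytope and, by the H\"older thermodynamic formalism, its spectrum is real-analytic on the interior. This rules out, say, a disk for $K_h$ or an $h$ with an interior kink.
\item The limiting scheme $\Phi=\sum\Phi_n$ does not repair this as stated. ``The spectra converge appropriately'' does not follow from uniform convergence, because the spectrum map is not upper semicontinuous (Theorem~\ref{thm:not-continuous}). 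In that proof, the level-set entropy at the maximal mean equals $h_{\mu_1}(\sigma)$ for every $\varphi_t$ with $t>0$, yet is at least $h_{\mu_2}(\sigma)>h_{\mu_1}(\sigma)$ for the limit $\varphi$. To pass to the limit you would have to maintain a measure-level invariant such as (a) along the sequence, together with witnesses, which is the original problem again.
\end{itemize}

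\paragraph{The missing idea: build $\Phi$ on the simplex $\mathcal M_\sigma(X)$, not geometrically on $X$.} The paper proceeds as follows.
\begin{itemize}
\item It first builds a compact $C\subset\mathcal M^e_\sigma(X)$ and a continuous $G:C\to K_h$ with $h_\mu\le h(G(\mu))$ and an exact witness over every $\alpha$ (Proposition~\ref{mp:prop:compact-calibrated-pair}).
\item To do this, the hypograph of $h$ is cut into shells with entropy deficit between $2^{-n}H$ and $2^{1-n}H$. Each shell is embedded, with exactly prescribed entropies, into $\mathcal M^e_\sigma(\Lambda_n)$. The $\Lambda_n$ are transitive SFTs with pairwise disjoint measure sets, entropies tending to $H$, and measure sets shrinking to $m_{\max}$; the embedding uses Downarowicz--Serafin's Toeplitz entropy realization and Krieger's embedding theorem. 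This makes $C$ compact and $G$ continuous at $m_{\max}$.
\item $G$ then extends affinely to the Bauer face $\overline{\operatorname{conv}}(C)$.
\item Lazar's theorem (Lemma~\ref{thm:lazar}) extends it to a continuous affine selection of $T(\mu)=\{\alpha\in K_h:h_\mu(\sigma)\le h(\alpha)\}$ on all of $\mathcal M_\sigma(X)$. Lower semicontinuity of $T$ is exactly where upper semicontinuity of entropy and uniqueness of the maximizer enter.
\item Phelps' representation converts the selection into $\Phi$ (Theorem~\ref{thm:calibrated-map-realization}).
\end{itemize}

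\paragraph{Steps 2 and 4.} Your Step~4 needs the same device. Altering $\Phi$ near a periodic orbit also moves the means of nearby positive-entropy measures and can violate (a), so ``preserving the spectrum'' is not automatic. The paper instead prescribes different values only on finitely many extra zero-entropy ergodic measures outside $C$, and reruns the constrained extension. Step~2 is not a reduction either. By Fenchel--Moreau, whole-space pressure determines the upper semicontinuous concave spectrum everywhere, boundary included, so your boundary concern there is misplaced. Realizing $h^\ast$ is therefore equivalent to the theorem, and the paper deduces it from Theorem~\ref{mp:thm:main-realization} rather than the other way round.
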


Theorem~\ref{mp:thm:main-realization} shows that the formal restrictions in
Definition~\ref{def:target-class} are also sufficient.  Once the ambient
entropy $H=\htop(\sigma,X)$ is fixed, every nonnegative upper
semicontinuous concave function on a nonempty compact convex set, with
maximum $H$ attained at a unique point, occurs as the multifractal entropy
spectrum of a continuous vector-valued potential.  Thus the theorem gives
an exact characterization: the dimension and the compact convex rotation
set are arbitrary, boundary discontinuities are allowed, and every
admissible spectrum has arbitrarily many realizations modulo cohomology.

\subsection{Whole-space pressure realization}

One consequence of Theorem~\ref{mp:thm:main-realization} is a
whole-space pressure realization theorem.  For a continuous map $f:X\to X$
and a scalar potential $\psi\in C(X)$, we write $P_f(\psi)$ for the
topological pressure of $\psi$ with respect to $f$.  Kucherenko and
Quas~\cite{KQ2022} established the following pressure flexibility theorem on
a positive orthant.

\begin{theorem}\label{thm-QS}
	Let $m\geq1$ and $a>0$, and let
	$F:(a,\infty)^m\to\mathbb R$ be convex and Lipschitz. Assume that the
	vertical intercepts of all supporting hyperplanes to the graph of $F$
	lie in a closed interval $[b,c]\subset[0,\infty)$. Then there exist a
	full shift $(X,\sigma)$ on a finite alphabet and continuous potentials
	$\varphi_1,\dots,\varphi_m\in C(X)$ such that
	$$
	P_\sigma(t_1\varphi_1+\cdots+t_m\varphi_m)
	=
	F(t_1,\dots,t_m)
	$$
	for every $(t_1,\dots,t_m)\in(a,\infty)^m$.
\end{theorem}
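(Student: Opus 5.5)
Since the paper attributes this result to Kucherenko--Quas, I would derive it from Theorem~\ref{mp:thm:main-realization} by Legendre--Fenchel duality. The plan is to build a concave entropy spectrum $h$ on a compact convex set $K$ whose conjugate reproduces $F$ on the orthant, and then realize $h$ on a full shift.

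\emph{Step 1: the dual data.} Because $F$ is convex and Lipschitz on $(a,\infty)^m$, every subgradient $\alpha\in\partial F(t)$ lies in a bounded set. Let $K$ be the closed convex hull of $\bigcup_{t}\partial F(t)$; it is compact and convex. For a supporting hyperplane $s\mapsto \langle \alpha,s\rangle + b_\alpha$, the intercept is $b_\alpha = F(t)-\langle\alpha,t\rangle$. Define
$$
h(\alpha):=\inf_{t\in(a,\infty)^m}\bigl(F(t)-\langle\alpha,t\rangle\bigr)
$$
on $K$. This $h$ is concave and upper semicontinuous, being an infimum of affine functions, and it is finite on $K$. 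By the intercept hypothesis it takes values in $[b,c]$ on subgradients, and at least $b\ge0$ on their hull. Fenchel duality for convex functions on open sets then gives $F(t)=\max_{\alpha\in K}\bigl(h(\alpha)+\langle\alpha,t\rangle\bigr)$ on the orthant.

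\emph{Step 2: fitting $h$ into $C^{m,H}_{ms}$.} The function $h$ need not have a unique maximizer, and its maximum need not equal $\htop$ of any full shift. I would modify $h$ only on the part of $K$ that is ``invisible'' from the orthant. Since $t$ ranges over positive coordinates, the maximizing $\alpha$ in the formula for $F$ lie on the faces of $K$ seen by positive directions. I would therefore enlarge $K$ to $K'=\mathrm{conv}(K\cup\{\alpha_0\})$, with $\alpha_0$ placed far in the negative direction $-(1,\dots,1)$. Then choose an alphabet size $k$ with $\log k> c$, and set $h'(\alpha_0)=\log k$. On $K'$, let $h'$ be the upper semicontinuous concave hull of $h$ together with this point.

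I must check that the added values do not raise $\max_\alpha(h'(\alpha)+\langle\alpha,t\rangle)$ when $t_i>a$. Choosing $\alpha_0=-L(1,\dots,1)$ with $L$ large, we have $\log k - La\,m\le\inf F$-type bounds, so each new point contributes at most a convex combination that is dominated by existing values. This works because $\langle\alpha_0,t\rangle\le -Lam$ uniformly on the orthant, and $h'$ on a segment $[\alpha,\alpha_0]$ is at most linear interpolation. The maximizer of $h'$ is unique: it is $\alpha_0$, since $h'<\log k$ elsewhere by strict concavity along segments ending at $\alpha_0$, given $h\le c<\log k$. I expect this step to be the main obstacle: getting the domination estimate exactly right while also keeping the intercept lower bound $b\ge 0$ so that $h'\ge0$.

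\emph{Step 3: realization.} Apply Theorem~\ref{mp:thm:main-realization} on the full $k$-shift with $H=\log k$ to obtain $\Phi=(\varphi_1,\dots,\varphi_m)$ such that $R(\Phi)=K'$ and the entropy spectrum equals $h'$. The full shift is expansive with specification, so the standard variational identity holds:
$$
P_\sigma(\langle t,\Phi\rangle)=\max_{\alpha\in R(\Phi)}\bigl(E_\Phi(\alpha)+\langle t,\alpha\rangle\bigr).
$$
Both sides equal $\sup_\mu\bigl(h_\mu+\int\langle t,\Phi\rangle\,d\mu\bigr)$, and the spectrum equals the supremum of $h_\mu$ over measures with $\int\Phi\,d\mu=\alpha$. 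Combining this with Steps 1 and 2 yields $P_\sigma(\langle t,\Phi\rangle)=F(t)$ on $(a,\infty)^m$.
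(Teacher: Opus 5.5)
The paper gives no proof of this statement. It is quoted from Kucherenko--Quas as background, and the paper's own duality machinery (Proposition~\ref{prop:pressure-dual-reduction} together with Theorem~\ref{mp:thm:main-realization}) is used only for the whole-space Theorem~\ref{mp:thm:whole-space-pressure}. Your route is therefore genuinely different: you deduce the orthant theorem from Theorem~\ref{mp:thm:main-realization} by adjoining a point $\alpha_0$ that is invisible from $(a,\infty)^m$. The outline is sound. In effect you apply Theorem~\ref{mp:thm:whole-space-pressure} to the finite convex extension $\tilde F(t)=\max\{\max_{\alpha\in K}(h(\alpha)+\alpha\cdot t),\ \log k+\alpha_0\cdot t\}$, which is affine near $0$. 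This shows that Theorem~\ref{mp:thm:main-realization} implies the Kucherenko--Quas theorem, with arbitrarily many non-cohomologous realizations; that is the reverse of the implication the paper's remark rules out. Steps~1 and~3 are fine. In particular, $F(t)=\max_{\alpha\in K}(h(\alpha)+\alpha\cdot t)$ on the orthant holds because any $v\in\partial F(t)$ at an interior point satisfies $h(v)=F(t)-v\cdot t$.

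Step~2, however, contains a false claim and an incomplete estimate.

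First, $h\le c$ on $K$ is false. The function $h$ equals the intercept only at subgradients. At convex combinations of subgradients taken at different points, the infimum over the orthant need not be attained and can exceed $c$. Take $m=2$ and $F(t)=\max\{t_1,t_2,\tfrac45(t_1+t_2)\}$. All three pieces are active on $(a,\infty)^2$, and every supporting hyperplane passes through the origin, so $b=c=0$. Yet the centroid $(\tfrac35,\tfrac35)$ of $K=\operatorname{conv}\{(1,0),(0,1),(\tfrac45,\tfrac45)\}$ satisfies $h(\tfrac35,\tfrac35)\ge\inf_{t}\tfrac15(t_1+t_2)=\tfrac{2a}{5}>c$. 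If $\log k$ is chosen only above $c$, then $h'$ can exceed $\log k$ away from $\alpha_0$, and Theorem~\ref{mp:thm:main-realization} does not apply. The fix is to choose $\log k>\max_K h$. This maximum is finite because $h$ is upper semicontinuous on the compact set $K$ and $h(\alpha)\le F(t_0)-\alpha\cdot t_0$ for any fixed $t_0$. Uniqueness of the maximizer then comes from the two-point formula $h'(\beta)=\max\{\lambda\log k+(1-\lambda)h(\alpha):\beta=\lambda\alpha_0+(1-\lambda)\alpha,\ \alpha\in K,\ \lambda\in[0,1]\}$, since $\lambda<1$ whenever $\beta\ne\alpha_0$. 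It does not come from any strict concavity.

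Second, the domination you need is $\log k+\alpha_0\cdot t\le F(t)$ on the whole orthant. A uniform bound $\alpha_0\cdot t\le -Lam$ compared against $\inf F$ does not give this, because $F$ may be unbounded below; for example, $F(t)=-t_1$ satisfies all hypotheses with $b=c=0$. Use $h\ge b\ge0$ instead. For a fixed $\alpha\in K$ one has $F(t)\ge\alpha\cdot t\ge -M\sum_i t_i$ with $M=\|\alpha\|_\infty$. Hence the domination holds as soon as $L\ge M$ and $(L-M)ma\ge\log k$; choose $k$ first and then $L$.

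With these two repairs, the two-point formula gives $\max_{\beta\in K'}(h'(\beta)+\beta\cdot t)=\max\{F(t),\log k+\alpha_0\cdot t\}=F(t)$ for $t\in(a,\infty)^m$, and your Step~3 completes the proof.
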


They then asked whether the parameter domain can be enlarged to the whole
space while prescribing the full shift in advance.

\begin{question*}\cite[Question~1]{KQ2022}
	Let $m\geq1$, and let $F:\mathbb R^m\to\mathbb R$ be convex and
	Lipschitz. Assume that the vertical intercepts of its supporting
	hyperplanes lie in a bounded subinterval of $\mathbb R_{\geq0}$ and that
	$F(0)=\log d$ for some integer $d\geq2$. Do there exist continuous
	potentials $\varphi_1,\dots,\varphi_m\in C(\Sigma_d)$ on the full shift
	$\Sigma_d=\{1,\dots,d\}^{\mathbb Z}$ such that
	$$
	P_\sigma(t_1\varphi_1+\cdots+t_m\varphi_m)
	=
	F(t_1,\dots,t_m)
	$$
	for every $(t_1,\dots,t_m)\in\mathbb R^m$?
\end{question*}

For any pressure function realized on a transitive SFT, uniqueness of the
measure of maximal entropy forces differentiability at the origin; see
Section~\ref{sec:differentiability-at-origin}.  This necessary condition is
absent from the question above.  By applying
Theorem~\ref{mp:thm:main-realization} to the concave dual of $F$, we show that
it is the only missing condition.

\begin{theoremalph}\label{mp:thm:whole-space-pressure}
	Let $m\geq1$, let $(X,\sigma)$ be a nontrivial transitive two-sided shift
	of finite type, and put $H=\htop(\sigma,X)$. Let
	$F:\mathbb R^m\to\mathbb R$ be convex, and assume
	that:
	\begin{enumerate}
		\item $F(0)=H$;
		\item the vertical intercepts of all supporting hyperplanes of $F$ are
		nonnegative;
		\item $F$ is differentiable at $0$.
	\end{enumerate}
	Then for every $N\in\mathbb N$ there exist
	$\Phi_1,\dots,\Phi_N\in C(X,\mathbb R^m)$ such that for every $j=1,\dots,N$,
	$$
	P_\sigma(t\cdot\Phi_j)=F(t)
	\qquad\text{for every }t\in\mathbb R^m
	$$
    and such that $\Phi_i$ and $\Phi_j$ are not cohomologous whenever $i\ne j$.
\end{theoremalph}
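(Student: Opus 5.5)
We deduce Theorem~\ref{mp:thm:whole-space-pressure} from Theorem~\ref{mp:thm:main-realization} by Legendre--Fenchel duality. The plan is to define the concave dual of $F$, check that it lies in $C_{ms}^{m,H}$, realize it as an entropy spectrum, and then recover $F$ as the pressure function.

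\textbf{Step 1: the dual function and its domain.} Define
\[
h(\alpha):=\inf_{t\in\mathbb R^m}\bigl(F(t)-t\cdot\alpha\bigr),\qquad K_h:=\{\alpha\in\mathbb R^m: h(\alpha)\ge 0\}.
\]
A supporting hyperplane of $F$ with slope $\alpha\in\partial F(t_0)$ has vertical intercept $F(t_0)-t_0\cdot\alpha=h(\alpha)$. Hypothesis (2) therefore gives $h\ge0$ on $\bigcup_t\partial F(t)$. Since $h$ is an infimum of affine functions, it is concave and upper semicontinuous, hence $K_h$ is closed and convex. We also have $h(\alpha)\le F(0)=H$ for every $\alpha$.

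\textbf{Step 2: compactness of $K_h$.} This is the main obstacle. Because $F$ need not be Lipschitz, the natural domain $\{h>-\infty\}$ can be unbounded, so we must show that $\{h\ge0\}$ is bounded. Suppose $h(\alpha)\ge0$. Then $F(t)\ge t\cdot\alpha$ for all $t$. Take $t=s\alpha/|\alpha|$ with $s=1$: this gives $|\alpha|\le F(\alpha/|\alpha|)\le\max_{|u|=1}F(u)<\infty$, since a finite convex function is continuous. Hence $K_h$ is bounded, thus compact, and it is nonempty because it contains $\nabla F(0)$.

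We then set $h$ equal to its restriction to $K_h$. On $\{h<0\}$ the function is negative, but these $\alpha$ are exactly the ones to be excluded from the rotation set.

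\textbf{Step 3: unique maximizer.} The maximum of $h$ equals $\inf_t\sup_\alpha(F(t)-t\cdot\alpha)$, which requires care. A direct argument works: $h(\alpha)=H$ iff $F(t)\ge H+t\cdot\alpha$ for all $t$, iff $\alpha\in\partial F(0)$. By (3), $\partial F(0)=\{\nabla F(0)\}$, so the maximum $H$ is attained exactly at $\alpha_0=\nabla F(0)$. Thus $h\in C_{ms}^{m,H}$.

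If $K_h$ is a singleton, then $F$ is affine and a constant potential handles it. This case must be treated separately with $N$ non-cohomologous versions, or else absorbed into the Theorem, which we would check.

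\textbf{Step 4: recovering $F$ as pressure.} Apply Theorem~\ref{mp:thm:main-realization} to obtain pairwise non-cohomologous $\Phi_1,\dots,\Phi_N$ with $R(\Phi_j)=K_h$ and spectrum $h$. For a transitive SFT we use the variational principle together with the conditional variational principle for Birkhoff level sets, which holds by specification/saturation as in Proposition~\ref{prop:why-ms}:
\[
h(\alpha)=\sup\Bigl\{h_\mu:\int\Phi_j\,d\mu=\alpha\Bigr\}.
\]
This yields
\[
P_\sigma(t\cdot\Phi_j)=\sup_{\alpha\in K_h}\bigl(h(\alpha)+t\cdot\alpha\bigr).
\]
It remains to show that this supremum equals $F(t)$.

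By Fenchel--Moreau, $F=\sup_\alpha\bigl(h(\alpha)+t\cdot\alpha\bigr)$ with the supremum over all $\alpha$ where $h>-\infty$. We must show that the supremum may be restricted to $K_h$. For each $t$, take $\alpha\in\partial F(t)$, which is nonempty. Then $h(\alpha)+t\cdot\alpha=F(t)$, and $h(\alpha)\ge0$ by (2), so $\alpha\in K_h$. Hence the restricted supremum attains $F(t)$, and the unrestricted bound shows it is at most $F(t)$. Therefore $P_\sigma(t\cdot\Phi_j)=F(t)$ for all $t$, and non-cohomology is inherited from Theorem~\ref{mp:thm:main-realization}.
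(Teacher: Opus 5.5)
Your argument is correct and follows the same route as the paper: concave dual $h$, membership in $C_{ms}^{m,H}$ with unique maximizer $\nabla F(0)$ via $\partial F(0)=\{\nabla F(0)\}$, Theorem~\ref{mp:thm:main-realization}, then the conditional variational principle. The only differences are that you bound $\{h\ge0\}$ directly and recover $F(t)$ by attainment at a subgradient, where the paper uses a separation argument and Fenchel--Moreau. One side remark is inaccurate but harmless: under hypothesis (2) the set $\{h>-\infty\}$ cannot be unbounded, since the paper shows it equals $\overline{\operatorname{conv}}\bigl(\bigcup_s\partial F(s)\bigr)$; it therefore coincides with your $\{h\ge0\}$, and $F$ is automatically Lipschitz.
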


For the full $d$-shift, where $H=\log d$,
Theorem~\ref{mp:thm:whole-space-pressure} answers the question of Kucherenko
and Quas after adding the necessary differentiability condition.  The result
is stronger in two respects: it applies to every nontrivial transitive
two-sided SFT and provides arbitrarily many realizations modulo cohomology.

\begin{remark}
	It is natural to ask whether Theorem~\ref{mp:thm:main-realization} could
	be deduced from the pressure flexibility theorem of Kucherenko and Quas,
	Theorem~\ref{thm-QS}, by Legendre--Fenchel duality.  This is not the
	case.  Theorem~\ref{thm-QS} prescribes the pressure only on a positive
	orthant $(a,\infty)^m$ with $a>0$, leaving its behavior near the origin
	and in all other parameter directions undetermined.  The missing
	directions are essential for recovering the entropy spectrum on the
	whole rotation set.  Thus the theorem of Kucherenko and
	Quas does not imply the spectrum realization theorem.  Our argument
	proceeds in the reverse direction: we first realize the entropy spectrum
	and then apply the variational principle and convex duality to obtain the
	whole-space pressure realization in
	Theorem~\ref{mp:thm:whole-space-pressure}.
\end{remark}

\subsection{Scalar spectrum and pressure realization beyond SFTs}

The multiparameter construction uses symbolic structures specific to SFTs.
In the scalar case, however, a different construction based on
entropy monotone paths gives realization on a much broader class of systems.
The hypotheses ${\rm(H1)}$--${\rm(H5)}$ and their precise roles are stated
in Section~\ref{sec:scalar-realization}. Examples include transitive
sofic shifts, $\beta$-shifts, $S$-gap shifts, transitive Anosov
diffeomorphisms, and transitive expanding maps.

\begin{theoremalph}\label{thm:main}\label{thm:many-realizations}
	Let $(X,f)$ be a dynamical system satisfying ${\rm(H1)}$--${\rm(H4)}$, and put $H=\htop(f,X)$.
	Then for any $h\in C_{ms}^{1,H}$, there exists
	$\varphi\in C(X)$ such that
	$$
	R(\varphi;X,f)=K_h,
	\qquad
	\htop(f,L(\varphi,\alpha;X,f))=h(\alpha)
	\quad\text{for every }\alpha\in K_h.
	$$
	If, in addition, $(X,f)$ satisfies ${\rm(H5)}$, then for every
	$N\in\mathbb N$ there are $N$ pairwise non-cohomologous continuous
	potentials with the same rotation interval and entropy spectrum.
\end{theoremalph}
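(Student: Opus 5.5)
The plan is to realize $h$ through an \emph{entropy monotone path} of invariant measures and then to push this path onto the real line with a single continuous potential. Write $K_h=[a,b]$ with $a<b$, and let $\alpha^*$ be the unique maximum point, so $h(\alpha^*)=H$. Since $h$ is concave on $[a,\alpha^*]$ and on $[\alpha^*,b]$, it is nondecreasing on the first piece and nonincreasing on the second. Both pieces are continuous, because $m=1$.

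\textbf{Step 1 (entropy paths).} Using (H1)--(H4), we construct two families of ergodic (or at least invariant) measures, $\{\mu_s\}_{s\in[0,H]}$ and $\{\nu_s\}_{s\in[0,H]}$. They are to satisfy three conditions:
\begin{enumerate}
\item $h_{\mu_s}=s$ and $h_{\nu_s}=s$;
\item the supports $\operatorname{supp}\mu_s$ are nested and increasing in $s$, and likewise for $\nu_s$;
\item $\mu_H=\nu_H$ is the measure of maximal entropy.
\end{enumerate}
The two paths are to be ``separated'' away from $s=H$. A convenient form is increasing horseshoe-type subsystems $X_s\subset X$ with $\htop(X_s)=s$, built with the specification and entropy-density hypotheses.

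\textbf{Step 2 (build $\varphi$).} We want $\varphi$ to pull the rotation behaviour along the paths. Concretely, $\varphi$ should take values in $[a,\alpha^*]$ in a way that is monotone in the entropy level along the first path, and values in $[\alpha^*,b]$ along the second. The target is the identity
\[
\max\{h_\mu:\ \textstyle\int\varphi\,d\mu=\alpha\}=h(\alpha)\quad\text{for all }\alpha\in[a,b].
\]
The natural way to force this is to let $\varphi$ be, up to a small correction, a function of a ``distance to the nested subsystems'' level. For example, for $\alpha\le\alpha^*$ set $\varphi=g(s)$ on $X_s\setminus\bigcup_{s'<s}X_{s'}$, with $g=(h|_{[a,\alpha^*]})^{-1}$ suitably modified where $h$ is flat, and extend $\varphi$ continuously. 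This should give two inequalities:
\begin{itemize}
\item Upper bound: any measure with $\int\varphi\,d\mu=\alpha<\alpha^*$ must give substantial weight to low-level subsystems, which forces $h_\mu\le h(\alpha)$. Concavity of $h$ is exactly what is needed to control convex combinations.
\item Lower bound: the measure $\mu_{h(\alpha)}$, or a suitable convex combination, attains the value.
\end{itemize}

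\textbf{Step 3 (pass to level sets).} The saturation/conditional variational principle from the hypotheses gives $\htop(L(\varphi,\alpha))=\sup\{h_\mu:\int\varphi\,d\mu=\alpha\}$. The rotation set is the image of the invariant measures under $\mu\mapsto\int\varphi\,d\mu$, which by construction is $[a,b]$. Upper semicontinuity of entropy (H-hypothesis) ensures the supremum is attained and matches $h$ at the endpoints.

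\textbf{Step 4 (non-cohomologous copies under (H5)).} We produce $N$ versions $\varphi_1,\dots,\varphi_N$ by perturbing $\varphi$ on distinct disjoint periodic orbits, or by choosing different entropy paths, in a way that preserves the spectrum but changes a periodic-orbit average. Two potentials with different averages on some periodic orbit, after normalizing the constant $c$, cannot be cohomologous.

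The main obstacle is the upper bound in Step 2: a single continuous $\varphi$ must satisfy $h_\mu\le h(\int\varphi\,d\mu)$ for \emph{all} invariant $\mu$, not only those on the path. I would first try to make $\varphi\ge\alpha^*$ off a small set and then exploit the nested entropy control together with concavity of $h$ via Jensen's inequality on ergodic decompositions.
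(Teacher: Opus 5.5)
\textbf{There is a genuine gap.} It sits exactly at the step you call the main obstacle. Nothing in your construction enforces $h_\nu\le h(\int\varphi\,d\nu)$ for \emph{every} ergodic $\nu$. Jensen's inequality on ergodic decompositions only transfers such a bound from ergodic measures to their combinations, so it cannot supply the ergodic case itself. In the systems covered here, ergodic measures are typically dense in $\mathcal M_f(X)$, so control along or near your two paths says nothing about most of them.

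Your prescription $\varphi=g(s)$ on $X_s\setminus\bigcup_{s'<s}X_{s'}$ makes $\varphi$ essentially a function of the level $\ell(x)=\inf\{s:x\in X_s\}$. This function is only lower semicontinuous. Indeed, $\{\ell<s\}=\bigcup_n X_{s-1/n}$ is a countable union of closed invariant sets of entropy $<H$, which have empty interior in the transitive examples, so by Baire it is never open. Hence ``extend $\varphi$ continuously'' is not a technicality but the whole construction. Once $\varphi$ is modified near the layers you lose $\int\varphi\,d\mu_s=g(s)$. You also have no control over the integrals of ergodic measures not carried by a single layer, for instance high-entropy measures that spend part of their time near a low layer.

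There are two further problems. Step~1 relies on horseshoes, specification and entropy density, none of which is among (H1)--(H4), while (H2) and (H4) play no identifiable role. Step~4 asserts without argument that perturbing $\varphi$ on periodic orbits preserves the spectrum; this runs into the same global upper-bound problem, and (H5) does not even provide periodic orbits.

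\textbf{How the paper closes this.} The paper never writes $\varphi$ down explicitly. From (H2) and (H4) it builds a path of equilibrium states of $s\psi$, where $\psi$ is Lipschitz and has the two uniquely ergodic zero-entropy measures as its unique maximizing and minimizing measures. Entropy is continuous along this path and monotone on each branch, and the two branches meet only at $m_{\max}$; no nestedness or exact entropy parametrization is needed. On the compact set $C$ of path measures it sets $G(\mu)=h_\pm^{-1}(h_\mu(f))$, so $h_\mu(f)=h(G(\mu))$ on $C$ and $G(C)=K_h$.

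Then $\overline{\operatorname{conv}}(C)$ is a closed face of $\mathcal M_f(X)$ and a Bauer simplex, so $G$ extends affinely to it. Jensen gives $g(\eta)\in T(\eta):=\{\alpha\in K_h:h_\eta(f)\le h(\alpha)\}$ on this face. The set-valued map $T$ is affine with nonempty compact convex values. It is lower semicontinuous because of the unique maximizer of $h$ and the upper semicontinuity of entropy. Lazar's selection theorem therefore extends $g$ to a continuous affine selection $L$ of $T$ on all of $\mathcal M_f(X)$, and $L(\mu)=\int\varphi\,d\mu$ for some $\varphi\in C(X)$. The condition $L(\mu)\in T(\mu)$ is precisely the global upper bound you were missing.

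Multiplicity under (H5) uses the same mechanism. One adjoins finitely many zero-entropy ergodic measures outside $C$ with different prescribed values before extending. This keeps the spectrum and makes non-cohomology immediate.
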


Theorems~\ref{mp:thm:main-realization} and~\ref{thm:main} are complementary.
The former allows arbitrary finite dimension and arbitrary compact convex
rotation sets but uses symbolic structures specific to SFTs.  The latter is
scalar but applies to a substantially larger class of systems.  Both follow
from the same calibrated-map realization principle, using different
calibrated pairs.

The scalar theorem also yields pressure flexibility on the whole line.
\begin{corollaryalph}\label{cor:KQ-strengthened}
	Let $H>0$, let $F:\mathbb R\to\mathbb R$ be convex, satisfy
	$F(0)=H$, be differentiable at $0$, and have nonnegative vertical
	intercepts for all supporting lines.  If $(X,f)$ has
	$\htop(f,X)=H$ and satisfies ${\rm(H1)}$--${\rm(H4)}$, then there is
	$\varphi\in C(X)$ such that $P_f(t\varphi)=F(t)$ for every
	$t\in\mathbb R$.  Under ${\rm(H5)}$, there are arbitrarily many pairwise
	non-cohomologous such realizations.
\end{corollaryalph}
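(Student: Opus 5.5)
We will deduce Corollary~\ref{cor:KQ-strengthened} from Theorem~\ref{thm:main} by Legendre--Fenchel duality. Define the concave conjugate
$$
h(\alpha):=\inf_{t\in\mathbb R}\bigl(F(t)-t\alpha\bigr),\qquad K_h:=\{\alpha\in\mathbb R:h(\alpha)\ge 0\}.
$$
Since $F(t)-t\alpha\to$ a supporting-line intercept along any supporting line, the nonnegativity hypothesis on vertical intercepts is what makes $h\ge0$ exactly on the set of slopes of supporting lines. We also use the elementary fact $\inf_\alpha(F(t)-t\alpha)$-type duality: $h$ is an infimum of affine functions, hence concave and upper semicontinuous.

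\textbf{Step 1: $h\in C_{ms}^{1,H}$.} The set $K_h$ is a closed interval, and the subdifferentials of $F$ lie in it. It is bounded because $F(t)\ge 0\cdot t+$ (something) forces $F$ to have at most linear growth on each side with slopes $\lim_{t\to\pm\infty}F'(t)$. These limits must be finite: if, say, the right slope were infinite, the intercepts $F(t)-tF'(t)$ would become negative. Indeed, this follows from $F(0)=H$ and convexity, since $F(t)-tF'(t)\le F(0)$ and it tends to $-\infty$ unless the slope stays bounded.

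On $K_h$ we have $0\le h\le \inf_t$ at $t=0$, which gives $h\le F(0)=H$. The maximum $H$ is attained exactly at those $\alpha$ with $F(t)\ge H+t\alpha$ for all $t$, i.e.\ at $\alpha\in\partial F(0)$. Differentiability at $0$ makes this the single point $F'(0)$. Nonemptiness and nontriviality are automatic. (If $K_h$ is a singleton, $F$ is affine and a constant potential works.)

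\textbf{Step 2: realize $h$.} Theorem~\ref{thm:main} gives $\varphi$ with $R(\varphi)=K_h$ and $E_\varphi=h$.

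\textbf{Step 3: recover the pressure.} We need the conditional variational principle
$$
P_f(t\varphi)=\max_{\alpha\in K_h}\bigl(h(\alpha)+t\alpha\bigr).
$$
This follows from the variational principle
$$
P_f(t\varphi)=\sup_\mu\Bigl(h_\mu+t\int\varphi\,d\mu\Bigr),
$$
combined with the identity
$$
E_\varphi(\alpha)=\sup\Bigl\{h_\mu:\int\varphi\,d\mu=\alpha\Bigr\}.
$$
We expect the latter to be available under (H1)--(H4), via saturation/specification-type hypotheses, as used in Proposition~\ref{prop:why-ms}; upper semicontinuity of entropy gives the maximum. Then Fenchel--Moreau applied to the closed convex function $F$ gives $\max_\alpha(h(\alpha)+t\alpha)=F(t)$.

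One point needs care. The conjugate of $h$ restricted to $K_h$ equals the conjugate of the full concave conjugate, which takes negative values outside $K_h$. The values outside $K_h$ do not affect the maximum: for every $t$ the maximizer lies in $\partial F(t)\subset K_h$, where $h(\alpha)=F(t)-t\alpha$ is the intercept, which is $\ge0$.

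\textbf{Step 4: many realizations.} Under (H5), apply the second part of Theorem~\ref{thm:main}. This gives $N$ pairwise non-cohomologous $\varphi_j$ with the same spectrum, hence, by Step 3, the same pressure $F$.

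\textbf{Main obstacle.} We expect the main obstacle to be Step 3's conditional variational principle in the generality of (H1)--(H4). If it is not an explicit consequence of the hypotheses, we would instead cite the saturation property from Section~\ref{sec:scalar-realization}.
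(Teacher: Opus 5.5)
Your proposal is correct and follows essentially the paper's route. You dualize $F$ to the concave conjugate $h$ (the paper's Proposition~\ref{prop:pressure-dual-reduction}), realize $h$ by Theorem~\ref{thm:main}, and recover $P_f(t\varphi)=F(t)$ from the variational principle; your direct subgradient argument ($\partial F(t)\subset K_h$, with $h(\alpha)+t\alpha=F(t)$ there) does the job of Fenchel--Moreau. Two of your steps are easier than you made them: the ``main obstacle'' is not one, because (H3) is exactly the saturated property, so Proposition~\ref{prop:shifts-facts} gives both $R(\varphi;X,f)=\{\int\varphi\,d\mu:\mu\in\mathcal M_f(X)\}$ and the conditional variational principle; and boundedness of $K_h=\{h\ge0\}$ follows at once from taking $t=\pm1$, which gives $K_h\subset[-F(-1),F(1)]$ without any slope-limit argument.
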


\subsection{Stability properties of the spectrum map}
\label{subsec:intro-spectrum-stability}

The realization theorems describe the range of the spectrum map.  We also
study its local behavior under perturbations.  Since the domain of a spectrum
depends on the potential, we identify each spectrum with its closed graph.
For a function $h:K_h\to\mathbb R$, where $K_h\subset\mathbb R^m$ is
compact, set
$$
\Gamma_h:=\{(\alpha,h(\alpha)):\alpha\in K_h\},
\qquad
\widehat\Gamma_h:=\overline{\Gamma_h}\subset\mathbb R^{m+1}.
$$
The closure is relevant only in higher dimensions: scalar entropy spectra
are continuous on their rotation intervals, whereas boundary discontinuities
may occur when $m\geq2$.

For $\Phi\in C(X,\mathbb R^m)$, write $E_m(\Phi):=E_\Phi$ and define the
\emph{multifractal entropy spectrum map} by
$$
\mathscr E_m:C(X,\mathbb R^m)\longrightarrow
\mathcal K(\mathbb R^{m+1}),
\qquad
\mathscr E_m(\Phi):=\widehat\Gamma_{E_m(\Phi)},
$$
where $\mathcal K(\mathbb R^{m+1})$ denotes the collection of nonempty
compact subsets of $\mathbb R^{m+1}$.

For nonempty compact sets $A,B\subset\mathbb R^{m+1}$, let
$$
\operatorname{dist}(a,B):=\inf_{b\in B}|a-b|,
\qquad
e(A,B):=\sup_{a\in A}\operatorname{dist}(a,B).
$$
The Hausdorff distance is
$
d_H(A,B)=\max\{e(A,B),e(B,A)\},
$
and the corresponding closed-graph distance between spectra is
$
d_{ms}^{(m)}(h_1,h_2)
:=d_H(\widehat\Gamma_{h_1},\widehat\Gamma_{h_2}).
$
We call $\mathscr E_m$ lower semicontinuous at $\Phi$ if
$
e\bigl(\mathscr E_m(\Phi),\mathscr E_m(\Phi_n)\bigr)\longrightarrow0
$
whenever $\Phi_n\to\Phi$ uniformly.  Upper semicontinuity is defined by
reversing the two arguments of $e$, and continuity with respect to
$d_{ms}^{(m)}$ is equivalent to both one-sided conditions.

\begin{theoremalph}\label{thm:lower-continuous}
	Let $(X,f)$ be a dynamical system satisfying ${\rm(H1)}$ and ${\rm(H3)}$.
	Then, for every $m\geq1$, the map $\mathscr E_m$ is lower semicontinuous
	at every $\Phi\in C(X,\mathbb R^m)$.
\end{theoremalph}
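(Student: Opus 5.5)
We must show $e(\mathscr E_m(\Phi),\mathscr E_m(\Phi_n))\to0$. Every point of the closed graph of $E_\Phi$ should therefore be approximable by points of the graphs of $E_{\Phi_n}$. Since $e(\overline{A},B)=e(A,B)$, it suffices to treat points $(\alpha,E_\Phi(\alpha))$ with $\alpha\in R(\Phi)$.

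I expect (H1) and (H3) to supply the standard conditional variational principle: for saturated systems with upper semicontinuous entropy map,
$$
E_\Phi(\alpha)=\sup\Bigl\{h_\mu(f):\mu\in M_f(X),\ \int\Phi\,d\mu=\alpha\Bigr\},
\qquad
R(\Phi)=\Bigl\{\int\Phi\,d\mu:\mu\in M_f(X)\Bigr\}.
$$
The supremum is attained, by upper semicontinuity of $h_\mu$ and compactness of the constraint set. This is the input used in Proposition~\ref{prop:why-ms}.

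Fix $\alpha\in R(\Phi)$ and choose an invariant $\mu$ with $\int\Phi\,d\mu=\alpha$ and $h_\mu=E_\Phi(\alpha)$. Set $\alpha_n:=\int\Phi_n\,d\mu$. Then $\alpha_n\in R(\Phi_n)$ and $|\alpha_n-\alpha|\le\|\Phi_n-\Phi\|_\infty$. By the variational principle applied to $\Phi_n$, we get $E_{\Phi_n}(\alpha_n)\ge h_\mu=E_\Phi(\alpha)$.

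The difficulty is that the inequality can be strict, so we also need a nearby point whose height is close to $E_\Phi(\alpha)$ and not much above it. Here I would use concavity of $E_{\Phi_n}$ together with its minimum value being at least $0$. Consider the segment in $R(\Phi_n)$ from $\alpha_n$ to $\alpha_n$ itself; this is trivial, so that route does not work. Instead, use the intermediate value along a segment to a low point. I would try the following route.

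Take $\nu$ an invariant measure with $h_\nu$ minimal near... but entropy need not be small. A cleaner approach is to use the fact that heights between $E_\Phi(\alpha)$ and $E_{\Phi_n}(\alpha_n)$ are achieved by mixing. Let $\mu_n$ attain $E_{\Phi_n}(\alpha_n)$, and set $\mu_{n,s}:=(1-s)\mu+s\mu_n$. All of these have integral $\alpha_n$, so $E_{\Phi_n}(\alpha_n)$ does not move along this family. So we cannot lower the value at $\alpha_n$ itself; we must move the point.

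The right statement is therefore the following. Suppose $E_{\Phi_n}(\alpha_n)>E_\Phi(\alpha)+\eps$ along a subsequence. Pass to a limit $\mu_n\to\mu_\infty$. Upper semicontinuity gives $h_{\mu_\infty}\ge E_\Phi(\alpha)+\eps$, and $\int\Phi\,d\mu_\infty=\lim\alpha_n=\alpha$. This contradicts maximality of $\mu$.

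Hence $\limsup_n E_{\Phi_n}(\alpha_n)\le E_\Phi(\alpha)\le\liminf_n E_{\Phi_n}(\alpha_n)$, and so
$$
\operatorname{dist}\bigl((\alpha,E_\Phi(\alpha)),\mathscr E_m(\Phi_n)\bigr)\to0 .
$$

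To obtain the uniformity in $e$, argue by contradiction. Take points $\alpha^{(n)}$ with distance at least $\eps$, pass to a convergent subsequence of $(\alpha^{(n)},E_\Phi(\alpha^{(n)}))$ in the compact set $\widehat\Gamma_{E_\Phi}$, and combine with the pointwise statement. The limit point is in the closure rather than necessarily of the form $(\beta,E_\Phi(\beta))$. The map $\beta\mapsto(\beta,E_\Phi(\beta))$ is only upper semicontinuous, so I would handle this step with the triangle inequality. Fix one graph point $(\beta,E_\Phi(\beta))$ within $\eps/3$ of the limit. Then $(\alpha^{(n)},E_\Phi(\alpha^{(n)}))$ is eventually within $2\eps/3$ of it. Applying the pointwise convergence at $\beta$ gives distance less than $\eps$, which is a contradiction.

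The main obstacle is verifying that (H1), (H3) indeed give the conditional variational principle with attained suprema.
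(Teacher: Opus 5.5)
Your proposal is correct and follows essentially the same route as the paper. You choose an entropy-maximizing $\mu$ at $\alpha$ and set $\alpha_n=\int\Phi_n\,d\mu$; the conditional variational principle gives $E_{\Phi_n}(\alpha_n)\ge E_\Phi(\alpha)$, and a weak$^*$ limit of maximizers together with upper semicontinuity of entropy gives the reverse asymptotic inequality. The inputs you flag are exactly Proposition~\ref{prop:shifts-facts} together with (H1). Your sequential-compactness contradiction for uniformity is equivalent to the paper's finite $\varepsilon/2$-net of $\widehat\Gamma_{E_m(\Phi)}$ by graph points, and the abandoned detours in the middle (segments, mixing $\mu$ with $\mu_n$) can simply be deleted.
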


\begin{theoremalph}\label{thm:not-continuous}
	Let $(X,\sigma)$ be a nontrivial transitive two-sided shift of finite
	type.  There is a dense set $D\subset C(X)$ such that $\mathscr E_1$ is
	not upper semicontinuous, and hence not continuous, at any
	$\varphi\in D$.
\end{theoremalph}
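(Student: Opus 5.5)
We need to prove Theorem~\ref{thm:not-continuous}: there is a dense set $D\subset C(X)$ at which $\mathscr E_1$ fails to be upper semicontinuous. Concretely, for each $\varphi\in D$ we will exhibit $\varphi_n\to\varphi$ uniformly and a fixed $\delta>0$ with
$$
e\bigl(\mathscr E_1(\varphi_n),\mathscr E_1(\varphi)\bigr)\ge\delta
\quad\text{for all } n.
$$
So the graph of the spectrum of $\varphi_n$ must retain a point far from the graph of $\varphi$'s spectrum. The simplest mechanism is to enlarge the rotation interval. Let $\mu_{\max}$ be the measure of maximal entropy and $\alpha^*=\int\varphi\,d\mu_{\max}$. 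Then the spectrum of $\varphi$ takes the value $H$ at $\alpha^*$. I would arrange that $\varphi_n$ has spectrum taking a value near $H$ at a point far from $\alpha^*$. Uniform perturbations cannot move the rotation interval much, so the gain has to come from the entropy coordinate instead.

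\textbf{Defining $D$.} Take $D$ to be the set of $\varphi$ whose spectrum is not identically close to $H$ near its maximum; the cleanest choice is:
\begin{itemize}[leftmargin=*]
\item $\varphi$ is locally constant (or Hölder), so its spectrum is real-analytic and strictly concave on the interior of $R(\varphi)$;
\item $\varphi$ is not cohomologous to a constant, so $R(\varphi)=[a,b]$ is nondegenerate.
\end{itemize}
Such potentials are dense in $C(X)$. Indeed, locally constant functions are dense, and adding a tiny non-coboundary locally constant function destroys any cohomology to a constant. For such $\varphi$, pick $\beta\in(a,b)$ with $\beta\ne\alpha^*$ and $E_\varphi(\beta)<H-2\delta$.

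\textbf{Constructing $\varphi_n$.} Set $\varphi_n=\varphi+g_n$ with $\|g_n\|_\infty\le 1/n$ and $\int g_n\,d\mu_{\max}=\beta-\alpha^*$. This is impossible, however, since $|\beta-\alpha^*|$ is fixed. The fix is to exploit the fact that the spectrum of $\varphi_n$ at $\alpha^*+\int g_n\,d\mu_{\max}$ equals $H$, while nearby graph points of $\varphi$ have height about $E_\varphi(\alpha^*\pm\varepsilon)\approx H$. So a plain shift does not work; upper semicontinuity fails only if the new spectrum is high where the old one is low.

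The better mechanism is flattening. Choose $g_n$ as $(\beta-\varphi)$ restricted to a large sub-SFT $Y_n$ of entropy greater than $H-1/n$, on which $\varphi$ is nearly constant. Modifying $\varphi$ to be approximately $\beta$ there is only a small uniform change if $\varphi$ is already within $1/n$ of $\beta$ on $Y_n$. That requires a subshift of entropy close to $H$ with $\varphi$-values near $\beta$, which contradicts $E_\varphi(\beta)<H-2\delta$.

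The right choice is therefore $D$ consisting of potentials that are locally constant with values in a very fine grid, in such a way that the spectrum has $H$ only at $\alpha^*$ but there exist periodic-orbit-rich sets. I expect the key step, and the main obstacle, to be the following: show that an arbitrarily small uniform perturbation can create a plateau, i.e.\ a spectrum equal to nearly $H$ on an interval of length bounded below. I would attempt this with the calibrated-map technique behind Theorem~\ref{mp:thm:main-realization}, realizing $h_n$ close to $E_\varphi$ in the one-sided sense but with the plateau. I would then check that the realizing potential can be taken uniformly close to $\varphi$, which is the delicate point to verify.

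If that fails, fall back on the discontinuity of the rotation interval's endpoints. Take $\varphi$ whose maximum ergodic average $b$ is attained only by a measure of zero entropy. Then perturb $\varphi$ so that a positive-entropy horseshoe near the maximizing orbit attains the new maximum $b_n\to b$, giving $E_{\varphi_n}(b_n)\ge c>0$ while $E_\varphi(b)=0$. Near $b$, the values $E_\varphi$ are small by continuity, giving $\delta=c/2$. Density of such $\varphi$ follows from typical periodic maximization (Contreras).
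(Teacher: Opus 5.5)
Your proposal never arrives at a mechanism that works. The two you commit to are both impossible: the ``plateau'' in the main line, and in the fallback the jump from $E(\varphi)(b)=0$ to $E(\varphi_n)(b_n)\ge c>0$ at the top of the rotation interval. The obstruction is a one-sided bound coming straight from the conditional variational principle. Suppose $\|\varphi'-\varphi\|_\infty\le\varepsilon$ and $\alpha\in R(\varphi';X,\sigma)$, and take $\mu$ with $\int\varphi'\,d\mu=\alpha$ and $h_\mu(\sigma)=E(\varphi')(\alpha)$. Then $\beta:=\int\varphi\,d\mu\in R(\varphi;X,\sigma)$ satisfies $|\beta-\alpha|\le\varepsilon$, so
\[
E(\varphi')(\alpha)\le\max\bigl\{E(\varphi)(\beta):\beta\in R(\varphi;X,\sigma),\ |\beta-\alpha|\le\varepsilon\bigr\}.
\]
Scalar spectra are continuous, so whenever $\varphi_n\to\varphi$, every limit point $(\alpha,y)$ of points of $\Gamma_{E(\varphi_n)}$ satisfies $y\le E(\varphi)(\alpha)$. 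In other words, a small perturbation can never raise the spectrum by a fixed amount.

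This rules out the plateau, which would give $E(\varphi)(\alpha)=H$ for some $\alpha\ne\alpha^*$; you already found this for the flattening attempt. It also rules out the fallback. By continuity of $E(\varphi)$ at $b$, every measure of entropy at least $c$ has $\varphi$-average at most $b-\eta$ for some $\eta>0$, so no $1/n$-perturbation can make it maximizing.

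In fact, if $R(\varphi;X,\sigma)=[a,b]$ and $E(\varphi)(a)=E(\varphi)(b)=0$, then $\mathscr E_1$ \emph{is} upper semicontinuous at $\varphi$. Over interior points, concavity of $E(\varphi_n)$ together with the lower bound from the proof of Theorem~\ref{thm:lower-continuous} matches the bound above. New graph points accumulating over an endpoint are squeezed onto $(a,0)$ or $(b,0)$. Consequently neither of your candidate sets can serve as $D$. Your first candidate contains locally constant $\varphi$ with unique periodic maximizing and minimizing measures. Your fallback class contains every $\varphi$ whose minimizing measures also have zero entropy. Both kinds are points of upper semicontinuity.

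The missing idea is to go in the opposite direction. The perturbed graph must contain a point far \emph{below} the old one, and by the above this can only happen over an endpoint of the rotation interval at which $E(\varphi)$ is positive. The paper therefore takes $D$ to be the set of $\varphi$ for which $\mathcal M_{\max}(\varphi)$ contains ergodic measures $\mu_1,\mu_2$ with $h_{\mu_1}(\sigma)<h_{\mu_2}(\sigma)$. Density of this set is the substantial step, and it uses three ingredients:
\begin{enumerate}
\item Morris's density of potentials with a unique periodic maximizing measure $\mu_x$;
\item the entropy-monotone path from $\mu_x$ to $m_{\max}$ (Lemma~\ref{lem:periodic-measure-monotone-path-transitive-sft});
\item Shinoda's Bishop--Phelps perturbation (Lemma~\ref{lem:Shinoda2018}), which after an arbitrarily small change makes uncountably many measures on that path, with pairwise distinct entropies, simultaneously maximizing.
\end{enumerate}
Now let $b=\max_{\nu\in\mathcal M_\sigma(X)}\int\varphi\,d\nu$. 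Take $\psi$ with $\mathcal M_{\max}(\psi)=\{\mu_1\}$ (Jenkinson), normalized to have the same maximal average $b$. The potentials $\varphi_t=(1-t)\varphi+t\psi$ converge to $\varphi$ and satisfy $\mathcal M_{\max}(\varphi_t)=\{\mu_1\}$. Hence $(b,h_{\mu_1}(\sigma))\in\Gamma_{E(\varphi_t)}$ for every $t\in(0,1)$. Since $b$ is the right endpoint of $R(\varphi;X,\sigma)$ and $E(\varphi)(b)\ge h_{\mu_2}(\sigma)>h_{\mu_1}(\sigma)$, this point lies off the compact graph $\Gamma_{E(\varphi)}$, at a fixed positive distance.
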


\subsection*{Ideas of the proofs}

The conceptual core of the paper is a realization principle based on
calibrated maps.  The principle assumes a compact set
$C\subset\mathcal M_f^e(X)$ and a continuous map $G:C\to K_h$ satisfying
$$
h_\mu(f)\leq h(G(\mu))
$$
for every $\mu\in C$, together with an exact calibrated measure over each
$\alpha\in K_h$.  The face generated by $C$ is a Bauer simplex, on which
$G$ extends continuously and affinely.  Lazar's affine selection theorem
then extends this map to the full invariant measure simplex.  Representing
the resulting affine map by a vector potential and applying the conditional
variational principle produces the prescribed spectrum.

The two realization theorems differ in how the calibrated pair is built.  In
the multiparameter SFT setting, prescribed-entropy embeddings, pairwise
separated high entropy SFT layers, and a shell decomposition of the hypograph
of $h$ produce $C$ and $G$.  In the scalar setting, the same role is played by
an entropy monotone path joining two zero entropy measures through the measure
of maximal entropy.  Once the calibrated pair is available, the remainder of
the argument is common to both settings.

\subsection*{Structure of the paper}

Section~\ref{Section:Preliminaries} reviews the basic properties of
multifractal entropy spectra and the convex analytic tools.  Section~\ref{sec:calibrated-realization} establishes the
calibrated map realization principle underlying both constructions.
Section~\ref{sec:multiparameter-realization} implements this principle in the
multiparameter symbolic setting and proves
Theorem~\ref{mp:thm:main-realization}.  Section~\ref{sec:app} uses
Legendre--Fenchel duality to obtain whole-space pressure flexibility and
settle the Kucherenko--Quas problem.  Section~\ref{sec:scalar-realization}
develops the scalar entropy-path construction and extends the realization
results beyond SFTs.  Section~\ref{Section:discontinuity} analyzes the
stability of the spectrum map, proving lower semicontinuity in every finite
dimension and dense failure of upper semicontinuity in the scalar SFT
setting.  The appendix establishes the local SFT approximation required for
the multiparameter construction.

\section{Multifractal entropy spectra and convex analytic tools}\label{Section:Preliminaries}
\subsection{Basic properties of multifractal entropy spectra}
For a compact metric space $X$, denote by $\mathcal M(X)$ the set of Borel probability measures on $X$. If $f:X\to X$ is continuous, denote by $\mathcal M_f(X)$ and $\mathcal M_f^e(X)$ the sets of $f$-invariant and $f$-ergodic Borel probability measures, respectively. We always endow $\mathcal M(X)$ with the weak$^*$ topology. It follows from \cite{Walters1982} that $\mathcal M_f(X)$ is a nonempty compact convex set, and its extreme points are precisely $\mathcal M_f^e(X)$.

\begin{definition}
	For any $\mu\in \mathcal M_f(X)$, let $h_\mu(f)$ denote \emph{the metric entropy} of $\mu$ with respect to $f$; see \cite{Walters1982} for the definition.
	
	For any subset $Y\subset X$, let $\htop(f,Y)$ denote \emph{the topological entropy} of $Y$ in the sense of Bowen; see \cite{Bowen1973} for the definition. For convenience, we write $\htop(f)=\htop(f,X)$. 
	
	For any $\varphi\in C(X)$, let $P_f(\varphi)$ denote \emph{the topological pressure} of $\varphi$ with respect to $f$; see \cite{Bowen1973} for the definition. In particular,
	$
	P_f(0)=\htop(f).
	$
	
	For any $\mu\in\mathcal M_f(X)$, let $\xi_\mu$ denote its ergodic decomposition measure, namely the unique Borel probability measure on $\mathcal M_f^e(X)$ satisfying, for every $\varphi\in C(X)$,
	$$
	\int_X\varphi\,d\mu
	=
	\int_{\mathcal M_f^e(X)}
	\left(
	\int_X\varphi\,d\nu
	\right)
	\,d\xi_\mu(\nu).
	$$
	See \cite{Walters1982} for details. The ergodic decomposition formula for entropy is
	$$
	h_\mu(f)
	=
	\int_{\mathcal M_f^e(X)}h_\nu(f)\,d\xi_\mu(\nu).
	$$
	
	We say an invariant measure is uniquely ergodic if $\mathcal{M}_f(\operatorname{Supp}(\mu))=\{\mu\}$, where $\operatorname{Supp}(\mu)$ is the support of $\mu$.
\end{definition}

\begin{definition}
	For $\varphi\in C(X)$, define
	$$
	\mathcal M_{\max}(\varphi)
	:=
	\left\{
	\mu\in\mathcal M_f(X):
	\int\varphi\,d\mu
	=
	\max_{\nu\in\mathcal M_f(X)}\int\varphi\,d\nu
	\right\}.
	$$
	Its elements are called the \emph{maximizing measures} of $\varphi$.
\end{definition}

We use the following properties. See, for example, \cite{Pesin1997,Qiu2011}.
\begin{proposition}\label{prop:pressure}
	Let $(X,f)$ be a dynamical system. Then the following hold:
	\begin{enumerate}
		\item For every $\varphi\in C(X)$,
		$
		P_f(\varphi)
		=
		\sup_{\mu\in\mathcal M_f(X)}
		\left(
		h_\mu(f)+\int \varphi\,d\mu
		\right).
		$
		A measure $\nu\in\mathcal M_f(X)$ is called an equilibrium state for $\varphi$ if
		$
		P_f(\varphi)
		=
		h_\nu(f)+\int \varphi\,d\nu.
		$
		In the case $\varphi\equiv 0$, an equilibrium state is called a measure of maximal entropy. Equivalently, a measure $m\in\mathcal M_f(X)$ is a measure of maximal entropy if
		$
		h_m(f)
		=
		\sup_{\mu\in\mathcal M_f(X)}h_\mu(f)
		=
		\htop(f).
		$
		
		\item For any $\varphi_1,\varphi_2\in C(X)$,
		$
		|P_f(\varphi_1)-P_f(\varphi_2)|
		\leq
		\|\varphi_1-\varphi_2\|_\infty.
		$
		
		\item If the entropy map
		$
		\mu\longmapsto h_\mu(f)
		$
		is upper semicontinuous on $\mathcal M_f(X)$, and $\varphi$ has a unique equilibrium state $\nu_\varphi$, then for every $\psi\in C(X)$ one has
		$
		\left.\frac{d}{dt}P_{f}(\varphi+t\psi)\right|_{t=0}
		=
		\int \psi\,d\nu_\varphi.
		$
		In particular, if $(X,f)$ has a unique measure of maximal entropy $m_{\max}$, then for every $\psi\in C(X)$ one has
		$
		\left.\frac{d}{dt}P_{f}(t\psi)\right|_{t=0}
		=
		\int \psi\,dm_{\max}.
		$

		\item Suppose that the entropy map is upper semicontinuous on
		$\mathcal M_f(X)$, and let
		$
		\mathcal R_f
		:=
		\{\varphi\in C(X):\varphi\text{ has a unique equilibrium state}\}.
		$
		For $\varphi\in\mathcal R_f$, denote its unique equilibrium state by
		$\nu_\varphi$. Then the map
		$$
		\mathcal R_f\longrightarrow\mathcal M_f(X),
		\qquad
		\varphi\longmapsto\nu_\varphi,
		$$
		is continuous from the uniform topology to the weak$^*$ topology.
	\end{enumerate}
\end{proposition}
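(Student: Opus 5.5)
These four items are standard facts from thermodynamic formalism, and I plan to justify each by a short argument from the variational principle, citing \cite{Walters1982,Pesin1997,Qiu2011}.

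\emph{Item 1} is the variational principle for pressure, which I will quote. The statement about maximal entropy measures is the case $\varphi\equiv0$, combined with $P_f(0)=\htop(f)$.

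\emph{Item 2.} Monotonicity and additivity of constants give $P_f(\varphi_1)\le P_f(\varphi_2+\|\varphi_1-\varphi_2\|_\infty)=P_f(\varphi_2)+\|\varphi_1-\varphi_2\|_\infty$. Exchanging the roles of $\varphi_1$ and $\varphi_2$ gives the other inequality. This can also be read off directly from item 1, since each term $h_\mu+\int\varphi\,d\mu$ moves by at most $\|\varphi_1-\varphi_2\|_\infty$.

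\emph{Item 3} is the substantive part, and I would argue through subgradients. Put $p(t)=P_f(\varphi+t\psi)$. By item 1, $p$ is a supremum of affine functions of $t$, hence convex.

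For the one-sided derivatives, test the variational principle with $\nu_\varphi$. This gives
$$p(t)\ge h_{\nu_\varphi}+\int\varphi\,d\nu_\varphi+t\int\psi\,d\nu_\varphi=p(0)+t\int\psi\,d\nu_\varphi.$$
So $\int\psi\,d\nu_\varphi$ is a subgradient of $p$ at $0$, i.e. $p'_-(0)\le\int\psi\,d\nu_\varphi\le p'_+(0)$.

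For the reverse inequality on $p'_+(0)$, I proceed as follows.
\begin{enumerate}
\item Take $t_n\downarrow0$. Upper semicontinuity of entropy on the compact set $\mathcal M_f(X)$ guarantees an equilibrium state $\mu_n$ for $\varphi+t_n\psi$.
\item Testing the variational principle at parameter $0$ with $\mu_n$ gives $p(t_n)-p(0)\le t_n\int\psi\,d\mu_n$.
\item Pass to a weak$^*$ limit point $\mu$ of $(\mu_n)$. By item 2, $p(t_n)\to p(0)$. Upper semicontinuity of entropy then gives $h_\mu+\int\varphi\,d\mu\ge\limsup_n\bigl(h_{\mu_n}+\int(\varphi+t_n\psi)\,d\mu_n\bigr)=p(0)$.
\item Hence $\mu$ is an equilibrium state of $\varphi$, and uniqueness forces $\mu=\nu_\varphi$.
\end{enumerate}
Since every limit point is $\nu_\varphi$, the whole sequence converges to $\nu_\varphi$. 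Dividing the inequality in step 2 by $t_n$ yields $p'_+(0)\le\int\psi\,d\nu_\varphi$. The symmetric argument with $t_n\uparrow0$, where dividing by a negative number flips the inequality, gives $p'_-(0)\ge\int\psi\,d\nu_\varphi$. So $p$ is differentiable at $0$ with the claimed derivative. The maximal entropy case follows by taking $\varphi=0$.

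\emph{Item 4} uses the same compactness argument. Let $\varphi_n\to\varphi$ uniformly in $\mathcal R_f$, and take a weak$^*$ limit point $\mu$ of $(\nu_{\varphi_n})$. By item 2, $P_f(\varphi_n)\to P_f(\varphi)$. Moreover $\int\varphi_n\,d\nu_{\varphi_n}-\int\varphi\,d\nu_{\varphi_n}\to0$, and $\int\varphi\,d\nu_{\varphi_n}\to\int\varphi\,d\mu$ along the subsequence. Upper semicontinuity of entropy then gives $h_\mu+\int\varphi\,d\mu\ge P_f(\varphi)$, so $\mu=\nu_\varphi$ by uniqueness. Compactness of $\mathcal M_f(X)$ then yields $\nu_{\varphi_n}\to\nu_\varphi$.

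The only delicate point is the limit argument in items 3 and 4, which relies on upper semicontinuity and uniqueness.
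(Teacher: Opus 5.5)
Your proof is correct, but it is not how the paper handles this statement. The paper gives no argument here and simply cites \cite{Pesin1997,Qiu2011}. In the standard treatments (cf.\ \cite{Walters1982}), item~3 comes from the tangent-functional picture. When the entropy map is upper semicontinuous, the equilibrium states of $\varphi$ are exactly the invariant measures $\mu$ with $P_f(\varphi+\psi)-P_f(\varphi)\ge\int\psi\,d\mu$ for all $\psi\in C(X)$. A continuous convex function on $C(X)$ is G\^ateaux differentiable at $\varphi$ precisely when it has a unique tangent functional there.

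You avoid that convex-analytic machinery and work directly from the variational principle. The subgradient inequality at $\nu_\varphi$ bounds the one-sided derivatives from one side. The other side comes from testing the variational principle at parameter $0$ with arbitrary equilibrium states $\mu_n$ of $\varphi+t_n\psi$, using $\mu_n\to\nu_\varphi$ (upper semicontinuity plus uniqueness). This is the same sandwich argument the paper itself uses in Section~\ref{sec:differentiability-at-origin} for $t\mapsto P_f(t\cdot\Phi)$ at the origin. So your route would make the preliminaries self-contained at essentially no cost. It also needs only existence, not uniqueness, of equilibrium states for the perturbed potentials, and in item~4 only uniqueness at the limit potential.

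What the cited route gives in addition is the converse: if $P_f$ is differentiable at $\varphi$ in every direction $\psi$, the equilibrium state must be unique. Your one-directional argument does not yield this, but the paper never needs it.
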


\begin{definition}\label{def:saturated-property}
	For any $x \in X$ and $n\in\mathbb{N}$, define
	$
	\mathcal{E}_n(x):=\frac{1}{n} \sum_{k=0}^{n-1} \delta_{f^k (x)}
	$,
	where $\delta_y$ is the Dirac mass at $y \in X$. For
	$\mu\in\mathcal M_f(X)$, define its \emph{generic set} by
	$
	G_\mu:=\{x\in X:\lim\limits_{n\to\infty} \mathcal{E}_n(x)=\mu\}.
	$
	We say that a dynamical system $(X,f)$ has \emph{the saturated property
	for every invariant measure} if $G_\mu\neq\emptyset$ and
	$\htop(f,G_\mu)=h_\mu(f)$ for every $\mu\in\mathcal{M}_f(X)$. 
\end{definition}

\begin{proposition}\label{prop:shifts-facts}
	Let $m\geq1$. If $(X,f)$ has the saturated property for every
	invariant measure, then for every $\Phi\in C(X,\mathbb R^m)$,
	$$
	R(\Phi;X,f)
	=
	\left\{\int\Phi\,d\mu:\mu\in\mathcal M_f(X)\right\}
	$$
	is a nonempty compact convex subset of $\mathbb R^m$, and for every
	$\alpha\in R(\Phi;X,f)$,
	$$
	\htop(f,L(\Phi,\alpha;X,f))
	=
	\sup\left\{h_\mu(f):
	\mu\in\mathcal M_f(X),\ \int\Phi\,d\mu=\alpha
	\right\}.
	$$
	The latter identity is referred to as the \emph{conditional variational
	principle} for the level sets of $\Phi$.
\end{proposition}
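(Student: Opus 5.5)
The plan is to prove the two assertions separately, both by reducing to the space of invariant measures through the saturated property of Definition~\ref{def:saturated-property}.

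\textbf{Rotation set.} Set $I(\Phi):=\{\int\Phi\,d\mu:\mu\in\mathcal M_f(X)\}$. This set is nonempty, compact and convex, since it is the image of the nonempty compact convex set $\mathcal M_f(X)$ under the weak$^*$-continuous affine map $\mu\mapsto\int\Phi\,d\mu$.

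For the inclusion $R(\Phi;X,f)\subset I(\Phi)$, take $x\in L(\Phi,\alpha;X,f)$. Every weak$^*$ limit point $\mu$ of the empirical measures $\mathcal E_n(x)$ is $f$-invariant. Since $\int\Phi\,d\mathcal E_n(x)$ is exactly the Birkhoff average, $\int\Phi\,d\mu=\alpha$.

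For the reverse inclusion, take $\mu\in\mathcal M_f(X)$. The saturated property gives $G_\mu\neq\emptyset$, and any $x\in G_\mu$ satisfies $\frac1n\sum_{k<n}\Phi(f^kx)=\int\Phi\,d\mathcal E_n(x)\to\int\Phi\,d\mu$. Hence $\int\Phi\,d\mu\in R(\Phi;X,f)$.

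\textbf{Conditional variational principle.} Fix $\alpha\in R(\Phi;X,f)$ and write $S(\alpha)$ for the supremum on the right-hand side.

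The lower bound is immediate. For each $\mu$ with $\int\Phi\,d\mu=\alpha$, the argument above gives $G_\mu\subset L(\Phi,\alpha;X,f)$. Monotonicity of Bowen entropy and the saturated property then give $\htop(f,L(\Phi,\alpha;X,f))\ge\htop(f,G_\mu)=h_\mu(f)$. Taking the supremum over such $\mu$ yields the lower bound.

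The upper bound is the step I expect to be the main obstacle, and I would use Bowen's classical theorem. For $t\ge0$ put
$$
Q(t):=\Bigl\{x\in X:\ \exists\,\mu\in V(x)\ \text{with}\ h_\mu(f)\le t\Bigr\},
$$
where $V(x)$ is the set of weak$^*$ limit points of $\mathcal E_n(x)$. Bowen's theorem states that $\htop(f,Q(t))\le t$. Every $x\in L(\Phi,\alpha;X,f)$ has each $\mu\in V(x)$ invariant with $\int\Phi\,d\mu=\alpha$, so $h_\mu(f)\le S(\alpha)$. Therefore $L(\Phi,\alpha;X,f)\subset Q(S(\alpha))$, and the upper bound follows.

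To make the argument self-contained I would recall Bowen's covering argument. Given $\varepsilon>0$, cover the relevant set by Bowen balls at times $n$ where $\mathcal E_n(x)$ is close to a measure of entropy at most $t$. Bound the number of such orbit segments by $e^{n(t+\varepsilon)}$ using a finite partition with small boundary and the standard counting of $(n,\delta)$-separated points with prescribed empirical distribution. This counting is the only technical step. Since it is Bowen's theorem \cite{Bowen1973}, I would cite it rather than reprove it.

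Note that no upper semicontinuity of entropy is needed here, because the estimate uses only the limit points $V(x)$.
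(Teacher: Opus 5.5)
Your proposal is correct. The rotation-set part is the same as the paper's argument. You use limit points of $\mathcal E_n(x)$ for one inclusion, a generic point supplied by the saturated property for the other, and the image of $\mathcal M_f(X)$ under a continuous affine map for compactness and convexity.

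You differ from the paper on the conditional variational principle. The paper does not prove it; it quotes it from \cite[Proposition~7.1]{PfisterSullivan2007}. You instead build it from two explicit ingredients. For the lower bound you use the inclusion $G_\mu\subset L(\Phi,\alpha;X,f)$ together with $\htop(f,G_\mu)=h_\mu(f)$. For the upper bound you use Bowen's estimate $\htop(f,Q(t))\le t$ from \cite{Bowen1973}, with $t$ equal to the supremum. This works because every limit point of $\mathcal E_n(x)$, for $x\in L(\Phi,\alpha;X,f)$, is an invariant measure with $\int\Phi\,d\mu=\alpha$.

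The paper's route is shorter. Yours shows which hypothesis does what. The inequality $\htop(f,L(\Phi,\alpha;X,f))\le\sup\{h_\mu(f):\mu\in\mathcal M_f(X),\ \int\Phi\,d\mu=\alpha\}$ holds for every continuous map of a compact metric space, with no saturation and no upper semicontinuity of entropy. The saturated property is needed only for $G_\mu\neq\emptyset$, which gives the rotation set, and for the reverse inequality.
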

\begin{proof}
	Fix $\Phi\in C(X,\mathbb R^m)$.  If $\alpha\in R(\Phi;X,f)$, choose
	$x\in L(\Phi,\alpha;X,f)$.  Every weak$^*$ accumulation point $\mu$ of
	$(\mathcal E_n(x))_{n\geq1}$ belongs to $\mathcal M_f(X)$ and satisfies
	$
	\int\Phi\,d\mu=\alpha.
	$
	This proves $
	R(\Phi;X,f)
	\subset 
	\left\{\int\Phi\,d\mu:\mu\in\mathcal M_f(X)\right\}
	$.
	Conversely, if $\mu\in\mathcal M_f(X)$, the saturated property gives a point
	$x\in G_\mu$.  Hence
	$$
	\frac1n\sum_{k=0}^{n-1}\Phi(f^k(x))
	\longrightarrow
	\int\Phi\,d\mu,
	$$
	so $\int\Phi\,d\mu\in R(\Phi;X,f)$.  Therefore
	$$
	R(\Phi;X,f)
	=
	\left\{\int\Phi\,d\mu:\mu\in\mathcal M_f(X)\right\}.
	$$
	The right-hand side is a nonempty compact convex set because it is the image
	of the nonempty compact convex set $\mathcal M_f(X)$ under the continuous
	affine map $\mu\mapsto\int\Phi\,d\mu$.

	Finally, by \cite[Proposition~7.1]{PfisterSullivan2007}, for every
	$\alpha\in R(\Phi;X,f)$ one has
	$
	\htop(f,L(\Phi,\alpha;X,f))
	=
	\sup\left\{h_\mu(f):
	\mu\in\mathcal M_f(X),\ \int\Phi\,d\mu=\alpha
	\right\}.
	$
\end{proof}

The next proposition explains the choice of the target class $C_{ms}^{m}$:
every multifractal entropy spectrum is automatically upper
semicontinuous and concave on its rotation set, and uniqueness of the measure
of maximal entropy forces the maximum to be attained at exactly one point.
\begin{proposition}\label{prop:why-ms}
	Let $m\geq1$, and suppose that the entropy map is upper semicontinuous
	on $\mathcal M_f(X)$ and that $(X,f)$ has the saturated property for
	every invariant measure. Then for every $\Phi\in C(X,\mathbb R^m)$ the map
	$$
	E_\Phi(\alpha):=\htop(f,L(\Phi,\alpha;X,f)),
	\qquad\text{for every }\alpha\in R(\Phi;X,f),
	$$
	is upper semicontinuous and concave on $R(\Phi;X,f)$. If $(X,f)$ has a
	unique measure of maximal entropy $m_{\max}$, then $E_\Phi$ attains its
	maximum value $\htop(f)$ at the unique point
	$
	\alpha_\ast:=\int\Phi\,dm_{\max}.
	$
\end{proposition}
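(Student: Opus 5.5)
The plan is to work entirely through the conditional variational principle of Proposition~\ref{prop:shifts-facts}. That result identifies $R(\Phi;X,f)$ with the compact convex image of $\mathcal M_f(X)$ under $\mu\mapsto\int\Phi\,d\mu$. It also gives
$$
E_\Phi(\alpha)=\sup\Bigl\{h_\mu(f):\mu\in\mathcal M_f(X),\ \int\Phi\,d\mu=\alpha\Bigr\}.
$$
Each of the three claims then becomes a statement about this supremum over the compact fibers
$$
\mathcal M_\alpha:=\Bigl\{\mu\in\mathcal M_f(X):\int\Phi\,d\mu=\alpha\Bigr\}.
$$

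\textbf{Step 1: the supremum is a maximum.} Each fiber $\mathcal M_\alpha$ is a nonempty closed, hence compact, subset of $\mathcal M_f(X)$. Since the entropy map is upper semicontinuous, the supremum over $\mathcal M_\alpha$ is attained by some $\mu_\alpha$.

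\textbf{Step 2: concavity.} Take $\alpha,\beta\in R(\Phi;X,f)$ and $t\in[0,1]$. The measure $t\mu_\alpha+(1-t)\mu_\beta$ is invariant, and its $\Phi$-integral is $t\alpha+(1-t)\beta$. The entropy map is affine on $\mathcal M_f(X)$, which follows from the ergodic decomposition formula quoted above. Hence
$$
E_\Phi(t\alpha+(1-t)\beta)\ \geq\ t\,h_{\mu_\alpha}(f)+(1-t)\,h_{\mu_\beta}(f)\ =\ tE_\Phi(\alpha)+(1-t)E_\Phi(\beta).
$$

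\textbf{Step 3: upper semicontinuity.} Let $\alpha_n\to\alpha$ in $R(\Phi;X,f)$, and pass to a subsequence realizing $\limsup_n E_\Phi(\alpha_n)$. By compactness of $\mathcal M_f(X)$, pass to a further subsequence with $\mu_{\alpha_n}\to\mu$ weak$^*$. Continuity of $\mu\mapsto\int\Phi\,d\mu$ gives $\int\Phi\,d\mu=\alpha$, so $\mu\in\mathcal M_\alpha$. Upper semicontinuity of entropy then yields
$$
\limsup_n E_\Phi(\alpha_n)\ \leq\ h_\mu(f)\ \leq\ E_\Phi(\alpha).
$$
This step is the only place where a genuine limiting argument is needed. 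Its only subtle point is using the maximizers from Step~1, so that entropy upper semicontinuity transfers directly to the fibers.

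\textbf{Step 4: the maximum.} For every $\alpha$ we have $E_\Phi(\alpha)\leq\sup_\mu h_\mu(f)=\htop(f)$ by the variational principle in Proposition~\ref{prop:pressure}(1) with $\varphi=0$. At $\alpha_\ast=\int\Phi\,dm_{\max}$, equality holds because $m_{\max}\in\mathcal M_{\alpha_\ast}$. For uniqueness, suppose $E_\Phi(\alpha)=\htop(f)$. Then $\mu_\alpha$ from Step~1 has $h_{\mu_\alpha}(f)=\htop(f)$, so $\mu_\alpha$ is a measure of maximal entropy. By uniqueness $\mu_\alpha=m_{\max}$, and therefore $\alpha=\alpha_\ast$.

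In every part, the attainment of the supremum in Step~1 is what makes the argument work. Without it, uniqueness of the maximizing point in Step~4 would fail, since a supremum equal to $\htop(f)$ need not come from a measure of maximal entropy.
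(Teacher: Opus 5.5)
Your proposal is correct and follows essentially the same route as the paper. Both reduce everything to the conditional variational principle and use upper semicontinuity of entropy to obtain maximizers on the compact fibers; they then get upper semicontinuity by compactness, concavity by affinity of entropy, and uniqueness of the maximizing point from uniqueness of the measure of maximal entropy, with your explicit bound $E_\Phi\le\htop(f)$ via the variational principle being a small clarification the paper leaves implicit.
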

\begin{proof}
	By Proposition~\ref{prop:shifts-facts}, for every
	$\alpha\in R(\Phi;X,f)$,
	$
	E_\Phi(\alpha)
	=
	\max\left\{h_\mu(f):
	\mu\in\mathcal M_f(X),\ \int\Phi\,d\mu=\alpha\right\},
	$
	where attainment follows from the upper
	semicontinuity of the entropy map.

	To prove upper semicontinuity, let $\alpha_n\to\alpha$ in
	$R(\Phi;X,f)$. For each $n$, choose $\mu_n\in\mathcal M_f(X)$ such that
	$\int\Phi\,d\mu_n=\alpha_n$ and
	$h_{\mu_n}(f)=E_\Phi(\alpha_n)$. Passing first to a subsequence along which
	$E_\Phi(\alpha_n)$ converges to its limsup, and then to a further
	subsequence, compactness gives $\mu_n\to\mu\in\mathcal M_f(X)$.
	Since $\Phi$ is continuous, weak$^*$ convergence gives
	$
	\int\Phi\,d\mu
	=
	\lim_{n\to\infty}\int\Phi\,d\mu_n
	=
	\lim_{n\to\infty}\alpha_n
	=
	\alpha,
	$
	while
	upper semicontinuity of entropy gives
	$
	\limsup_{n\to\infty}E_\Phi(\alpha_n)
	=
	\limsup_{n\to\infty}h_{\mu_n}(f)
	\leq h_\mu(f)
	\leq E_\Phi(\alpha).
	$

	For concavity, let $\alpha_1,\alpha_2\in R(\Phi;X,f)$ and
	$\lambda\in(0,1)$. For each $i\in\{1,2\}$, choose
	$\mu_i\in\mathcal M_f(X)$ such that
	$\int\Phi\,d\mu_i=\alpha_i$ and
	$h_{\mu_i}(f)=E_\Phi(\alpha_i)$, and set
	$\mu=\lambda\mu_1+(1-\lambda)\mu_2$. Then
	$\int\Phi\,d\mu=\lambda\alpha_1+(1-\lambda)\alpha_2$, and affinity of
	entropy yields
	$$
	E_\Phi(\lambda\alpha_1+(1-\lambda)\alpha_2)
	\geq h_\mu(f)
	=\lambda E_\Phi(\alpha_1)+(1-\lambda)E_\Phi(\alpha_2).
	$$

	Assume now that $m_{\max}$ is the unique measure of maximal entropy, and
	put $\alpha_\ast=\int\Phi\,dm_{\max}$. The conditional variational
	principle gives
	$
	E_\Phi(\alpha_\ast)=h_{m_{\max}}(f)=\htop(f).
	$
	If $E_\Phi(\alpha)=\htop(f)$, choose $\mu\in\mathcal M_f(X)$ such that
	$\int\Phi\,d\mu=\alpha$ and $h_\mu(f)=E_\Phi(\alpha)$. Then $\mu$ is a
	measure of maximal entropy and hence equals $m_{\max}$. Therefore
	$\alpha=\alpha_\ast$, proving uniqueness.
\end{proof}

\subsection{Convex analytic tools}
Throughout this paper, all Choquet and Bauer simplices are assumed to be metrizable.

\begin{definition}\label{def:choquet-bauer-face}
Let $Q$ be a compact convex subset of a locally convex linear space.  A point $q\in Q$ is \emph{extreme} if it cannot be written as a nontrivial convex combination of two distinct points of $Q$; the set of extreme points is denoted by $\operatorname{ext}Q$.  The set $Q$ is a \emph{Choquet simplex} if every $q\in Q$ has a unique representing probability measure $\xi_q$ on $\operatorname{ext}Q$ with barycenter
$$
q=\int_{\operatorname{ext}Q}x\,d\xi_q(x).
$$
It is a \emph{Bauer simplex} if, in addition, $\operatorname{ext}Q$ is closed.  A convex subset $F\subset Q$ is a \emph{face} if
$$
0<t<1,\quad tq_1+(1-t)q_2\in F
\quad\Longrightarrow\quad
q_1,q_2\in F.
$$
\end{definition}

By the ergodic decomposition theorem recalled above,
$\mathcal M_f(X)$ is a Choquet simplex. Its extreme points
are $\mathcal M_f^e(X)$, and the representing measure of
$\mu\in\mathcal M_f(X)$ is precisely $\xi_\mu$.

We recall the following notions.
\begin{definition}
Let $K$ be a Choquet simplex and let $m\geq1$.
\begin{enumerate}
\item A set-valued map $T:K\to 2^{\mathbb R^m}$ is called \emph{affine} if
every $T(k)$ is a nonempty convex subset of $\mathbb R^m$ and
$
\lambda T(k_1)+(1-\lambda)T(k_2)
\subset
T\bigl(\lambda k_1+(1-\lambda)k_2\bigr)
$
for all $k_1,k_2\in K$ and $0<\lambda<1$.
\item The map $T:K\to 2^{\mathbb R^m}$ is called \emph{lower
semicontinuous} if, for every open set $U\subset\mathbb R^m$, the set
$
\{k\in K:T(k)\cap U\neq\varnothing\}
$
is open in $K$.
\item A map $g:K\to\mathbb R^m$ is called an \emph{affine continuous
selection} for $T$ if $g$ is affine and continuous and $g(k)\in T(k)$ for
every $k\in K$.
\end{enumerate}
\end{definition}

We shall use the following form of Lazar's affine selection theorem.
\begin{lemma}\cite[Theorem 3.1 and Corollary 3.4]{Lazar1968}\label{thm:lazar}
	Let $K$ be a Choquet simplex, let $F$ be a closed face of $K$, and let
	$
	T:K\to 2^{\R^m}
	$
	be an affine lower semicontinuous set-valued map with nonempty closed
	convex values. Suppose that $g_F:F\to\mathbb R^m$ is continuous and
	affine and satisfies $g_F(x)\in T(x)$ for every $x\in F$. Then there
	exists a continuous affine map $g:K\to\mathbb R^m$ such that
	$
	g|_F=g_F
	~\text{and}~
	g(x)\in T(x)~\text{for every }x\in K.
	$
\end{lemma}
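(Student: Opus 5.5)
The plan is a Michael-type selection argument in which partitions of unity are replaced by the interpolation properties of Choquet simplices. Since the statement is quoted from \cite{Lazar1968}, I only outline how the proof would go.

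\emph{Step 1 (the two simplex tools).} I will take for granted the standard facts of Choquet theory for metrizable simplices:
\begin{enumerate}
\item Edwards' separation theorem: if $u\le v$ on $K$, with $u$ convex and upper semicontinuous and $v$ concave and lower semicontinuous, then there is a continuous affine $a$ with $u\le a\le v$.
\item The extension property of closed faces: every continuous affine function on a closed face $F$ extends to a continuous affine function on $K$.
\end{enumerate}
The second fact, applied to the coordinates of $g_F$, gives a continuous affine $g_0:K\to\mathbb R^m$ with $g_0|_F=g_F$. The task is then to correct $g_0$ on $K\setminus F$ while keeping it fixed on $F$.

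\emph{Step 2 (approximate selections).} The key lemma to prove is the following. Given $\varepsilon>0$ and a continuous affine $g$ with $g=g_F$ on $F$ and $\operatorname{dist}(g(x),T(x))<\varepsilon$ for all $x\in K$, there is a continuous affine $g'$ with
\begin{itemize}
\item $g'=g_F$ on $F$,
\item $\|g'-g\|_\infty<2\varepsilon$,
\item $\operatorname{dist}(g'(x),T(x))<\varepsilon/2$ for all $x\in K$.
\end{itemize}
To get it, consider the set-valued map
\[
T_\varepsilon(x):=T(x)\cap B(g(x),\varepsilon),
\]
which is nonempty, convex, affine and lower semicontinuous, because intersecting an lsc map with an open ball around a continuous affine map preserves these properties. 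For each direction $e$ in a finite $\varepsilon/4$-net of the unit sphere, the support function $x\mapsto\sup_{y\in T_\varepsilon(x)}\langle e,y\rangle$ is concave and lsc. Its counterpart built from the infimum is convex and usc. Edwards' theorem then inserts continuous affine functions between them, and these control the candidate $g'$ in each direction.

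One then assembles an affine map from these scalar interpolants whose values stay within $\varepsilon/2$ of $T(x)$. I would do this through a compactness argument in $x$ combined with the finite net. Agreement on $F$ is enforced by working with $T_\varepsilon$ modified to equal $\{g_F(x)\}$ on $F$; this modification stays lsc and affine because $F$ is a closed face.

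\emph{Step 3 (iteration).} Starting from $g_0$ with $\varepsilon_0$ large enough (e.g.\ larger than $\sup_x\operatorname{dist}(g_0(x),T(x))$, which is finite by compactness and lower semicontinuity), apply Step 2 repeatedly with $\varepsilon_k=2^{-k}\varepsilon_0$. The resulting maps $g_k$ form a uniformly Cauchy sequence of continuous affine maps agreeing with $g_F$ on $F$. Their limit $g$ is continuous and affine, and it satisfies $g(x)\in\overline{T(x)}=T(x)$ because the values of $T$ are closed.

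\emph{Main obstacle.} The hard step is the vector-valued part of Step 2. Edwards' theorem is scalar, and coordinatewise interpolation does not keep the point inside the convex set $T_\varepsilon(x)$. I would first try to reduce to the scalar case by replacing $T(x)$ with polyhedral outer approximations cut out by finitely many directions, interpolating the corresponding support functions simultaneously. If that loses too much, the fallback is Lazar's original route, which uses the dual characterization of simplices through the Riesz decomposition property of $A(K)^*$.
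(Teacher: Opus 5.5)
The paper gives no proof of this lemma; it quotes it from Lazar. So the question is whether your outline stands on its own. Most of it does. The extension of $g_F$ to a continuous affine $g_0$, the lower semicontinuity and affinity of $T_\varepsilon$ and of its modification by $\{g_F(x)\}$ on the closed face $F$, the finiteness of $\sup_x\operatorname{dist}(g_0(x),T(x))$ (this function is upper semicontinuous), and the Cauchy iteration of Step~3 are all correct.

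The gap is Step~2, which is the entire content of Lazar's theorem for $m\ge2$. It is not proved, and the mechanism you sketch for it fails for two reasons.
\begin{enumerate}
\item Interpolating, direction by direction, a continuous affine $a_e$ between $\inf_{y\in T_\varepsilon(x)}\langle e,y\rangle$ and $\sup_{y\in T_\varepsilon(x)}\langle e,y\rangle$ produces functions chosen independently of one another. Nothing makes $(a_e(x))_e$ the values $\langle e,p\rangle$ of a single point $p$, let alone one close to $T(x)$.
\item A compactness argument in $x$ cannot glue local choices either. That would need a partition of unity subordinate to a finite open cover, and continuous affine functions cannot be localized: if $a\ge0$ is affine with $a(x)>0$, then $a(tx+(1-t)y)\ge t\,a(x)>0$ for every $y\in K$ and $t\in(0,1]$, so $\{a>0\}$ is dense in $K$.
\end{enumerate}
You flag this step yourself as the main obstacle, and neither remedy is carried out. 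The fallback to Lazar's Riesz-decomposition argument is a citation, not an argument.

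Your polyhedral idea can in fact be completed. The missing ingredient is Fourier--Motzkin elimination, which lets Edwards' theorem be applied one coordinate at a time.
\begin{enumerate}
\item Fix a $\delta$-net $e_1,\dots,e_N$ of the unit sphere. Put $\sigma_j(x)=\sup\{\langle e_j,y\rangle:y\in T_\varepsilon(x)\}$, with $T_\varepsilon$ already modified on $F$. These functions are bounded, concave and lower semicontinuous, and the system $\langle e_j,y\rangle\le\sigma_j(x)$, $1\le j\le N$, is feasible for every $x$.
\item Eliminating $y_m$ combines each pair of constraints whose $y_m$-coefficients have opposite signs into one constraint on $(y_1,\dots,y_{m-1})$. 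Its right-hand side is a nonnegative combination of the $\sigma_j$, hence still concave and lower semicontinuous. At each $x$ the new system describes exactly the projection of the old feasible set.
\item Eliminate down to $y_1$, then back-substitute. Once continuous affine $y_1,\dots,y_{k-1}$ solving the $(k-1)$-variable system are chosen, the remaining constraints of the $k$-variable system read $u(x)\le y_k\le v(x)$. Here $u$ is a maximum of convex upper semicontinuous functions and $v$ a minimum of concave lower semicontinuous ones; use a constant if one side is empty. The projection property gives $u\le v$, so Edwards' theorem provides $y_k$.
\item The resulting continuous affine $g'$ satisfies $g'(x)\in P(x):=\{y:\langle e_j,y\rangle\le\sigma_j(x)\ \forall j\}$. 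Because $T_\varepsilon(x)\subset B(g(x),\varepsilon)$, an elementary estimate gives, for $y\in P(x)$, $|y-g(x)|\le\varepsilon/(1-\delta^2/2)$ and $\operatorname{dist}(y,T_\varepsilon(x))\le3\delta\varepsilon$. Taking $\delta=1/8$ yields the bounds of your key lemma.
\item On $F$ one has $P(x)=\{g_F(x)\}$ as soon as $\delta<1$, so $g'=g_F$ on $F$ exactly.
\end{enumerate}
With this inserted, your scheme becomes a self-contained proof of the $\mathbb R^m$-valued lemma from Edwards' separation theorem alone.
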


\begin{lemma}\cite[Proposition 6]{Phelps2002}\label{lem:affine-representation}
	Let $(X,f)$ be a dynamical system. If $A:\mathcal M_f(X)\to\mathbb R$ is weak$^*$ continuous and affine, then there exists $\varphi\in C(X)$ such that
	$$
	A(\mu)=\int_X\varphi\,d\mu
	\qquad\text{for every }\mu\in\mathcal M_f(X).
	$$
	Consequently, every weak$^*$ continuous affine map $L=(L_1,\ldots,L_m):\mathcal M_f(X)\to\mathbb R^m$ is represented by a continuous vector-valued potential $\Phi=(\varphi_1,\ldots,\varphi_m)\in C(X,\mathbb R^m)$:
	$$
	L(\mu)=\int_X\Phi\,d\mu 	\qquad\text{for every }\mu\in\mathcal M_f(X).
	$$
\end{lemma}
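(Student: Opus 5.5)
The plan is to identify the space of weak$^*$ continuous affine functions on $\mathcal M_f(X)$ with a quotient of $C(X)$. Let $\mathcal A$ be the Banach space of real-valued weak$^*$ continuous affine functions on the compact convex set $K:=\mathcal M_f(X)$, with the sup norm. Define the restriction map
$$
R:C(X)\to\mathcal A,\qquad (R\varphi)(\mu):=\int\varphi\,d\mu .
$$
This map is linear and bounded. The scalar claim is exactly that $R$ is surjective. The vector claim then follows by applying the scalar claim to each coordinate $L_i$ and setting $\Phi=(\varphi_1,\dots,\varphi_m)$. I will prove surjectivity in two steps: the range of $R$ is dense, and the range of $R$ is closed.

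\emph{Density.} I would use the standard fact from Choquet theory (Phelps, Alfsen) that every continuous affine function on a compact convex subset $K$ of a locally convex space $E$ is a uniform limit on $K$ of functions of the form $c+\ell|_K$, where $c\in\mathbb R$ and $\ell\in E^*$. Here $E=\mathcal M(X)$, the space of signed measures, with the weak$^*$ topology. Its continuous linear functionals are exactly $\nu\mapsto\int\psi\,d\nu$ for $\psi\in C(X)$. Since every $\mu\in K$ is a probability measure, the constant $c$ equals $\int c\,d\mu$. Hence $c+\ell|_K=R(c+\psi)$, so the range of $R$ is dense in $\mathcal A$. If one wants a self-contained argument, this fact follows from Hahn--Banach: separate the point $(\mu_0,a(\mu_0)+\varepsilon)$ from the compact convex graph of $a$ in $E\times\mathbb R$, and use compactness to make the approximation uniform.

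\emph{Closed range.} Let $N:=\ker R=\{\varphi:\int\varphi\,d\mu=0\ \forall\mu\in K\}$. I claim that $R$ induces an isometry from $C(X)/N$ onto its image, that is,
$$
\inf_{n\in N}\|\varphi+n\|_\infty=\sup_{\mu\in K}\Bigl|\int\varphi\,d\mu\Bigr| .
$$
By Hahn--Banach, the left-hand side equals $\sup\{\nu(\varphi):\nu\in N^\perp,\ \|\nu\|\le1\}$. The annihilator $N^\perp$ is the weak$^*$ closed span of $K$, which is the space of $f$-invariant signed measures. The key point is that the Jordan decomposition of an invariant signed measure has invariant positive and negative parts, by uniqueness of the decomposition. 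So any such $\nu$ with $\|\nu\|\le1$ has the form $a\mu_1-b\mu_2$ with $\mu_i\in K$, $a,b\ge0$ and $a+b\le1$. This gives $\nu(\varphi)\le\sup_K|\int\varphi\,d\mu|$. The reverse inequality is trivial.

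Since $C(X)/N$ is a Banach space, its isometric image is complete and therefore closed in $\mathcal A$. Being also dense, the image equals $\mathcal A$, so $R$ is surjective.

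The main obstacle is the closed-range step, and within it the identification of $N^\perp$ together with the Jordan decomposition argument. That is the one place where invariance is genuinely used. The density step is classical and I would cite it.
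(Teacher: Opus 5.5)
The paper gives no argument of its own for this lemma; it imports it from Phelps. Your proposal is therefore a self-contained proof of the cited fact, and it is correct.

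The density step is the classical statement that continuous affine functions on a compact convex set $K\subset E$ are uniform limits of $c+\ell|_K$ with $\ell\in E^*$. Here $E^*=C(X)$ for the weak$^*$ topology on signed measures. The closed-range step correctly identifies the quotient norm of $\varphi+N$ with $\sup_{\mu\in K}|\int\varphi\,d\mu|$. It uses two facts: $N^\perp$ consists of invariant signed measures, and every such measure of norm at most one has the form $a\mu_1-b\mu_2$ with $\mu_i\in K$ and $a+b\le 1$. Citing buys the paper brevity. Your route buys transparency about where invariance enters, namely only through the Jordan decomposition.

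Two points should be tightened.

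First, for non-injective $f$ the measures $f_*\nu^+$ and $f_*\nu^-$ need not be mutually singular a priori. So uniqueness of the Jordan decomposition in the mutually-singular sense does not apply directly. Argue by minimality instead. Since $\nu=f_*\nu^+-f_*\nu^-$ with both terms positive, minimality forces $f_*\nu^+\ge\nu^+$. Both sides have the same total mass, so $f_*\nu^+=\nu^+$, and similarly for $\nu^-$.

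Second, the self-contained justification of density you sketch does not work as stated. Separating single points $(\mu_0,a(\mu_0)+\varepsilon)$ from the graph gives affine majorants that are good only near $\mu_0$. The minimum of finitely many of them is concave, not affine, so compactness does not finish the argument. The standard proof instead strictly separates the two disjoint compact convex sets $\{(\mu,a(\mu)):\mu\in K\}$ and $\{(\mu,a(\mu)+\varepsilon):\mu\in K\}$ in $E\times\mathbb R$. The separating functional necessarily has a nonzero $\mathbb R$-component. It gives in one step some $g=c+\ell|_K$ with $a<g<a+\varepsilon$ on $K$.
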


\section{Realization from calibrated maps}\label{sec:calibrated-realization}\label{Section:beta-shifts}
This section isolates the passage from a calibrated family of ergodic measures to a realizing vector-valued potential.  It is the common abstract mechanism used by both the multiparameter SFT construction and the scalar entropy-path construction.

\subsection{Compact faces generated by ergodic measures}
For a subset $A$ of a locally convex topological vector space, we write $\operatorname{conv}(A)$ for the convex hull of $A$, namely
$$
\operatorname{conv}(A)
=
\left\{
\sum_{i=1}^n t_i x_i
:
n\geq 1,\ x_i\in A,\ t_i\geq 0,\ \sum_{i=1}^n t_i=1
\right\},
$$
and we write $\overline{\operatorname{conv}}(A)$ for its closed convex hull.

\begin{proposition}\label{prop-compact-ergodic-face}
	Let $(X,f)$ be a dynamical system. Suppose that
	$
	E\subset \mathcal{M}_f^e(X)
	$
	is weak$^*$ compact, and set
	$$
	F_E:=\overline{\operatorname{conv}(E)}\subset \mathcal{M}_f(X).
	$$
	Then the following hold:
	\begin{enumerate}
		\item $\operatorname{ext}(F_E)=E$;
		\item $F_E$ is a closed face of $\mathcal{M}_f(X)$;
		\item $F_E$ is a Bauer simplex;
		\item every continuous function $\phi:E\to\mathbb{R}$ extends uniquely to a continuous affine function $\varphi$ on $F_E$. Moreover, $\varphi$ is defined by 
		$
		\varphi(\eta)=\int_E \phi(\mu)\,d\xi_\eta(\mu)
		$
		for every 
		$\eta\in F_E$, where $\xi_\eta$ is the unique representing probability measure of $\eta$.
	\end{enumerate}
\end{proposition}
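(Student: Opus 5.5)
The central object is the map from probability measures on $E$ to measures in $F_E$. Let $\mathcal P(E)$ be the set of Borel probability measures on the compact metrizable space $E$, and define the barycenter map
$$
b:\mathcal P(E)\to\mathcal M_f(X),\qquad b(\xi)=\int_E\mu\,d\xi(\mu),
$$
meaning $\int\varphi\,db(\xi)=\int_E\bigl(\int\varphi\,d\mu\bigr)d\xi(\mu)$ for $\varphi\in C(X)$. This map is affine and weak$^*$ continuous, since $\mu\mapsto\int\varphi\,d\mu$ is continuous on $E$. Its image is therefore compact and convex. It contains $\operatorname{conv}(E)$, the images of finitely supported measures, which are dense in $\mathcal P(E)$. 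Hence $b(\mathcal P(E))=F_E$.

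The key step is injectivity of $b$. Each $b(\xi)$ is an invariant measure whose ergodic decomposition measure is $\xi$, viewed as a measure on $\mathcal M_f^e(X)$ concentrated on $E$. By uniqueness of the ergodic decomposition (the Choquet simplex property of $\mathcal M_f(X)$), $\xi=\xi_{b(\xi)}$. So $b$ is a continuous bijection from a compact space onto a Hausdorff space, hence an affine homeomorphism $\mathcal P(E)\cong F_E$. Everything then reduces to standard facts about $\mathcal P(E)$.

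With this in hand, each item follows in turn.

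\emph{Item (1).} The extreme points of $\mathcal P(E)$ are the Dirac masses, so $\operatorname{ext}F_E=b(\{\delta_\mu\})=E$.

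\emph{Item (3).} $\mathcal P(E)$ is a Bauer simplex, with closed extreme boundary $\{\delta_\mu\}\cong E$ and unique representing measures, and an affine homeomorphism preserves this structure.

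\emph{Item (2).} Suppose $\eta=t\eta_1+(1-t)\eta_2\in F_E$ with $\eta_i\in\mathcal M_f(X)$ and $0<t<1$. By uniqueness and affinity of the ergodic decomposition, $\xi_\eta=t\xi_{\eta_1}+(1-t)\xi_{\eta_2}$. Since $\xi_\eta=b^{-1}(\eta)$ is supported on $E$, we get $\xi_{\eta_i}(\mathcal M_f^e(X)\setminus E)=0$. Thus $\xi_{\eta_i}\in\mathcal P(E)$ and $\eta_i=b(\xi_{\eta_i})\in F_E$, so $F_E$ is a face. It is closed by definition.

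\emph{Item (4).} Set $\varphi(\eta)=\int_E\phi\,d\xi_\eta=\int_E\phi\,d(b^{-1}\eta)$. This is affine and continuous because $b^{-1}$ is affine and continuous and $\xi\mapsto\int\phi\,d\xi$ is weak$^*$ continuous, and it extends $\phi$ because $\xi_\mu=\delta_\mu$ for $\mu\in E$. For uniqueness, any continuous affine extension agrees with $\phi$ on $\operatorname{conv}(E)$, which is dense in $F_E$.

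I expect the main obstacle to be making the identification $\xi_{b(\xi)}=\xi$ rigorous, in particular the measurability needed to regard $\xi$ as a representing measure on $\operatorname{ext}\mathcal M_f(X)$. This works because $E$ is compact, hence Borel, and the extreme boundary $\mathcal M_f^e(X)$ is a $G_\delta$ set in the metrizable setting. Uniqueness of the representing measure in the Choquet simplex $\mathcal M_f(X)$ is exactly what is needed here. I would handle this step first, and if it proves delicate, fall back on the Choquet--Meyer uniqueness theorem directly.
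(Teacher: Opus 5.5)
Your proposal is correct, but it is organized differently from the paper. You first prove one structural lemma: the barycenter map $b:\mathcal P(E)\to F_E$ is an affine homeomorphism. Surjectivity comes from compactness and density of finitely supported measures, and injectivity from uniqueness of the ergodic decomposition. All four items are then read off from standard facts about $\mathcal P(E)$.

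The paper instead treats the items separately, citing general Choquet theory:
\begin{itemize}
\item item (1) via Milman's converse to Krein--Milman;
\item item (2) by approximating $\mu\in F_E$ from $\operatorname{conv}(E)$, extracting a weak$^*$ limit $\lambda\in\mathcal P(E)$, and invoking uniqueness of the ergodic decomposition;
\item item (3) via the fact that a closed face of a Choquet simplex is a Choquet simplex;
\item item (4) via Alfsen's characterization of Bauer simplices.
\end{itemize}
The paper's face argument in fact contains both of your ingredients. Its $\lambda_n\to\lambda$ step is surjectivity of $b$, and its final approximation of $\xi_\nu$ by finitely supported measures is the inclusion $b(\mathcal P(E))\subset F_E$. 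It just never packages them into one lemma.

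What your route buys: it is self-contained and gives the stronger statement $F_E\cong\mathcal P(E)$. That turns the extension formula in item (4), and its uniqueness, into a direct check rather than a citation. What the paper's route buys: it is more modular, at the cost of three external references.

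The obstacle you flag is milder than you expect. You only need two things:
\begin{itemize}
\item the inclusion $E\hookrightarrow\mathcal M_f^e(X)$ is continuous, so each $\xi\in\mathcal P(E)$ pushes forward to a Borel probability on $\mathcal M_f^e(X)$ with barycenter $b(\xi)$;
\item $E$ is compact, hence closed in $\mathcal M_f^e(X)$, which justifies restricting $\xi_{\eta_i}$ to $E$ in item (2).
\end{itemize}
The $G_\delta$ property of $\mathcal M_f^e(X)$ plays no role.
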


\begin{proof}
	Since 
	$
	\operatorname{ext}\bigl(\mathcal{M}_f(X)\bigr)=\mathcal{M}_f^e(X),
	$
	we have
	$
	E\subset \operatorname{ext}\bigl(\mathcal{M}_f(X)\bigr).
	$
	
	First, we show that
	$
	\operatorname{ext}(F_E)=E.
	$
	To prove $E\subset \operatorname{ext}(F_E)$, let $\mu\in E$. Suppose that
	$
	\mu=t\nu+(1-t)\omega
	$
	for some $\nu,\omega\in F_E$ and $t\in(0,1)$. Since $F_E\subset \mathcal{M}_f(X)$, this is also a convex decomposition of $\mu$ inside $\mathcal{M}_f(X)$. Because $\mu\in \mathcal{M}_f^e(X)=\operatorname{ext}(\mathcal{M}_f(X))$, we must have
	$
	\nu=\omega=\mu.
	$
	Hence $\mu\in \operatorname{ext}(F_E)$, and therefore
	$
	E\subset \operatorname{ext}(F_E).
	$
	For the reverse inclusion, since $E$ is compact and
	$
	F_E=\overline{\operatorname{conv}(E)},
	$
	Milman's converse to the Krein--Milman theorem implies that
	$
	\operatorname{ext}(F_E)\subset E;
	$
	see \cite[Proposition 1.5]{Phelps2001}. Consequently,
	$
	\operatorname{ext}(F_E)=E.
	$
	
	Next, we show that $F_E$ is a face of $\mathcal{M}_f(X)$. Let $\mu\in F_E$ and suppose that
	$
	\mu=t\nu+(1-t)\omega
	$
	for some $\nu,\omega\in \mathcal{M}_f(X)$ and $t\in(0,1)$. We claim that the ergodic decomposition of $\mu$ is supported on $E$. Indeed, choose a sequence $(\mu_n)_n$ in $\operatorname{conv}(E)$ such that $\mu_n\to \mu$ in the weak$^*$ topology. For each $n$, since $\mu_n\in\operatorname{conv}(E)$, there exist $k_n\geq 1$, measures $\nu_{n,1},\ldots,\nu_{n,k_n}\in E$, and coefficients $a_{n,1},\ldots,a_{n,k_n}\geq 0$ with
	$
	\sum_{j=1}^{k_n}a_{n,j}=1
	$
	such that
	$
	\mu_n=\sum_{j=1}^{k_n}a_{n,j}\nu_{n,j}.
	$
	Let $\mathcal P(E)$ denote the space of Borel probability measures on $E$. Define $\lambda_n\in\mathcal P(E)$ by
	$$
	\lambda_n=\sum_{j=1}^{k_n}a_{n,j}\delta_{\nu_{n,j}}.
	$$
	Since $E$ is compact, after passing to a subsequence we may assume that $\lambda_n\to \lambda\in \mathcal P(E)$ in the weak$^*$ topology. Then, for every $\varphi\in C(X)$,
	$$
	\int_X \varphi\, d\mu
	=
	\lim_{n\to\infty}\int_X \varphi\, d\mu_n
	=
	\lim_{n\to\infty}
	\sum_{j=1}^{k_n}a_{n,j}
	\int_X \varphi\, d\nu_{n,j}
	=
	\int_E
	\left(
	\int_X \varphi\, d\eta
	\right)
	\, d\lambda(\eta).
	$$
	Thus $\lambda$ is a representing measure for $\mu$ supported on $E$. By uniqueness of ergodic decomposition, the ergodic decomposition of $\mu$ is exactly $\lambda$, and hence is supported on $E$. Let $\xi_\mu,\xi_\nu$, and $\xi_\omega$
	denote the ergodic decomposition measures of $\mu,\nu$, and $\omega$, respectively. By uniqueness
	of ergodic decomposition, we have
	$
	\xi_\mu=t\xi_\nu+(1-t)\xi_\omega.
	$
	Since $\xi_\mu(E)=1$, it follows that
	$
	\xi_\nu(E)=\xi_\omega(E)=1.
	$
	Hence the ergodic decomposition measures $\xi_\nu$ and $\xi_\omega$ are supported on $E$. Therefore, for every $\varphi\in C(X)$,
	$$
	\int_X\varphi\,d\nu
	=
	\int_E
	\left(
	\int_X\varphi\,d\eta
	\right)
	\,d\xi_\nu(\eta),
	\qquad
	\int_X\varphi\,d\omega
	=
	\int_E
	\left(
	\int_X\varphi\,d\eta
	\right)
	\,d\xi_\omega(\eta).
	$$
	Since $E$ is compact, every probability measure on $E$ is a weak$^*$ limit of finitely supported probability measures. Hence there exist
	$$
	\xi_{\nu,n}
	=
	\sum_{j=1}^{k_n}a_{n,j}\delta_{\eta_{n,j}},
	\qquad
	\eta_{n,j}\in E,\quad
	a_{n,j}\geq 0,\quad
	\sum_{j=1}^{k_n}a_{n,j}=1,
	$$
	such that $\xi_{\nu,n}\to \xi_\nu$ in the weak$^*$ topology on $\mathcal P(E)$. Define
	$$
	\nu_n=\sum_{j=1}^{k_n}a_{n,j}\eta_{n,j}.
	$$
	Then $\nu_n\in\operatorname{conv}(E)$, and for every $\varphi\in C(X)$,
	$$
	\int_X\varphi\,d\nu_n
	=
	\int_E
	\left(
	\int_X\varphi\,d\eta
	\right)
	\,d\xi_{\nu,n}(\eta)
	\to
	\int_E
	\left(
	\int_X\varphi\,d\eta
	\right)
	\,d\xi_\nu(\eta)
	=
	\int_X\varphi\,d\nu.
	$$
	Thus $\nu_n\to\nu$ in the weak$^*$ topology, and hence $\nu\in\overline{\operatorname{conv}}(E)$. The same argument applied to $\xi_\omega$ gives $\omega\in\overline{\operatorname{conv}}(E)$. Hence
	$
	\nu,\omega\in \overline{\operatorname{conv}}(E)=F_E.
	$
	Thus $F_E$ is a face of $\mathcal{M}_f(X)$.
	
	Since $\mathcal M_f(X)$ is a Choquet simplex and $F_E$ is a closed face of $\mathcal M_f(X)$, it follows that $F_E$ is again a Choquet simplex; see, for example, \cite[Proposition 10.9]{Goodearl2010}. Moreover,
	$
	\operatorname{ext}(F_E)=E.
	$
	Since $\operatorname{ext}(F_E)=E$ is compact, $F_E$ is a Bauer simplex.
	
	Finally, since $F_E$ is a Bauer simplex and
	$
	\operatorname{ext}(F_E)=E,
	$
	the standard characterization of Bauer simplices implies that every continuous function on $E$ extends uniquely to a continuous affine function on $F_E$; see, for example, \cite[Theorem II. 4.3]{Alfsen1971}. More precisely, for every continuous function $\phi:E\to\mathbb R$, the unique continuous affine extension is given by
	$$
	\varphi(\eta)
	=
	\int_E \phi(\mu)\,d\xi_\eta(\mu),
	\qquad\text{for every }\eta\in F_E,
	$$
	where $\xi_\eta$ is the unique representing probability measure of $\eta$ supported on $E$.
\end{proof}

\subsection{Calibrated pairs and the abstract realization theorem}\label{subsec:common-calibrated-map}

We isolate the part of the proof shared by the scalar and multiparameter
settings.  It is formulated for a compact set of ergodic measures together
with a continuous parameter map calibrated by the target entropy function.

For a dynamical system $(X,f)$, two vector-valued potentials
$\Phi,\Psi\in C(X,\mathbb R^m)$ are called \emph{cohomologous} if there exist
$u\in C(X,\mathbb R^m)$ and $c\in\mathbb R^m$ such that
$
\Psi-\Phi=u-u\circ f+c.
$

\begin{theorem}\label{thm:calibrated-map-realization}
Let $m\geq1$, and let $(X,f)$ be a dynamical system with positive topological
entropy satisfying the following two conditions: the entropy map is upper
semicontinuous on $\mathcal M_f(X)$, and $(X,f)$ has the saturated property
for every invariant measure.  Set $H:=\htop(f)$, and let
$h\in C_{ms}^{m,H}$. Assume that there are a compact set
$C\subset\mathcal M_f^e(X)$ and a continuous map $G:C\to K_h$ such that:
\begin{enumerate}[label=(\roman*)]
\item $h_\mu(f)\leq h(G(\mu))$ for every $\mu\in C$;
\item for every $\alpha\in K_h$ there exists $\mu_\alpha\in C$ satisfying
$
G(\mu_\alpha)=\alpha,
$ and $
h_{\mu_\alpha}(f)=h(\alpha).
$
\end{enumerate}
Then there exists $\Phi\in C(X,\mathbb R^m)$ such that
$$
R(\Phi;X,f)=K_h
$$
and
$$
\htop(f,L(\Phi,\alpha;X,f))=h(\alpha)
\qquad\text{for every }\alpha\in K_h.
$$
Moreover, $\Phi$ may be chosen so that
$
\int\Phi\,d\mu=G(\mu)
$
for every 
$
\mu\in C.
$
If, in addition, zero entropy ergodic measures are dense in
$\mathcal M_f(X)$, then for every
$N\in\mathbb N$ there exist
$
\Phi_1,\dots,\Phi_N\in C(X,\mathbb R^m)
$
such that, for every $j=1,\dots,N$,
$$
R(\Phi_j;X,f)=K_h
$$
and
$$
\htop(f,L(\Phi_j,\alpha;X,f))=h(\alpha)
\qquad\text{for every }\alpha\in K_h,
$$
and such that $\Phi_i$ and $\Phi_j$ are not cohomologous whenever
$i\neq j$.
\end{theorem}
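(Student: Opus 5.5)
Extend $G$ in stages and then represent the result by a potential. By Proposition~\ref{prop-compact-ergodic-face}, $F_C:=\overline{\operatorname{conv}}(C)$ is a closed face of $\mathcal M_f(X)$ and a Bauer simplex with $\operatorname{ext}F_C=C$. Applying part (4) coordinatewise, $G$ extends uniquely to a continuous affine map $G_F:F_C\to\mathbb R^m$, given by $G_F(\eta)=\int_C G\,d\xi_\eta$. Since $K_h$ is convex and compact and $\xi_\eta$ is a probability measure on $C$, $G_F(\eta)\in K_h$.

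Next take the constant set-valued map $T(\mu):=K_h$ on the Choquet simplex $\mathcal M_f(X)$. It is affine (because $K_h$ is convex), lower semicontinuous (trivially, since it is constant), and has nonempty closed convex values. Lemma~\ref{thm:lazar} then gives a continuous affine $L:\mathcal M_f(X)\to K_h$ with $L|_{F_C}=G_F$. By Lemma~\ref{lem:affine-representation} there is $\Phi\in C(X,\mathbb R^m)$ with $\int\Phi\,d\mu=L(\mu)$ for all invariant $\mu$; in particular $\int\Phi\,d\mu=G(\mu)$ on $C$.

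To verify the spectrum, Proposition~\ref{prop:shifts-facts} gives $R(\Phi)=L(\mathcal M_f(X))$. This set lies in $K_h$ and contains $G(C)\supset K_h$ by (ii), so $R(\Phi)=K_h$. The conditional variational principle gives $E_\Phi(\alpha)=\sup\{h_\mu(f):L(\mu)=\alpha\}$, and (ii) yields $E_\Phi(\alpha)\ge h(\alpha)$.

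The upper bound is the main step. For $\mu$ with $L(\mu)=\alpha$, use the ergodic decomposition $\xi_\mu$ and affinity: $h_\mu(f)=\int h_\nu\,d\xi_\mu$ and $\alpha=\int L(\nu)\,d\xi_\mu$. Concavity and upper semicontinuity of $h$ (Jensen for usc concave functions on a compact convex set) reduce the claim to showing $h_\nu(f)\le h(L(\nu))$ for every ergodic $\nu$. On $C$ this is hypothesis (i). Off $C$ we have no control, and this is exactly where the construction must be refined: I would impose it by replacing $T$ with a nonconstant correspondence $T(\mu):=\{\beta\in K_h: h(\beta)\ge h_\mu(f)\}$. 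This has closed convex values (superlevel sets of a usc concave $h$), and it is affine because $h$ is concave and entropy is affine. It is nonempty since $h_\mu\le H=\max h$, and $G_F$ is a selection on $F_C$ by (i) together with Jensen. The difficulty is lower semicontinuity of $T$: entropy is only usc, so $\{\mu: h_\mu(f) \le s\}$ is not open, and $T$ may fail to be lsc.

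My first attempt would be to use instead $T_\varepsilon$, or to replace $h_\mu$ by a continuous affine majorant. Alternatively, I would argue directly that superlevel sets $\{h\ge s\}$ shrink monotonically in $s$ and are strictly nested near the unique maximum, and verify lsc using usc of entropy: if $h_{\mu_n}\to$ lower values, the sets only grow, and lsc requires exactly that the sets can only grow under perturbation, which follows from $\limsup h_{\mu_n}\le h_\mu$. Indeed, for $U$ open meeting $\{h\ge h_\mu\}$ at a point with $h>h_\mu$ (possible by concavity and the uniqueness of the maximiser, except when $h_\mu=H$, where $\mu$ is the measure of maximal entropy and the value is the single point), a neighbourhood of $\mu$ has $h_{\mu'}<h_\mu+\delta$. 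The maximal-entropy case needs separate care, arranged by including that measure in $C$.

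For the non-cohomologous realizations, use the density of zero-entropy ergodic measures. Pick distinct zero-entropy ergodic $\nu_1,\dots,\nu_N\notin C$ (or perturb the choices), adjoin them to $C$ with prescribed distinct values $G(\nu_j)\in K_h$ differing among the $N$ constructions, and rerun the argument. Condition (i) holds trivially for these measures since $h\ge0$. Cohomologous potentials differ by a constant $c$ on all integrals; choosing the prescribed values so that the differences are inconsistent with a single constant, while keeping the values on $C$ identical, forces $c=0$ and yields a contradiction.
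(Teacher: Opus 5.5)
Your final route is exactly the paper's proof. It uses the correspondence $T(\mu)=\{\beta\in K_h:h(\beta)\ge h_\mu(f)\}$, obtains lower semicontinuity by moving a point of $U\cap T(\mu)$ toward the unique maximizer $\alpha_\ast$ and invoking upper semicontinuity of entropy, applies Lazar's selection theorem to the Bauer-simplex extension of $G$ followed by Phelps's representation, and gets multiplicity by adjoining zero-entropy ergodic measures outside $C$ with the values on $C$ frozen. Once $L(\mu)\in T(\mu)$ holds for every invariant $\mu$, the upper bound is immediate, so your constant correspondence and ergodic-decomposition reduction are unnecessary.

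The one misstep is the case $h_\mu(f)=H$. It needs no separate care, because $\alpha_\ast\in T(\mu')$ for every $\mu'\in\mathcal M_f(X)$, which makes lower semicontinuity there trivial. Your proposed remedy of adding a measure to $C$ could not help in any case, since lower semicontinuity is a property of $T$ on all of $\mathcal M_f(X)$ and does not depend on $C$. Uniqueness of the measure of maximal entropy is also not assumed in the statement.
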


\begin{proof}
	Put
	$
	F:=\overline{\operatorname{conv}}(C),
	$
	and let $\alpha_\ast$ be the unique maximizer of $h$. Since
	$C\subset\mathcal M_f^e(X)$ is compact,
	Proposition~\ref{prop-compact-ergodic-face} implies that $F$ is a closed
	face of $\mathcal M_f(X)$, and that $G:C\to K_h$ admits a unique continuous affine
	extension
	$$
	g:F\longrightarrow K_h.
	$$

	For each $\mu\in \mathcal M_f(X)$, define
	$$
	T(\mu):=\{\alpha\in K_h:h_\mu(f)\leq h(\alpha)\}.
	$$
	We verify the properties needed to apply Lazar's affine selection
	theorem.

	\medskip
	\noindent
	{\it Claim 1.} For every $\mu\in \mathcal M_f(X)$, the set $T(\mu)$ is a nonempty
	compact convex subset of $K_h$.

	\begin{proof}[Proof of Claim 1]
	Since
	$
	h_\mu(f)\leq\htop(f)=H=h(\alpha_\ast),
	$
	one has $\alpha_\ast\in T(\mu)$, so $T(\mu)$ is nonempty.
	Because $h$ is upper semicontinuous, $T(\mu)$ is a closed subset of the
	compact set $K_h$, and hence is compact. Finally, if
	$\alpha_1,\alpha_2\in T(\mu)$ and $\lambda\in[0,1]$, concavity of $h$
	gives
	$$
	\begin{aligned}
	h\bigl(\lambda\alpha_1+(1-\lambda)\alpha_2\bigr)
	\geq
	\lambda h(\alpha_1)+(1-\lambda)h(\alpha_2)
	\geq h_\mu(f).
	\end{aligned}
	$$
	Thus
	$\lambda\alpha_1+(1-\lambda)\alpha_2\in T(\mu)$, proving convexity.
	\end{proof}

	\medskip
	\noindent
	{\it Claim 2.} The set-valued map
	$
	T:\mathcal M_f(X)\to 2^{\mathbb R^m}
	$
	is lower semicontinuous.

	\begin{proof}[Proof of Claim 2]
	It suffices to show that for every relatively open set $U\subset K_h$, the
	set
	$
	\{\mu\in \mathcal M_f(X):T(\mu)\cap U\neq\varnothing\}
	$
	is open in $\mathcal M_f(X)$. Let $\mu_0$ belong to this set and choose
	$\alpha_0\in T(\mu_0)\cap U$.

	If $\alpha_0=\alpha_\ast$, then
	$
	h_\mu(f)\leq H=h(\alpha_\ast)
	$
	for every
	$
	\mu\in \mathcal M_f(X),
	$
	and hence $\alpha_\ast\in T(\mu)\cap U$ for every $\mu\in \mathcal M_f(X)$.
	Suppose that $\alpha_0\neq\alpha_\ast$. Since $U$ is relatively open in
	the convex set $K_h$, we may choose $\varepsilon>0$ sufficiently small
	that
	$$
	\beta=(1-\varepsilon)\alpha_0+\varepsilon\alpha_\ast\in U.
	$$
	The uniqueness of the maximizer gives $h(\alpha_0)<H$, and concavity
	yields
	$$
	\begin{aligned}
	h(\beta)
	\geq
	(1-\varepsilon)h(\alpha_0)+\varepsilon h(\alpha_\ast)
	>h(\alpha_0)
	\geq h_{\mu_0}(f).
	\end{aligned}
	$$
	By upper semicontinuity of the entropy map, there exists a weak$^*$
	neighborhood $V$ of $\mu_0$
	such that
	$
	h_\mu(f)<h(\beta)
	$
	for every $\mu\in V$. Thus $\beta\in T(\mu)\cap U$ for every
	$\mu\in V$. This proves the lower semicontinuity of $T$.
	\end{proof}

	\medskip
	\noindent
	{\it Claim 3.} For every $\mu,\nu\in \mathcal M_f(X)$ and $\lambda\in[0,1]$,
	$
	\lambda T(\mu)+(1-\lambda)T(\nu)
	\subset
	T\bigl(\lambda\mu+(1-\lambda)\nu\bigr).
	$

	\begin{proof}[Proof of Claim 3]
	Take $\alpha\in T(\mu)$ and $\beta\in T(\nu)$. By the affinity of
	entropy on invariant measures and the concavity of $h$,
	$$
	\begin{aligned}
	h_{\lambda\mu+(1-\lambda)\nu}(f)
	=
	\lambda h_\mu(f)+(1-\lambda)h_\nu(f)
	\leq
	\lambda h(\alpha)+(1-\lambda)h(\beta)
	\leq
	h\bigl(\lambda\alpha+(1-\lambda)\beta\bigr).
	\end{aligned}
	$$
	Hence
	$
	\lambda\alpha+(1-\lambda)\beta
	\in T(\lambda\mu+(1-\lambda)\nu)
	$.
	\end{proof}

	\medskip
	\noindent
	{\it Claim 4.} We have
	$
	g(\eta)\in T(\eta)
	$
	for every $\eta\in F$.

	\begin{proof}[Proof of Claim 4]
	Fix $\eta\in F$, and let $\xi_\eta\in\mathcal M(C)$ be its unique
	representing probability measure. Then
	$$
	\eta=\int_C\mu\,d\xi_\eta(\mu)
	\qquad\text{and}\qquad
	g(\eta)=\int_C G(\mu)\,d\xi_\eta(\mu).
	$$
	Using the affinity of entropy, condition~(i), and Jensen's inequality,
	we obtain
	$$
	\begin{aligned}
	h_\eta(f)
	=
	\int_C h_\mu(f)\,d\xi_\eta(\mu)
	\leq
	\int_C h(G(\mu))\,d\xi_\eta(\mu)
	\leq
	h\left(\int_C G(\mu)\,d\xi_\eta(\mu)\right)=
	h(g(\eta)).
	\end{aligned}
	$$
	Thus $g(\eta)\in T(\eta)$.
	\end{proof}

	Claims~1--4 allow us to apply Lemma~\ref{thm:lazar}. Therefore $g$
	extends to a continuous affine selection
	$$
	L:\mathcal M_f(X)\longrightarrow K_h
	$$
	such that
	$$
	L|_F=g,
	\qquad
	L(\mu)\in T(\mu)
	\quad\text{for every }\mu\in \mathcal M_f(X).
	$$
	By Lemma~\ref{lem:affine-representation}, there exists
	$\Phi\in C(X,\mathbb R^m)$ such that
	$$
	L(\mu)=\int\Phi\,d\mu
	\qquad\text{for every }\mu\in \mathcal M_f(X).
	$$
	Since $C\subset F$ and $g|_C=G$, this choice satisfies
	$
	\int\Phi\,d\mu=G(\mu)
	~\text{for every }\mu\in C.
	$

	We now prove that
	$
	R(\Phi;X,f)=K_h.
	$
	By Proposition~\ref{prop:shifts-facts},
	$$
	R(\Phi;X,f)
	=
	\left\{\int\Phi\,d\mu:\mu\in \mathcal M_f(X)\right\}
	=
	L(\mathcal M_f(X))
	\subset K_h.
	$$
	On the other hand, condition~(ii) implies that $G(C)=K_h$. Hence
	$$
	K_h=G(C)\subset g(F)\subset L(\mathcal M_f(X))=R(\Phi;X,f).
	$$
	Therefore
	$
	R(\Phi;X,f)=K_h.
	$

	Next fix $\alpha\in K_h$. If $\mu\in \mathcal M_f(X)$ satisfies
	$
	\int\Phi\,d\mu=\alpha,
	$
	then $L(\mu)=\alpha$. Since $L(\mu)\in T(\mu)$, it follows that
	$
	h_\mu(f)\leq h(\alpha).
	$
	Taking the supremum over all such $\mu$ and applying
	Proposition~\ref{prop:shifts-facts}, we obtain
	$$
	\htop(f,L(\Phi,\alpha;X,f))\leq h(\alpha).
	$$
	Conversely, condition~(ii) supplies $\mu_\alpha\in C$ such that
	$
	G(\mu_\alpha)=\alpha,
	$
	and
	$
	h_{\mu_\alpha}(f)=h(\alpha).
	$
	Because $L|_C=G$,
	$
	\int\Phi\,d\mu_\alpha=L(\mu_\alpha)=G(\mu_\alpha)=\alpha.
	$
	Another application of Proposition~\ref{prop:shifts-facts} gives
	$$
	\htop(f,L(\Phi,\alpha;X,f))
	\geq h_{\mu_\alpha}(f)
	=h(\alpha).
	$$
	Combining the two inequalities yields
	$$
	\htop(f,L(\Phi,\alpha;X,f))=h(\alpha)
	\qquad\text{for every }\alpha\in K_h.
	$$

	Assume now that zero entropy ergodic measures are dense in
	$\mathcal M_f(X)$, and fix $N\in\mathbb N$.
	Since zero entropy ergodic measures are dense in $\mathcal M_f(X)$,
	we may choose pairwise distinct measures
	$
	\zeta_1,\dots,\zeta_N\in\mathcal M_f^e(X)\setminus C
	$
	such that
	$
	h_{\zeta_j}(f)=0
	$
	for every $j=1,\dots,N$.
	Fix two distinct points $\alpha',\alpha''\in K_h$, and set
	$
	C_N:=C\cup\{\zeta_1,\dots,\zeta_N\}.
	$
	For each $k=1,\dots,N$, define $G^{(k)}:C_N\to K_h$ by
	$$
	G^{(k)}|_C=G
	$$
	and
	$$
	G^{(k)}(\zeta_j)
	=
	\begin{cases}
	\alpha'',&j=k,\\
	\alpha',&j\neq k.
	\end{cases}
	$$
	Since $C$ is compact and the added set is finite and disjoint from $C$,
	each $G^{(k)}$ is continuous on the compact set $C_N$.
	Let
	$
	F_N:=\overline{\operatorname{conv}}(C_N).
	$
	By Proposition~\ref{prop-compact-ergodic-face}, $F_N$ is a closed face
	of $\mathcal M_f(X)$, and each $G^{(k)}$ has a unique continuous affine extension
	$$
	g^{(k)}:F_N\longrightarrow K_h.
	$$
	For every $\mu\in C_N$,
	$$
	h_\mu(f)\leq h(G^{(k)}(\mu)).
	$$
	Consequently, if
	$\eta\in F_N$ and $\xi_\eta\in\mathcal M(C_N)$ is its representing
	probability measure, then the same Jensen argument as in Claim~4 gives
	$$
	\begin{aligned}
	h_\eta(f)
	=
	\int_{C_N}h_\mu(f)\,d\xi_\eta(\mu)\leq
	\int_{C_N}h(G^{(k)}(\mu))\,d\xi_\eta(\mu)
	\leq
	h\left(\int_{C_N}G^{(k)}(\mu)\,d\xi_\eta(\mu)\right)
	=
	h(g^{(k)}(\eta)).
	\end{aligned}
	$$
	Thus
	$
	g^{(k)}(\eta)\in T(\eta)
	$
	for every $\eta\in F_N$.

	By Claims~1--3 and Lemma~\ref{thm:lazar}, each $g^{(k)}$ extends to a
	continuous affine selection
	$$
	L^{(k)}:\mathcal M_f(X)\longrightarrow K_h
	$$
	satisfying
	$$
	L^{(k)}|_{F_N}=g^{(k)},
	\qquad
	L^{(k)}(\mu)\in T(\mu)
	\quad\text{for every }\mu\in \mathcal M_f(X).
	$$
	Lemma~\ref{lem:affine-representation} gives
	$\Phi_k\in C(X,\mathbb R^m)$ such that
	$$
	L^{(k)}(\mu)=\int\Phi_k\,d\mu
	\qquad\text{for every }\mu\in \mathcal M_f(X).
	$$
	Since $G^{(k)}|_C=G$ and condition~(ii) is unchanged, the preceding
	rotation-set and entropy-spectrum arguments apply verbatim to
	$\Phi_k$. Therefore, for every $k=1,\dots,N$,
	$$
	R(\Phi_k;X,f)=K_h
	$$
	and
	$$
	\htop(f,L(\Phi_k,\alpha;X,f))=h(\alpha)
	\qquad\text{for every }\alpha\in K_h.
	$$
	Moreover,
	$
	\int\Phi_k\,d\mu=G^{(k)}(\mu)
	~\text{for every }\mu\in C_N.
	$

	We finally prove that $\Phi_1,\dots,\Phi_N$ are pairwise
	non-cohomologous. Fix $i\neq j$ and any $\rho\in C$. Since all the maps
	$G^{(k)}$ agree with $G$ on $C$,
	$$
	\int(\Phi_i-\Phi_j)\,d\rho=0,
	$$
	whereas
	$$
	\int(\Phi_i-\Phi_j)\,d\zeta_i
	=
	\alpha''-\alpha'
	\neq0.
	$$
	If $\Phi_i-\Phi_j=c+u-u\circ f$ for some $c\in\mathbb R^m$ and
	$u\in C(X,\mathbb R^m)$, then its integral would equal $c$ for every
	invariant measure, contradicting the preceding two identities.
	Therefore the potentials are pairwise non-cohomologous. This completes
	the proof.
\end{proof}

\section{Complete multiparameter spectrum realization on transitive SFTs}\label{sec:multiparameter-realization}\label{mp:sec:multiparameter-realization}

The proof proceeds in four steps.  First, we realize prescribed subcritical
entropy profiles within transitive SFTs.  Second, we construct pairwise
separated SFT layers whose topological entropies converge to that of the
ambient system.  Third, we embed successive shells of the target hypograph
into these layers, thereby obtaining a compact calibrated pair.  Finally, we
apply the common realization principle to this pair to obtain the vector
potentials asserted in Theorem~\ref{mp:thm:main-realization}.

\subsection{Symbolic preliminaries}\label{mp:sec:preliminaries}

Let $\mathcal A$ be a finite alphabet.  The \emph{full two-sided shift} over
$\mathcal A$ is $(\Sigma_{\mathcal A},\sigma)$, where
$\Sigma_{\mathcal A}=\mathcal A^{\bbZ}$ and the shift map
$\sigma:\Sigma_{\mathcal A}\to\Sigma_{\mathcal A}$ is defined, for
$x=(x_n)_{n\in\bbZ}\in\Sigma_{\mathcal A}$, by
$$
(\sigma x)_n=x_{n+1}
\qquad\text{for every }n\in\bbZ.
$$
A \emph{two-sided subshift} is a closed $\sigma$-invariant subset $X\subset\Sigma_{\mathcal A}$.  Its language is denoted by
$
\mathcal L(X)=\bigcup_{n\ge1}\mathcal L_n(X),
$
where $\mathcal L_n(X)$ is the set of words of length $n$ occurring in points of $X$.  If
$w=w_0\cdots w_{n-1}\in\mathcal L_n(X)$, its coordinate-zero cylinder is
$
[w]=\{x\in X:x_0\cdots x_{n-1}=w\}.
$

A \emph{shift of finite type} (SFT) is specified by a finite zero-one
transition matrix $A=(A_{ij})_{1\le i,j\le s}$ as
$$
X_A
=
\left\{x=(x_n)_{n\in\bbZ}\in\{1,\ldots,s\}^{\bbZ}:
A_{x_nx_{n+1}}=1\text{ for every }n\in\bbZ
\right\}.
$$
Here $A$ is \emph{irreducible} if, for every pair of states $i,j$, one has $(A^n)_{ij}>0$ for some $n\ge1$, and it is \emph{primitive} if $A^N$ has all entries positive for some $N\ge1$.

A subshift is \emph{topologically transitive} if for every pair of nonempty open sets $U,V\subset X$, one has
$\sigma^n(U)\cap V\ne\varnothing$ for some $n\ge0$, and it is \emph{topologically mixing} if this holds for every sufficiently large $n$.  The SFT $X_A$ is transitive if and only if $A$ is irreducible, and it is mixing if and only if $A$ is primitive.  If $A$ is irreducible, the \emph{period} of $X_A$ is
$
p=\gcd\{n\ge1:(A^n)_{ii}>0\},
$
and this number is independent of the state $i$. Moreover, $p=1$ exactly in
the mixing case. A transitive SFT is called \emph{nontrivial} if it is not a
single periodic orbit, equivalently if its topological entropy is positive.

Given two subshifts $X$ and $Y$, a map $\pi:X\to Y$ is
\emph{shift-commuting} if
$$
\pi(\sigma x)=\sigma(\pi(x))
\qquad\text{for every }x\in X.
$$
A \emph{topological conjugacy} between two subshifts is a shift-commuting
homeomorphism. A \emph{factor map} is a continuous surjective
shift-commuting map, and an \emph{embedding} is an injective continuous
shift-commuting map.  A \emph{sofic shift} is a factor of an SFT.  A
dynamical system $(X,f)$ is \emph{minimal} if
$
\overline{\{f^n(x):n\geq0\}}=X
$
for every $x\in X$.  A point $z\in\mathcal A^{\bbZ}$ is a \emph{Toeplitz sequence} if, for every coordinate $i\in\bbZ$, there is an integer $p_i\ge1$ such that
$
z_{i+kp_i}=z_i
$
for every 
$
k\in\bbZ.
$
The orbit closure of a Toeplitz sequence is called a \emph{Toeplitz
subshift}. Every Toeplitz subshift is minimal: indeed, each finite block in
a Toeplitz sequence recurs periodically and hence with bounded gaps.  The
preceding SFT and sofic conventions agree with the standard terminology in
symbolic dynamics; see \cite[Chapters~1--4]{LindMarcus2021}.

\subsection{Prescribed entropy embeddings in transitive SFTs}\label{mp:sec:SFT-entropy-sections}

This part constructs the prescribed entropy embeddings used on the
high entropy SFT layers by combining a full shift with pairwise
disjoint iterates, entropy realization on Toeplitz subshifts,
Krieger's embedding theorem, and cyclic transfer of invariant measures and
entropy.

\begin{lemma}\label{mp:prop:disjoint-full-shift-iterates}\cite[Lemma~7.9]{AvilaCrovisierWilkinson2021}
Let $(Y,\sigma)$ be a nontrivial transitive two-sided SFT. For every
$0<\eta<\htop(\sigma,Y)$, there exist integers $k\ge1$ and $q\ge2$, together with a compact set $\Lambda\subset Y$, such that:
\begin{enumerate}[label=(\roman*)]
\item $\sigma^k(\Lambda)=\Lambda$;
\item $(\Lambda,\sigma^k)$ is topologically conjugate to the two-sided full shift on $q$ symbols;
\item the sets
$
\Lambda,\sigma(\Lambda),\dots,\sigma^{k-1}(\Lambda)
$
are pairwise disjoint;
\item
$
\frac1k\log q>\eta.
$
\end{enumerate}
\end{lemma}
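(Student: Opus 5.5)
We prove the lemma with a standard horseshoe construction built from return words to a fixed cylinder; the lemma is cited from \cite{AvilaCrovisierWilkinson2021}, and this is how we would establish it directly.

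\textbf{Reduction and counting.} First we reduce to the mixing case. Let $p$ be the period of $Y$. The cyclic decomposition $Y=D_0\cup\cdots\cup D_{p-1}$ has each $(D_i,\sigma^p)$ a mixing SFT of entropy $p\,\htop(\sigma,Y)$. A construction for $(D_0,\sigma^p)$ with threshold $p\eta$ yields $\Lambda\subset D_0$ and $k'$, and we take $k=pk'$. Then $\frac1k\log q>\eta$, and disjointness of $\sigma^j\Lambda$ for $j\not\equiv0\pmod p$ is automatic, since these sets lie in different $D_i$. So assume $Y$ is mixing, presented by a primitive matrix on states $\{1,\dots,s\}$, and fix a state $a$. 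By Perron--Frobenius, the number of paths of length $n$ from $a$ to $a$ grows like $e^{n\htop}$.

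\textbf{Choosing the code.} Fix a large $n$ and consider words $w$ of length $k=n+r$ of the form $w=M\,u\,v$. Here $M$ is a fixed marker word, and $u$ ranges over admissible words of length $n$ that do not contain $M$. The connector $v$ is chosen so that every concatenation of such $w$'s is admissible, which is possible by primitivity: all words start and end at state $a$. We pick $M$ long, for instance a word that cannot overlap itself nontrivially and is not periodic with small period. Forbidding $M$ costs only a little entropy, since the SFT avoiding $M$ has entropy close to $\htop$ when $M$ is long. Hence the number $q$ of choices of $u$ satisfies $\frac1k\log q>\eta$ once $n$ is large relative to $|M|+r$.

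\textbf{Building $\Lambda$ and the conjugacy.} Let $\Lambda$ be the set of bi-infinite concatenations $\cdots w_{-1}.w_0w_1\cdots$, with the marker of $w_0$ at coordinate $0$. Then $\sigma^k\Lambda=\Lambda$, and the coding map to the full $q$-shift is a continuous bijection commuting with $\sigma^k$. Injectivity and continuity of the inverse follow because the block boundaries are pinned by the markers at multiples of $k$.

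\textbf{Main obstacle: disjointness.} The key step is showing that the sets $\Lambda,\sigma(\Lambda),\dots,\sigma^{k-1}(\Lambda)$ are pairwise disjoint. Suppose $x\in\Lambda\cap\sigma^j\Lambda$ with $0<j<k$. Then $x$ has occurrences of $M$ at coordinate $0$ and at coordinate $-j\bmod k$. Such an occurrence must overlap $M$, overlap $u$, or straddle a boundary between $u$, $v$, and the next marker. Overlaps with $M$ are excluded by the non-self-overlap property. Occurrences inside $u$ are excluded by construction. Straddling occurrences are excluded by also forbidding in $u$ all words that could complete to $M$ across a boundary, namely prefixes and suffixes of $M$ of length at least $|M|/2$, with $v$ chosen short and fixed. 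Verifying this last straddling case, while keeping the entropy loss small, is where the careful bookkeeping lies. With that in place, $M$ occurs in points of $\Lambda$ only at positions that are multiples of $k$, which gives $j\equiv0\pmod k$, a contradiction.
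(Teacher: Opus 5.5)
Your route differs from the paper's only in being self-contained: the paper gives no proof of this lemma, quoting it from \cite[Lemma~7.9]{AvilaCrovisierWilkinson2021} and using it as a black box in Theorem~\ref{mp:thm:subcritical-entropy-sections}. Your architecture is sound.
The reduction to the mixing case via the cyclic decomposition works with $k=pk'$. Disjointness of $\sigma^j\Lambda$ across different $D_i$ is automatic, and disjointness of $\sigma^{pl}\Lambda$ for $0<l<k'$ comes from the construction for $\sigma^p$.
The block-concatenation map from the full $q$-shift onto $\Lambda$ is continuous, injective (blocks are aligned at multiples of $k$ and pairwise distinct), and intertwines the full shift with $\sigma^k$.
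Finally, if $M$ occurs in points of $\Lambda$ only at multiples of $k$, then $\Lambda\cap\sigma^j\Lambda=\varnothing$ for $0<j<k$.
The citation buys brevity. Your construction buys an elementary, purely symbolic proof that shows why the iterates are disjoint: the markers pin the phase modulo $k$.

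Three steps you leave as assertions must be supplied; each is routine.

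\emph{Markers exist.} Choose a state $a$ with two distinct first-return loops $\gamma,\delta$, which is possible because the graph is irreducible and not a single cycle. Label them so that $|\delta|\ge|\gamma|$. Then $M=\gamma^L\delta$ is unbordered. Indeed, the suffix copy of $M$ in a border must begin at an occurrence of $a$, hence at a positive multiple of $|\gamma|$. This forces either $\delta=\gamma$ or a symbol $a$ at position $|\gamma|<|\delta|$ of $\delta$, and both are impossible.

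\emph{The straddling analysis can be dropped.} Since $M$ is unbordered, any occurrence of $M$ in a point of $\Lambda$ either coincides with a marker or lies inside a single inter-marker segment, meaning $u$ together with its connectors. So it suffices to call $u$ admissible when $M$ does not occur anywhere in that segment; no forbidding of half-prefixes or half-suffixes is needed.

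\emph{The count.} The entropy of the subshift avoiding $M$ does not by itself control the number of words with prescribed end states, since that subshift may be reducible. A union bound over positions works instead: it gives at least $c\lambda^n-Cn\lambda^{n-|M|+O(1)}$ admissible $u$, where $\lambda=e^{\htop(\sigma,Y)}$. Take $n$ to be a small constant times $\lambda^{|M|}$. Then $|M|$ and the connectors are $O(\log n)$, so $\frac1k\log q\to\htop(\sigma,Y)>\eta$.
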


\begin{lemma}\label{mp:lem:cyclic-measure-transfer}
Let $f:Z_0\to Z_0$ be a homeomorphism of a compact metric space.  Let $\Omega\subset Z_0$ be compact with $f^k(\Omega)=\Omega$, and suppose that
$
\Omega,f(\Omega),\dots,f^{k-1}(\Omega)
$
are pairwise disjoint.  Put
$
Z=\bigcup_{i=0}^{k-1}f^i(\Omega).
$
Then $Z$ is compact and $f$-invariant, and the averaging map
$$
\mathcal A:\mathcal M_{f^k}(\Omega)\longrightarrow \mathcal M_f(Z),
\qquad
\mathcal A(\nu)=\frac1k\sum_{i=0}^{k-1}(f^i)_\ast\nu,
$$
is an affine homeomorphism.  Its inverse is $\mu\mapsto k\mu|_\Omega$.  It preserves ergodicity and satisfies
$
h_{\mathcal A(\nu)}(f)=\frac1k h_\nu(f^k).
$
If $(\Omega,f^k)$ is minimal, then $(Z,f)$ is minimal.
\end{lemma}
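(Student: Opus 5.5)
The statement to prove is Lemma~\ref{mp:lem:cyclic-measure-transfer}. The plan is to verify the structural claims first, then the properties of $\mathcal A$, and finally minimality.

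\emph{Compactness and invariance of $Z$.} $Z$ is a finite union of compact sets $f^i(\Omega)$, so it is compact. Since $f(f^{k-1}(\Omega))=f^k(\Omega)=\Omega$, we get $f(Z)=Z$.

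\emph{$\mathcal A$ is well defined, affine and continuous.} For $\nu\in\mathcal M_{f^k}(\Omega)$, the measure $\mathcal A(\nu)$ is supported on $Z$. It is $f$-invariant because $f_\ast$ cyclically permutes the summands: the last summand maps to $(f^k)_\ast\nu=\nu$. Affinity and weak$^*$ continuity are clear, since $\int\phi\,d\mathcal A(\nu)=\frac1k\sum_i\int\phi\circ f^i\,d\nu$.

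\emph{Inverse.} By disjointness, $(f^i)_\ast\nu$ is carried by $f^i(\Omega)$, which is disjoint from $\Omega$ for $1\le i\le k-1$. Hence $k\mathcal A(\nu)|_\Omega=\nu$.

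Conversely, take $\mu\in\mathcal M_f(Z)$. The pieces $f^i(\Omega)$ partition $Z$ into Borel sets, and $f$ maps $f^i(\Omega)$ bijectively onto $f^{i+1}(\Omega)$ (indices mod $k$). Invariance then gives $\mu(f^i(\Omega))=\mu(\Omega)$ for all $i$, so $\mu(\Omega)=1/k$. It also gives $\mu|_{f^i(\Omega)}=(f^i)_\ast(\mu|_\Omega)$. The first identity uses that $f$ is a homeomorphism: $\mu(f^{i+1}(\Omega))=\mu(f^{-1}f^{i+1}\Omega)$. The second uses $\mu(f^i(B))=\mu(B)$ for Borel $B\subset\Omega$. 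Consequently $\nu:=k\mu|_\Omega$ is a probability measure, it is $f^k$-invariant because $f^{-k}(\Omega)=\Omega$, and $\mathcal A(\nu)=\mu$. So $\mathcal A$ is a continuous bijection between compact metric spaces, hence a homeomorphism. Its inverse is affine as well, since restriction is linear.

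\emph{Ergodicity.} Because $\mathcal A$ is an affine bijection between the simplices $\mathcal M_{f^k}(\Omega)$ and $\mathcal M_f(Z)$, it maps extreme points to extreme points. These extreme points are exactly the ergodic measures on each side ($\mathcal M_f(Z)$ embeds as a face of $\mathcal M_f(Z_0)$, but only the intrinsic statement is needed). Hence ergodicity is preserved in both directions.

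\emph{Entropy.} Set $\mu=\mathcal A(\nu)$. Each $(f^i)_\ast\nu$ is $f^k$-invariant, and $f^i$ is a measure isomorphism from $(\Omega,\nu,f^k)$ to $(f^i\Omega,(f^i)_\ast\nu,f^k)$, so $h_{(f^i)_\ast\nu}(f^k)=h_\nu(f^k)$. Using $h_\mu(f^k)=k\,h_\mu(f)$ and affinity of entropy for $f^k$ (the measure $\mu$ is $f^k$-invariant), we obtain
\[
k\,h_\mu(f)=h_\mu(f^k)=\frac1k\sum_{i=0}^{k-1}h_{(f^i)_\ast\nu}(f^k)=h_\nu(f^k).
\]
This gives $h_{\mathcal A(\nu)}(f)=\frac1k h_\nu(f^k)$.

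\emph{Minimality.} Let $x\in Z$, say $x\in f^j(\Omega)$. Then $f^{k-j}x\in\Omega$ (taking $j=0$ if $x\in\Omega$). By minimality of $(\Omega,f^k)$, the $f^k$-orbit of $y:=f^{k-j}x$ is dense in $\Omega$, so the $f$-orbit of $y$ is dense in $\bigcup_i f^i(\Omega)=Z$, since each $f^i$ is continuous and surjective onto $f^i(\Omega)$. Because $Z$ is compact and $f$ is a homeomorphism, the forward orbit closure of $x$ contains that of $y$, which is $Z$.

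No step is a serious obstacle. The only point needing care is the Borel bookkeeping in the inverse: one must use the disjointness and the fact that $f$ is a homeomorphism to identify $\mu|_{f^i(\Omega)}$ with $(f^i)_\ast(\mu|_\Omega)$.
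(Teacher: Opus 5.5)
Your proposal is correct. The minimality step is the same argument as the paper's: pass to a point of $\Omega$ on the forward orbit, use density of its $f^k$-orbit in $\Omega$, and push forward by the continuous maps $f^i$.

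The difference is in the measure-theoretic part. The paper does not argue it. It observes that $\Omega,\dots,f^{k-1}(\Omega)$ form a clopen cyclic decomposition of $Z$ and cites Denker--Grillenberger--Sigmund, Proposition~23.17, for three facts: the inverse formula, preservation of ergodicity, and the entropy identity. You prove these directly:
\begin{itemize}
\item the inverse, by Borel bookkeeping, using $f^{-i}(\Omega)=f^{k-i}(\Omega)$ and $\mu|_{f^i\Omega}=(f^i)_\ast(\mu|_\Omega)$;
\item ergodicity, because an affine bijection of simplices carries extreme points to extreme points;
\item entropy, by combining $h_\mu(f^k)=k\,h_\mu(f)$ with affinity of $f^k$-entropy and conjugacy invariance of each summand.
\end{itemize}
This makes the lemma self-contained. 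Deducing the homeomorphism property from ``continuous bijection between compact metric spaces'' also means you never need continuity of $\mu\mapsto k\mu|_\Omega$. In the paper's framing that continuity comes from $\Omega$ being clopen in $Z$.

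One small remark: in the last step you need only that $y=f^{k-j}x$ lies on the forward $f$-orbit of $x$. Compactness of $Z$ and the homeomorphism property play no role there.
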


\begin{proof}
The sets $\Omega,f(\Omega),\ldots,f^{k-1}(\Omega)$ form a clopen cyclic decomposition of the compact invariant set $Z$.  Standard measure-transfer facts for such a decomposition imply that $\mathcal A$ and $\mu\mapsto k\mu|_\Omega$ are inverse homeomorphisms, preserve ergodicity, and satisfy
$
h_{\mathcal A(\nu)}(f)=\frac1k h_\nu(f^k).
$
See, for example, \cite[Proposition~23.17]{DenkerGrillenbergerSigmund1976}. The definition of $\mathcal A$ also shows that it is affine.
If $(\Omega,f^k)$ is minimal, then $f^k$ is minimal on every cyclic
component $f^i(\Omega)$. Hence, for each $z\in Z$, the forward $f$-orbit of
$z$ meets every cyclic component in a dense set. Therefore
$
\overline{\{f^n(z):n\geq0\}}=Z,
$
which proves that $(Z,f)$ is minimal.
\end{proof}

\begin{lemma}\cite[Theorem~1]{DownarowiczSerafin2003}\label{mp:lem:toeplitz-entropy-realization}
Let $\mathcal K$ be a Choquet simplex, and let
$
e:\mathcal K\to[0,\infty)
$
be affine and upper semicontinuous.  Then there exist a Toeplitz subshift $(W,\sigma)$ and an affine homeomorphism
$
\varphi:\mathcal K\longrightarrow\mathcal M_\sigma(W)
$
such that
$
h_{\varphi(\kappa)}(\sigma)=e(\kappa)
$
for every 
$
\kappa\in\mathcal K.
$
\end{lemma}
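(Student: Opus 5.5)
This lemma is quoted from \cite{DownarowiczSerafin2003}, and I would not reprove it from scratch. The natural plan is to follow the Downarowicz--Serafin strategy: combine Downarowicz's realization of an arbitrary metrizable Choquet simplex as $\mathcal M_\sigma(W)$ for a Toeplitz subshift with entropy bookkeeping in the block construction. The first step is purely convex-analytic. By the Lazar--Lindenstrauss theorem, I would write $\mathcal K$ as an inverse limit $\varprojlim(\Delta_n,\pi_n)$ of finite-dimensional simplices $\Delta_n$ with affine surjections $\pi_n:\Delta_{n+1}\to\Delta_n$. Since $e$ is affine and upper semicontinuous, it is a pointwise decreasing limit of continuous affine functions. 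After reindexing and small perturbation, I would arrange these as $e_n\circ p_n$, where $p_n:\mathcal K\to\Delta_n$ is the canonical projection and $e_n$ is affine on $\Delta_n$, determined by its values at the vertices, with $e_{n+1}\leq e_n\circ\pi_n+\varepsilon_n$ and $\sum\varepsilon_n<\infty$.

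The second step is the symbolic construction. At stage $n$, attach to each vertex $v$ of $\Delta_n$ a family $\mathcal B_{n,v}$ of blocks, all of a common length $\ell_n$, with $\ell_{n}\mid\ell_{n+1}$ so that a Toeplitz skeleton (periodic parts with period $\ell_n$) is preserved. Choose the cardinality so that $\frac1{\ell_n}\log\#\mathcal B_{n,v}\approx e_n(v)$. Each block of $\mathcal B_{n+1,w}$ is built as a concatenation of stage-$n$ blocks in which blocks from $\mathcal B_{n,v}$ occur with frequency equal to the $v$-coordinate of $\pi_n(w)$. The holes of the skeleton are filled from the free choices, and the free symbols get sparser as $n$ grows, so that the limit point is a genuine Toeplitz sequence $z$. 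Put $W=\overline{\operatorname{orb}(z)}$.

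The third step identifies $\mathcal M_\sigma(W)$. Every invariant measure on $W$ is determined by its asymptotic frequencies of stage-$n$ block types. These frequencies form compatible points of the $\Delta_n$, and the rigid frequency prescription forces every such compatible sequence to occur. This yields the affine homeomorphism $\varphi:\mathcal K\to\mathcal M_\sigma(W)$, as in Downarowicz's original theorem.

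The main obstacle, and the real content, is the entropy identity $h_{\varphi(\kappa)}(\sigma)=e(\kappa)$ for every $\kappa$, not merely on extreme points. Affinity of entropy reduces this to ergodic $\kappa$, but nonconstant upper semicontinuous affine functions force exact control. For the upper bound, I would count the $\varphi(\kappa)$-typical $\ell_n$-names: they are concatenations with the frequencies of $p_n(\kappa)$, giving $h\leq e_n(p_n(\kappa))+o(1)$, and then let $n\to\infty$. For the lower bound, I would require that the free choices at each stage be distributed independently and equidistributed inside each vertex class, so that a Shannon--McMillan counting argument yields $h\geq\lim_n e_n(p_n(\kappa))-\sum_{k\geq n}\varepsilon_k$. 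Making these two bounds meet, while keeping the skeleton Toeplitz and the measure simplex exactly $\mathcal K$, is the delicate part of \cite{DownarowiczSerafin2003}. For the present paper, we simply invoke it.
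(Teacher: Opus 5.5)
Your proposal matches the paper, which gives no proof of this lemma and simply invokes Theorem~1 of Downarowicz--Serafin exactly as you do; your Lazar--Lindenstrauss and block-construction outline is optional context, not part of the argument. One caution on that outline: the density of holes in the Toeplitz skeletons must stay bounded away from zero, since a regular Toeplitz flow is uniquely ergodic with zero entropy, so ``the free symbols get sparser'' should mean only that each individual coordinate is eventually filled periodically.
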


For a shift space $X$, let $q_n(X)$ denote the number of points of least period $n$.

\begin{lemma}\cite[Corollary~10.1.9]{LindMarcus2021}\label{mp:lem:krieger-embedding}
Let $X$ be a subshift and $Y$ a mixing two-sided SFT.  Then there exists an embedding $\iota:X\hookrightarrow Y$ with $\iota(X)\ne Y$ if and only if
$
\htop(\sigma,X)<\htop(\sigma,Y)
$
and
$
q_n(X)\le q_n(Y)
$
for every 
$
n\ge1.
$
\end{lemma}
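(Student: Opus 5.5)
The lemma is Krieger's embedding theorem, quoted from \cite[Corollary~10.1.9]{LindMarcus2021}. In the paper we would simply invoke it. If a self-contained argument were wanted, the plan is as follows: prove necessity directly, then prove sufficiency by the standard marker construction.

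\emph{Necessity.} Suppose $\iota:X\hookrightarrow Y$ is an embedding with $\iota(X)\ne Y$.
\begin{enumerate}
\item Since $\iota$ is injective and shift-commuting, it maps points of least period $n$ bijectively onto points of least period $n$ in $\iota(X)\subset Y$. This gives $q_n(X)\le q_n(Y)$.
\item Since $\iota$ is a conjugacy onto its image, $\htop(\sigma,X)=\htop(\sigma,\iota(X))$.
\item Because $\iota(X)$ is a proper subshift of the mixing SFT $Y$, some word $w\in\mathcal L(Y)$ does not occur in $\iota(X)$.
\item The subshift of $Y$ obtained by forbidding $w$ has strictly smaller entropy. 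This follows from the Perron--Frobenius count of words in a mixing SFT: a positive proportion of long words contain $w$, with an exponential gain. Hence $\htop(\sigma,X)<\htop(\sigma,Y)$.
\end{enumerate}

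\emph{Sufficiency.} Assume $\htop(\sigma,X)<\htop(\sigma,Y)$ and $q_n(X)\le q_n(Y)$ for all $n$. After passing to a higher block presentation, $Y$ is an edge shift of a primitive graph.
\begin{enumerate}
\item Fix a large scale $N$ and a marker set in $X$ of the kind produced by the marker lemma. It is clopen, its visits occur with gaps at least $N$, and every point not lying on a periodic orbit of period below $N$ visits it with bounded gaps.
\item Cut each point of $X$ at its markers into blocks of length between $N$ and roughly $2N$.
\item The entropy gap gives, for long lengths $\ell$, more paths of length $\ell$ in $Y$ than words of length $\ell$ in $X$. Using primitivity, we can further require that these paths start and end at a fixed vertex and contain no copy of a chosen marker path $m$ of $Y$.
\item Fix an injection from $X$-blocks to such $Y$-paths, and encode each block between consecutive markers by its image, preceded by $m$. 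Decoding is possible because the occurrences of $m$ identify the block boundaries, so the resulting sliding block code is injective.
\item Points of $X$ with low period, which see no markers, are handled separately. The hypothesis $q_n(X)\le q_n(Y)$ lets us map the periodic orbits of period below $N$ injectively, and compatibly with least periods, onto periodic orbits of $Y$ chosen to avoid the coding pattern.
\item Gluing the periodic region to the coded region, one checks that the combined map is continuous, shift-commuting, injective and not onto.
\end{enumerate}

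The main obstacle is step 6, the transition region. These are points that agree with a low-period orbit on very long stretches but are not periodic. Their coded image must still be injective and consistent with the images chosen for the nearby periodic orbits. Lind and Marcus handle this with a careful three-case analysis of long periodic stretches, each coded by a periodic $Y$-pattern and bracketed by transition words. We would follow that treatment rather than reproduce it, since the lemma is used here only as a black box.
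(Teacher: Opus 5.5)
Your proposal is correct and matches the paper, which gives no argument of its own and simply cites Krieger's embedding theorem from Lind--Marcus. Your necessity argument is sound, and your sufficiency outline follows the standard marker construction used there. One small correction to the optional sketch: the marker lemma does not give bounded gaps for every non-periodic point. It only says that long marker-free stretches are locally periodic with period below $N$. That is precisely the transition-region issue you then raise in step 6, so your step 1 should be weakened accordingly.
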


\begin{theorem}\label{mp:thm:subcritical-entropy-sections}
Let $(Y,\sigma)$ be a nontrivial transitive two-sided SFT.  Let $E$ be a nonempty compact metric space, and let
$
e:E\to[0,\htop(\sigma,Y))
$
be upper semicontinuous.  Then there exists a topological embedding (that
is, a homeomorphism onto its image)
$
s:E\longrightarrow \mathcal M_\sigma^e(Y)
$
such that
$$
h_{s(x)}(\sigma)=e(x)
\qquad\text{for every }x\in E.
$$
\end{theorem}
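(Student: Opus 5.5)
The plan is to build a Choquet simplex whose extreme boundary is a copy of $E$, realize it as the simplex of invariant measures of a Toeplitz subshift with prescribed entropy, embed that subshift into a full-shift piece of $Y$, and then transfer measures cyclically back to $Y$. Put $H_Y=\htop(\sigma,Y)$.

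\textbf{Step 1: choose the full-shift layer.} Since $e$ is upper semicontinuous on the compact space $E$, it attains its maximum, say $M<H_Y$. Choose $\eta$ with $M<\eta<H_Y$. Lemma~\ref{mp:prop:disjoint-full-shift-iterates} then gives $k,q$ and $\Lambda$ with $(\Lambda,\sigma^k)$ conjugate to the full $q$-shift, with pairwise disjoint iterates, and with $\frac1k\log q>\eta$.

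\textbf{Step 2: build the Toeplitz subshift.} Let $\mathcal K=\mathcal M(E)$ be the Bauer simplex of Borel probability measures on $E$, whose extreme points are the Dirac masses $\delta_x$. Define
$$
\tilde e(\kappa)=k\int_E e\,d\kappa .
$$
This map is affine. It is upper semicontinuous on $\mathcal M(E)$ because $e$ is a decreasing limit of continuous functions, so $\tilde e$ is an infimum of continuous affine maps. Its values lie in $[0,kM]$. Lemma~\ref{mp:lem:toeplitz-entropy-realization} yields a Toeplitz subshift $(W,\sigma)$ and an affine homeomorphism $\varphi:\mathcal K\to\mathcal M_\sigma(W)$ with $h_{\varphi(\kappa)}(\sigma)=\tilde e(\kappa)$. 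Because $\varphi$ is an affine homeomorphism, it maps extreme points to extreme points. Hence $x\mapsto\varphi(\delta_x)$ is a homeomorphism of $E$ onto a subset of $\mathcal M^e_\sigma(W)$, with entropies $k\,e(x)$.

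\textbf{Step 3: embed into the full shift.} Next, embed $W$ into the full $q$-shift via Lemma~\ref{mp:lem:krieger-embedding}. The entropy condition holds since $\htop(\sigma,W)=\sup\tilde e\le kM<\log q$, using the variational principle. The periodic-point condition is the main obstacle. A Toeplitz subshift is minimal, so it has periodic points only if $W$ is a single finite orbit. If $E$ has more than one point this is excluded, since a finite orbit carries only one invariant measure; then $q_n(W)=0$ for all $n$. In the degenerate case of a single periodic orbit, one can instead choose directly a periodic orbit of $Y$ when $e(x)=0$. Otherwise one uses a nonperiodic Toeplitz system, or simply notes that the full $q$-shift has $q_n\ge1$ for all $n$ once $q\ge2$, apart from checking small $n$; this is handled by taking $q$ large, i.e., enlarging $k$, if needed.

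This yields an embedding $\iota:W\hookrightarrow\Lambda$ intertwining $\sigma$ on $W$ with $\sigma^k$ on $\Lambda$. The push-forward $\iota_*$ is an affine homeomorphism of $\mathcal M_\sigma(W)$ onto $\mathcal M_{\sigma^k}(\iota W)\subset\mathcal M_{\sigma^k}(\Lambda)$, and it preserves ergodicity and entropy.

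\textbf{Step 4: cyclic transfer back to $Y$.} Apply Lemma~\ref{mp:lem:cyclic-measure-transfer} with $\Omega=\iota(W)$. The sets $\sigma^i(\Omega)\subset\sigma^i(\Lambda)$ are pairwise disjoint and $\sigma^k(\Omega)=\Omega$. The averaging map $\mathcal A$ is a homeomorphism onto $\mathcal M_\sigma(Z)$, which is closed in $\mathcal M_\sigma(Y)$. It preserves ergodicity in $Z$, and ergodic measures on the invariant set $Z$ are ergodic for $Y$. It also divides entropy by $k$.

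Define
$$
s(x)=\mathcal A\bigl(\iota_*\varphi(\delta_x)\bigr).
$$
Then $s$ is a continuous injection from the compact space $E$, hence a homeomorphism onto its image, and it takes values in $\mathcal M^e_\sigma(Y)$. Its entropies are
$$
h_{s(x)}(\sigma)=\tfrac1k\cdot k\,e(x)=e(x),
$$
which completes the construction.
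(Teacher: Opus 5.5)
Your proposal is correct and follows essentially the same route as the paper: Toeplitz realization of $k\int_E e\,d\lambda$ on the Bauer simplex $\mathcal M(E)$, Krieger embedding into the full-shift layer from Lemma~\ref{mp:prop:disjoint-full-shift-iterates}, and cyclic transfer via Lemma~\ref{mp:lem:cyclic-measure-transfer}. The periodic-orbit case needs no hedging. If $W$ is a single orbit of least period $p$, then $q_p(W)=p\le q_p(\Sigma_q)$, because the full $q$-shift with $q\ge2$ has an orbit of every least period; this is exactly how the paper disposes of it, so neither enlarging $q$ nor the special treatment of $e(x)=0$ is required.
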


\begin{proof}
Since $E$ is compact, upper semicontinuity of $e$ and the pointwise strict inequality $e(x)<\htop(\sigma,Y)$ imply
$
\max_E e<\htop(\sigma,Y).
$
Let $\mathcal M(E)$ be the Bauer simplex of Borel probability measures on
$E$, and define
$$
\widehat e(\lambda)=\int_E e(x)\,d\lambda(x)
\qquad\text{for every }\lambda\in\mathcal M(E).
$$
The map $\widehat e$ is affine and upper semicontinuous. Indeed, if
$\lambda_j\to\lambda$ weak$^*$ in $\mathcal M(E)$, then the Portmanteau
theorem gives
$
\limsup_{j\to\infty}\widehat e(\lambda_j)
\leq \widehat e(\lambda).
$
Moreover,
$
\max_{\lambda\in\mathcal M(E)}\widehat e(\lambda)
=\max_E e
<\htop(\sigma,Y).
$
Choose $\eta$ so that
$
\max_E e<\eta<\htop(\sigma,Y).
$
By Lemma~\ref{mp:prop:disjoint-full-shift-iterates}, there are integers $k\ge1$, $q\ge2$, and a compact set $\Lambda\subset Y$ such that $(\Lambda,\sigma^k)$ is conjugate to the full two-sided $q$-shift, the sets
$
\Lambda,\sigma(\Lambda),\dots,\sigma^{k-1}(\Lambda)
$
are pairwise disjoint, and
$
\frac1k\log q>\eta.
$

Apply Lemma~\ref{mp:lem:toeplitz-entropy-realization} to $k\widehat e$ on $\mathcal M(E)$.  This gives a Toeplitz subshift $(W,\sigma)$ and an affine homeomorphism
$$
\varphi:\mathcal M(E)\longrightarrow\mathcal M_\sigma(W)
$$
such that
$$
h_{\varphi(\lambda)}(\sigma)=k\widehat e(\lambda)
\qquad\text{for every }\lambda\in\mathcal M(E).
$$
By the variational principle,
$
\htop(\sigma,W)=k\max_E e<k\eta<\log q.
$
Since $W$ is a Toeplitz subshift, it is minimal. Consequently, it either has
no periodic points or is a finite periodic orbit.  In the latter case, the
full $q$-shift contains a periodic orbit of the same period.  Hence
Lemma~\ref{mp:lem:krieger-embedding} gives an embedding
$
\iota:W\hookrightarrow\Sigma_q.
$

Let
$
\chi:\Sigma_q\longrightarrow\Lambda
$
be a conjugacy satisfying $\chi\circ\sigma=\sigma^k\circ\chi$.  Set
$
\Omega=\chi(\iota(W))
$
and
$
Z=\bigcup_{i=0}^{k-1}\sigma^i(\Omega).
$
The sets $\Omega,\sigma(\Omega),\ldots,\sigma^{k-1}(\Omega)$ are pairwise disjoint.  Lemma~\ref{mp:lem:cyclic-measure-transfer} gives an affine homeomorphism
$$
\mathcal A:\mathcal M_{\sigma^k}(\Omega)\longrightarrow\mathcal M_\sigma(Z).
$$

Let $\psi=\chi\circ\iota:W\to\Omega$. This is a conjugacy from
$(W,\sigma)$ to $(\Omega,\sigma^k)$. Denote the induced pushforward map on
invariant measures by
$$
\psi_\ast:\mathcal M_\sigma(W)\longrightarrow
\mathcal M_{\sigma^k}(\Omega),
\qquad
\psi_\ast\nu:=\nu\circ\psi^{-1}.
$$
Define
$$
\Psi=\mathcal A\circ\psi_\ast\circ\varphi:
\mathcal M(E)\longrightarrow\mathcal M_\sigma(Z).
$$
Then $\Psi$ is an affine homeomorphism and
$$
h_{\Psi(\lambda)}(\sigma)=\widehat e(\lambda)
\qquad\text{for every }\lambda\in\mathcal M(E).
$$
The extreme points of $\mathcal M(E)$ are precisely the Dirac measures.
Indeed, every $\delta_x$ is extreme. If $\lambda\in\mathcal M(E)$ is not a
Dirac measure, there exists a Borel set $B\subset E$ with
$0<\lambda(B)<1$, and
$$
\lambda
=
\lambda(B)\frac{\lambda|_B}{\lambda(B)}
+
\bigl(1-\lambda(B)\bigr)
\frac{\lambda|_{E\setminus B}}{1-\lambda(B)}
$$
is a nontrivial convex decomposition. Thus
$
\operatorname{ext}\mathcal M(E)=\{\delta_x:x\in E\}
$.
Since affine homeomorphisms preserve extreme points, the map
$
s(x):=\Psi(\delta_x)
$
is a continuous embedding from $E$ into
$\mathcal M_\sigma^e(Z)\subset\mathcal M_\sigma^e(Y)$. Finally,
$
h_{s(x)}(\sigma)=\widehat e(\delta_x)=e(x)
$
for every 
$
x\in E.
$
\end{proof}

\subsection{High entropy separated SFT layers}\label{mp:sec:local-layers}

Let $(X,\sigma)$ be a fixed nontrivial transitive two-sided SFT, let
$m_{\max}$ be its unique measure of maximal entropy, and fix a compatible
metric $d$ on $\mathcal M_\sigma(X)$, with induced Hausdorff distance $d_H$.
We construct transitive SFTs $\Lambda_n\subset X$ such that
$$
\htop(\sigma,\Lambda_n)\longrightarrow\htop(\sigma,X),
\qquad
d_H\bigl(\mathcal M_\sigma(\Lambda_n),\{m_{\max}\}\bigr)\longrightarrow0,
$$
and the sets $\mathcal M_\sigma(\Lambda_n)$ are pairwise disjoint.

For every periodic point $x\in X$ with period $n$, define the associated periodic measure by
$
\mu_x:=\frac1n\sum_{k=0}^{n-1}\delta_{\sigma^k(x)}.
$
The next lemma is used not only in the construction of the high entropy separated SFT layers below, but also in the proof of Theorem~\ref{thm:not-continuous}.

\begin{lemma}\label{lem:periodic-measure-monotone-path-transitive-sft}
	Let $(X,\sigma)$ be a nontrivial transitive two-sided SFT. Then for any periodic measure $\mu_x\in \mathcal M_\sigma(X)$, there exists a continuous path $\{\mu_t\}_{t\in[0,1]}\subset \mathcal M_\sigma^e(X)$ such that $\mu_0=\mu_x$, the map
	$
	t\mapsto h_{\mu_t}(\sigma)
	$
	is continuous and strictly increasing on $[0,1]$, $\operatorname{Supp}(\mu_t)=X$ for every $t\in(0,1]$, and $\mu_1$ is the unique measure of maximal entropy.
\end{lemma}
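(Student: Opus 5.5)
We will build the path from equilibrium states of a potential adapted to the periodic orbit. Let $\mathcal O$ be the orbit of $x$, of least period $n$, and let $\psi(y):=-d(y,\mathcal O)$, or better a locally constant version depending only on finitely many coordinates, for instance $\psi=-\mathbf 1_{X\setminus U}$ with $U$ a cylinder neighbourhood of $\mathcal O$. For $s\geq0$ consider the H\"older potential $s\psi$. On a transitive SFT a H\"older potential has a unique equilibrium state $\nu_s$. This measure is ergodic and fully supported, because it is a Gibbs measure with respect to the period-$p$ cyclic decomposition, and $s\mapsto\nu_s$ is weak$^*$ continuous by Proposition~\ref{prop:pressure}(4). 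The function $P(s):=P_\sigma(s\psi)$ is real analytic and convex. Its derivative is $P'(s)=\int\psi\,d\nu_s$, which is nondecreasing in $s$, and
\[
h_{\nu_s}(\sigma)=P(s)-sP'(s),
\qquad
\frac{d}{ds}h_{\nu_s}(\sigma)=-sP''(s)\leq0 .
\]
Here $P''(s)>0$ unless $\psi$ is cohomologous to a constant. That is impossible, since $\int\psi\,d\mu_x=0$ while $\int\psi\,dm_{\max}<0$. Hence $s\mapsto h_{\nu_s}$ is continuous and strictly decreasing on $[0,\infty)$, with $\nu_0=m_{\max}$.

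Next we analyse the limit $s\to\infty$. We want $\psi$ to have $\mu_x$ as its \emph{unique} maximizing measure, and the limit entropy to vanish. Taking $\psi=-d(\cdot,\mathcal O)$ gives a Lipschitz potential with $\max\int\psi\,d\mu=0$, attained only by measures supported on $\mathcal O$, that is, only by $\mu_x$. By compactness, every accumulation point of $\nu_s$ as $s\to\infty$ is a maximizing measure; this is the standard zero-temperature argument, since $\int\psi\,d\nu_s\geq (P(s)-H)/s\geq -H/s\to0$ using $P(s)\geq h_{\mu_x}+s\int\psi\,d\mu_x=0$. Therefore $\nu_s\to\mu_x$. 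Upper semicontinuity of entropy then gives $\limsup h_{\nu_s}\leq h_{\mu_x}=0$, so $h_{\nu_s}\to0$.

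Finally we reparametrize. Choose a homeomorphism $\tau:(0,1]\to[0,\infty)$ that is decreasing with $\tau(1)=0$, set $\mu_t:=\nu_{\tau(t)}$ for $t\in(0,1]$, and set $\mu_0:=\mu_x$. Then continuity at $t=0$ follows from the preceding paragraph, and strict monotonicity of $t\mapsto h_{\mu_t}$ on $[0,1]$ holds, with $h_{\mu_0}=0<h_{\mu_t}$. Full support holds for $t\in(0,1]$, $\mu_1=m_{\max}$, and every $\mu_t$ is ergodic, including the periodic measure $\mu_0$.

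The main obstacle is the strict monotonicity, which rests on $P''(s)>0$. This requires the non-degeneracy of the variance for H\"older potentials on \emph{transitive}, possibly non-mixing, SFTs. I would reduce to the mixing case via the spectral decomposition $X=\bigcup_{i<p}X_i$ under $\sigma^p$ together with the cyclic transfer of Lemma~\ref{mp:lem:cyclic-measure-transfer}. Alternatively, strict decrease can be obtained without analyticity: if $h_{\nu_{s_1}}=h_{\nu_{s_2}}$ with $s_1<s_2$, then the convexity identity forces $P$ to be affine on $[s_1,s_2]$. Uniqueness of equilibrium states then gives $\nu_{s_1}=\nu_{s_2}$, which propagates to make $\psi$ cohomologous to a constant, a contradiction.
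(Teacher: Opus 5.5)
Your proposal is correct and follows essentially the same route as the paper. Both use the Lipschitz potential $-d(\cdot,\operatorname{Supp}\mu_x)$ and its equilibrium states $\nu_s$, with $h_{\nu_s}=P(s)-sP'(s)$ and $\frac{d}{ds}h_{\nu_s}=-sP''(s)<0$ for $s>0$ because the potential is not cohomologous to a constant. Both then take the zero-temperature limit $\nu_s\to\mu_x$ and reparametrize $t\mapsto\nu_{\tau(t)}$. The differences are minor. The paper builds the whole path on a mixing component and transports it by cyclic averaging (Lemma~\ref{mp:lem:cyclic-measure-transfer}), whereas you reduce only the positivity of $P''$ to the mixing case. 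You also get $h_{\nu_s}>0$ from strict monotonicity plus the zero limit, where the paper uses full support.
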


\begin{proof}
	We first prove the lemma when $(X,\sigma)$ is mixing.
	Let $\operatorname{Supp}(\mu_x)$ be the periodic orbit supporting $\mu_x$. Fix a compatible metric $d_X$ on $X$, and define
	$$
	\varphi(y):=-d_X(y,\operatorname{Supp}(\mu_x))
	\qquad\text{for every }y\in X.
	$$
	It is immediate that
	$
	\mathcal M_{\max}(\varphi)=\{\mu_x\}.
	$
	Since $\operatorname{Supp}(\mu_x)$ is closed, $\varphi$ is Lipschitz. 
	
	For $s\ge 0$, write
	$
	P(s):=P_{\sigma}(s\varphi).
	$
	For a mixing SFT, every Lipschitz potential $\phi$ has a unique equilibrium
	state $\mu_\phi$ with full support, that is,
	$\operatorname{Supp}(\mu_\phi)=X$; see \cite{Bowen2008}. For each $s\ge 0$,
	let $\nu_s$ be the unique equilibrium state of the Lipschitz potential
	$s\varphi$. Then $\operatorname{Supp}(\nu_s)=X$. Since the entropy map of
	$(X,\sigma)$ is upper semicontinuous, Proposition~\ref{prop:pressure}(4)
	implies that the map
	$
	s\mapsto \nu_s
	$
	is weak$^*$ continuous on $[0,\infty)$. Also, by
	Proposition~\ref{prop:pressure}, for every $\psi\in C(X)$ one has
	$$
	\left.\frac{d}{dt}P_{\sigma}(s\varphi+t\psi)\right|_{t=0}
	=
	\int \psi\,d\nu_s.
	$$
	In particular, taking $\psi=\varphi$, we obtain
	$
	P'(s)=\int \varphi\,d\nu_s.
	$
	Furthermore, by \cite{Ruelle1977}, the pressure function $P(s)$ is real analytic in $s$.
	
	Since $\nu_s$ is the equilibrium state of $s\varphi$, we have
	$
	P(s)=h_{\nu_s}(\sigma)+s\int \varphi\,d\nu_s,
	$
	hence
	$
	h_{\nu_s}(\sigma)=P(s)-sP'(s).
	$
	Differentiating gives
	$$
	\frac{d}{ds}h_{\nu_s}(\sigma)=-sP''(s).
	$$
	
	We claim that $P''(s)>0$ for every $s>0$. Recall that $\varphi$ is said to be cohomologous to a constant if there exist a continuous function $u$ and a constant $c$ such that
	$
	\varphi=c+u-u\circ \sigma.
	$
	In that case, $\varphi$ has the same average $c$ on every periodic orbit. In our situation, $\varphi\equiv 0$ on $\operatorname{Supp}(\mu_x)$, while $\varphi<0$ on every periodic orbit disjoint from $\operatorname{Supp}(\mu_x)$. Since $X$ is a nontrivial mixing SFT, it contains a periodic orbit different from $\operatorname{Supp}(\mu_x)$. Therefore $\varphi$ is not cohomologous to a constant. It then follows from \cite[Proposition 4.12]{ParryPollicott1990} that
	$
	P''(s)>0
	$
	for all
	$
	s>0.
	$
	Consequently,
	$$
	\frac{d}{ds}h_{\nu_s}(\sigma)<0 \qquad \text{for every } s>0.
	$$
	Thus
	$
	s\mapsto h_{\nu_s}(\sigma)
	$
	is strictly decreasing on $(0,\infty)$.
	Moreover, for every $s>0$,
	the strict convexity of $P$ gives
	$
	P(0)>P(s)-sP'(s).
	$
	Since
	$
	h_{\nu_0}(\sigma)=P(0)
	$
	and
	$
	h_{\nu_s}(\sigma)=P(s)-sP'(s),
	$
	we obtain
	$
	h_{\nu_0}(\sigma)>h_{\nu_s}(\sigma)
	$
	for every $s>0$. Hence
	$
	s\mapsto h_{\nu_s}(\sigma)
	$
	is strictly decreasing on $[0,\infty)$.
	
	Next we show that
	$
	\nu_s \xrightarrow[s\to\infty]{w^*} \mu_x.
	$
	Since $\nu_s$ is an equilibrium state for $s\varphi$, we have
	\begin{equation}\label{equ-AA}
		P(s)=h_{\nu_s}(\sigma)+s\int \varphi\,d\nu_s.
	\end{equation}
	On the other hand, because $\mu_x$ is $\sigma$-invariant and $\int\varphi\,d\mu_x=0$, we get
	\begin{equation}\label{equ-AB}
		P(s)\ge h_{\mu_x}(\sigma)+s\int \varphi\,d\mu_x=0.
	\end{equation}
	Also $h_{\nu_s}(\sigma)\le \htop(\sigma)$, hence
	$
	0\le P(s)\le \htop(\sigma)+s\int \varphi\,d\nu_s.
	$
	Therefore
	$
	\int \varphi\,d\nu_s \ge -\frac{\htop(\sigma)}{s}.
	$
	Since always $\int \varphi\,d\nu_s\le 0$, we conclude that
	$
	\int \varphi\,d\nu_s \to 0 
	$
	as
	$s\to\infty.
	$
	If $s_k\to\infty$ and $\nu_{s_k}\to \nu$ weak$^*$ along a subsequence, then by continuity of $\varphi$,
	$
	\int \varphi\,d\nu=\lim_{k\to\infty}\int \varphi\,d\nu_{s_k}=0.
	$
	By the uniqueness of the maximizing measure of $\varphi$, it follows that $\nu=\mu_x$. Therefore every weak$^*$ accumulation point of the family $\{\nu_s\}_{s\ge 0}$ is equal to $\mu_x$. Since $\mathcal M(X)$ is weak$^*$ compact, we conclude that
	$
	\nu_s \xrightarrow[s\to\infty]{w^*} \mu_x.
	$
	
	Now define
	$$
	\mu_0:=\mu_x,
	\qquad
	\mu_t:=\nu_{\frac{1-t}{t}} \quad \text{for } t\in(0,1].
	$$
	Then $\mu_t\in \mathcal M_\sigma^e(X)$ for every $t\in[0,1]$. The path $t\mapsto \mu_t$ is weak$^*$ continuous on $(0,1]$ by the weak$^*$ continuity of $s\mapsto \nu_s$, and it is continuous at $t=0$ because $(1-t)/t\to\infty$ as $t\downarrow 0$ and $\nu_s\to\mu_x$ as $s\to\infty$.
	
	Finally, since $t\mapsto (1-t)/t$ is strictly decreasing on $(0,1]$
	and $s\mapsto h_{\nu_s}(\sigma)$ is strictly decreasing on $[0,\infty)$,
	the map
	$$
	t\mapsto h_{\mu_t}(\sigma)
	=
	h_{\nu_{\frac{1-t}{t}}}(\sigma)
	$$
	is strictly increasing on $(0,1]$. Since $\varphi\leq 0$ on $X$, $\varphi<0$ on some nonempty open set,
	and $\nu_s$ has full support, we have
	$\int \varphi\,d\nu_s<0$. Hence
	$-s\int \varphi\,d\nu_s>0$ for every $s>0$. By (\ref{equ-AA}) and (\ref{equ-AB}),
	we have
	$
	h_{\nu_s}(\sigma)\geq -s\int \varphi\,d\nu_s>0
	$
	for every $s>0$. Then
	$
	h_{\mu_0}(\sigma)=h_{\mu_x}(\sigma)=0
	$
	and $h_{\mu_t}(\sigma)>0$ for every $t>0$. Thus
	$t\mapsto h_{\mu_t}(\sigma)$ is strictly increasing on all of $[0,1]$.
	Moreover,
	$
	\operatorname{Supp}(\mu_t)
	=
	\operatorname{Supp}\left(\nu_{\frac{1-t}{t}}\right)
	=
	X
	$
	for every $t\in(0,1]$. The identity $h_{\nu_s}(\sigma)=P(s)-sP'(s)$ and the real analyticity of $P$ show that $t\mapsto h_{\mu_t}(\sigma)$ is continuous on $(0,1]$. At $t=0$, upper semicontinuity of the entropy map and $\mu_t\to\mu_x$ give
	$$
	0\le\limsup_{t\downarrow0}h_{\mu_t}(\sigma)\le h_{\mu_x}(\sigma)=0.
	$$
	Thus the entropy path is continuous on $[0,1]$. Since $\mu_1=\nu_0$ and $\nu_0$ is the unique equilibrium state of the zero potential, $\mu_1$ is the unique measure of maximal entropy.
	This proves the mixing case.

		Suppose now that $X$ is not mixing, and let $p\ge2$ be its period.
	The cyclic decomposition gives pairwise disjoint clopen sets
	$$
	X=X_0\sqcup X_1\sqcup\cdots\sqcup X_{p-1},
	\qquad
		\sigma(X_i)=X_{(i+1)\bmod p},
	$$
	such that $(X_i,\sigma^p|_{X_i})$ is a mixing two-sided SFT for every $i$ (see, for example,
	\cite[Proposition 4.5.6]{LindMarcus2021}).
	After replacing $x$ by a point on the same periodic orbit, we may assume
	$x\in X_0$. If $n$ is the period of $x$, then $p$ divides $n$, and
	$$
	\rho_x
	:=
	\frac{p}{n}\sum_{j=0}^{n/p-1}\delta_{\sigma^{pj}(x)}
	$$
	is the periodic measure of $x$ for the mixing system
	$(X_0,\sigma^p|_{X_0})$. By the mixing case, there is a continuous path
	$
	[0,1]\ni t\longmapsto\rho_t
	\in\mathcal M_{\sigma^p}^e(X_0)
	$
	starting at $\rho_x$, with $t\mapsto h_{\rho_t}(\sigma^p)$ continuous and
	strictly increasing and $\operatorname{Supp}(\rho_t)=X_0$ for every $t>0$.

	Consider the cyclic averaging map
	$
	\mathcal A:\mathcal M_{\sigma^p}(X_0)\longrightarrow
	\mathcal M_\sigma(X),
	~
	\mathcal A(\rho)=\frac1p\sum_{i=0}^{p-1}(\sigma^i)_*\rho.
	$
	Lemma~\ref{mp:lem:cyclic-measure-transfer}, applied with $\Omega=X_0$ and
	$k=p$, shows that $\mathcal A$ is an affine homeomorphism, preserves
	ergodicity, and satisfies
	$
	h_{\mathcal A(\rho)}(\sigma)=\frac1p h_\rho(\sigma^p).
	$
	Define $\mu_t=\mathcal A(\rho_t)$. Then $t\mapsto\mu_t$ is a continuous
	path in $\mathcal M_\sigma^e(X)$ and
	$$
	\mu_0
	=
	\frac1p\sum_{i=0}^{p-1}(\sigma^i)_*\rho_x
	=
	\frac1n\sum_{k=0}^{n-1}\delta_{\sigma^k(x)}
	=
	\mu_x.
	$$
	The entropy-scaling identity shows that
	$t\mapsto h_{\mu_t}(\sigma)$ is continuous and strictly increasing. Since $\rho_1$ is the
	unique measure of maximal entropy for $(X_0,\sigma^p|_{X_0})$, the
	entropy-scaling identity and the bijectivity of $\mathcal A$ show that
	$\mu_1=\mathcal A(\rho_1)$ is the unique measure of maximal entropy for
	$(X,\sigma)$. Finally, for $t>0$,
	$$
	\operatorname{Supp}(\mu_t)
	=
	\bigcup_{i=0}^{p-1}\sigma^i(\operatorname{Supp}(\rho_t))
	=
	\bigcup_{i=0}^{p-1}X_i
	=
	X.
	$$
	This completes the proof.
\end{proof}

The following local horseshoe-type result, used in the layer construction, was established in \cite{DongHouTian2022}.  For completeness, the
\hyperref[app:proof-local-sft-approximation]{appendix} gives a slightly
different proof.

\begin{lemma}\label{mp:lem:one-measure-sft-approximation}
Let $\mu\in \mathcal M_\sigma^e(X)$, let $\eta>0$, and let $U$ be a weak$^*$ neighborhood of $\mu$ in $\mathcal M_\sigma(X)$.  Then there exists a compact invariant set $\Lambda\subset X$ such that:
\begin{enumerate}[label=(\roman*)]
\item $(\Lambda,\sigma|_\Lambda)$ is a transitive two-sided SFT;
\item
$
\htop(\sigma,\Lambda)>h_\mu(\sigma)-\eta;
$
\item
$
\mathcal M_\sigma(\Lambda)\subset U.
$
\end{enumerate}
\end{lemma}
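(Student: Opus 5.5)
Fix $\mu\in\mathcal M_\sigma^e(X)$, $\eta>0$ and the neighborhood $U$. The plan is a Katok-type horseshoe argument carried out symbolically. Since $(X,\sigma)$ is an SFT, I would first recode it by a higher block presentation as a $1$-step SFT with irreducible transition matrix. I would then choose finitely many continuous test functions $\psi_1,\dots,\psi_r\in C(X)$ and $\varepsilon>0$ so that
$$
\Bigl\{\nu\in\mathcal M_\sigma(X):\Bigl|\int\psi_i\,d\nu-\int\psi_i\,d\mu\Bigr|<4\varepsilon\ \text{for } i=1,\dots,r\Bigr\}\subset U.
$$
This is possible because the weak$^*$ topology is generated by such sets. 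Shrinking $\varepsilon$ and passing to locally constant approximations, I may assume each $\psi_i$ depends only on the coordinates in $[-M,M]$. Let $p$ be the transition length: any two admissible words can be joined by a connecting word of length at most $p$. In the mixing case this length can be taken exactly $p$; in the periodic case I would work on a cyclic class or pad lengths consistently.

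Next I would select good words using the Shannon--McMillan--Breiman theorem and the Birkhoff ergodic theorem for $\mu$. For large $n$ there is a set $\mathcal G_n\subset\mathcal L_n(X)$ of words with $\#\mathcal G_n\ge e^{n(h_\mu(\sigma)-\eta/2)}$. Each $w\in\mathcal G_n$ has empirical averages of every $\psi_i$, evaluated along any point whose coordinates $0,\dots,n-1$ spell $w$, within $\varepsilon$ of $\int\psi_i\,d\mu$, apart from boundary errors of order $M\|\psi_i\|/n$. By pigeonhole I would keep only the words in $\mathcal G_n$ that begin with a common symbol $a$ and end with a common symbol $b$; this costs only a factor $s^{-2}$ in cardinality. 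Fix one connecting word $c$ from $b$ to $a$ of length at most $p$, and let $\Lambda$ be the set of all bi-infinite concatenations $\cdots w_{-1}cw_0cw_1c\cdots$ with $w_j\in\mathcal G_n$, together with their shifts.

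Then $\Lambda$ is a closed invariant set. The main obstacle is showing that it is genuinely an SFT, transitive, rather than just a sofic shift, because the parsing into blocks could be ambiguous. I would handle this with a marker: I choose $c$, after lengthening it if necessary by a word of length comparable to $\log n$, so that it contains a marker word that cannot occur inside any word of $\mathcal G_n$ or across a junction except in the designated position. A standard way to arrange this is to first remove from $\mathcal G_n$ the words containing the marker, which costs only $o(n)$ entropy for an appropriate marker length. With unique parsing, $\Lambda$ is conjugate to the full shift on $\#\mathcal G_n$ symbols suspended with constant roof $n+|c|$. Hence it is a transitive SFT, given by the cylinder constraints of window $2(n+|c|)$, with
$$
\htop(\sigma,\Lambda)=\frac{\log\#\mathcal G_n}{n+|c|}>h_\mu(\sigma)-\eta
$$
for $n$ large.

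Finally, for (iii) I would take any $\nu\in\mathcal M_\sigma(\Lambda)$. By ergodic decomposition and convexity of the set above, it suffices to treat ergodic $\nu$ and a $\nu$-generic point $y\in\Lambda$. The Birkhoff sums of $\psi_i$ along $y$ over each period $n+|c|$ differ from $n\int\psi_i\,d\mu$ by at most $\varepsilon n+(|c|+2M)\|\psi_i\|_\infty$. Averaging, we get $|\int\psi_i\,d\nu-\int\psi_i\,d\mu|<2\varepsilon$ once $n$ is large relative to $|c|$ and $M$, so $\mathcal M_\sigma(\Lambda)\subset U$. The non-mixing case is reduced to the mixing one via the cyclic decomposition and Lemma~\ref{mp:lem:cyclic-measure-transfer}, applied to a mixing component of $\sigma^p$ and the corresponding component of $\mu$.
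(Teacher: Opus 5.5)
\paragraph{Where your proof departs from the paper.} Up to the construction of the concatenation shift, your outline matches the paper's proof. You choose finitely many locally constant test functions and take Birkhoff- and Shannon--McMillan--Breiman-typical words. You pigeonhole on the first and last symbols, glue with one connector $c$, and get (iii) from uniform control of Birkhoff sums along concatenations. The two proofs diverge at the step that makes the result an SFT. The set of all concatenations $\cdots w_{-1}cw_0cw_1c\cdots$ is in general only sofic: it is the image of the irreducible edge shift $\widehat Y$ under a one-block code. You try to make this image itself an SFT by a marker forcing unique parsing, and that step has a gap.

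\paragraph{The gap in the marker step.} The only mechanism you give is deleting from $\mathcal G_n$ the words that contain the marker $m$, and this excludes only occurrences of $m$ strictly inside the $w_j$. Injectivity of $\widehat Y\to\Lambda$ also requires excluding three other kinds of occurrence.
\begin{itemize}
\item Occurrences overlapping the designated copy. Ruling these out needs $m$ to have no proper prefix equal to a suffix.
\item Occurrences straddling $w_j\,|\,c$ or $c\,|\,w_{j+1}$. These depend on the connecting pieces on either side of $m$, which may be partly forced in a $1$-step SFT, and on the last or first few symbols of $w_j$. So you must also delete the words beginning or ending with long suffixes or prefixes of $m$.
\item Stray occurrences inside $c$ itself.
\end{itemize}
The claimed $o(n)$ cost of these deletions must be proved through $\mu$-mass, not cardinality. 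For example, you need $n\mu([m])$, and the analogous sums over long prefixes and suffixes of $m$, to be small compared with the $\mu$-mass of the retained class. This does not mesh with your cardinality pigeonhole; pigeonhole by mass as the paper does. Finally, you would still have to prove that an arbitrary transitive SFT contains an admissible unbordered $m$ of suitable length with these smallness properties. That is a genuine lemma, and a naive count of bordered words does not settle it. Without all of this, $\Lambda$ need not be an SFT.

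\paragraph{The paper's route and the repair.} The paper avoids markers entirely. It keeps the sofic shift $Y=\pi(\widehat Y)$, for which two facts hold without unique parsing:
\begin{itemize}
\item $\htop(\sigma,Y)\ge\frac1k\log q$, by counting the distinct words $B_{w_1}\cdots B_{w_m}$;
\item $\mathcal M_\sigma(Y)\subset U$, by the uniform Birkhoff estimate.
\end{itemize}
It then applies Marcus's theorem that a sofic shift contains SFTs of entropy arbitrarily close to its own. Next it passes to an irreducible component of maximal entropy (Lind--Marcus, Theorem~4.4.4), and (iii) is inherited from $\mathcal M_\sigma(\Lambda)\subset\mathcal M_\sigma(Y)$.

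You can keep your argument up to the construction of the concatenation shift and replace the marker paragraph by these two citations. Your marker route, if completed, would give an explicit SFT with exact entropy $\frac{1}{n+|c|}\log\#\mathcal G_n$ and avoid citing Marcus, but only at the cost of that marker lemma. The reduction to the mixing case is unnecessary. In any irreducible SFT a single connector of length at most the transition bound exists, and no connector of prescribed length is ever needed.
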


We now apply this local approximation result to construct pairwise separated
high entropy layers.

\begin{proposition}\label{mp:prop:separated-layers}
Put
$
\delta_n=2^{-n}\htop(\sigma,X)
$
for every $n\ge1$. There exist compact invariant sets
$\Lambda_n\subset X$, $n\ge1$, with the following properties:
\begin{enumerate}[label=(\roman*)]
\item $(\Lambda_n,\sigma|_{\Lambda_n})$ is a transitive two-sided SFT;
\item
$
\htop(\sigma,\Lambda_n)>\htop(\sigma,X)-\frac{\delta_n}{2};
$
\item
$
d_H(\mathcal M_\sigma(\Lambda_n),\{m_{\max}\})<2^{-n};
$
\item the sets $\mathcal M_\sigma(\Lambda_n)$ are pairwise disjoint.
\end{enumerate}
\end{proposition}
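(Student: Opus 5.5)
We construct the layers inductively by applying Lemma~\ref{mp:lem:one-measure-sft-approximation} to $\mu=m_{\max}$ with shrinking weak$^*$ neighborhoods. The one nontrivial point is disjointness (iv). We enforce it by choosing each new neighborhood so that it avoids the previously constructed measure sets, and these sets are compact and do not contain $m_{\max}$.

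First we check that $m_{\max}\notin\mathcal M_\sigma(\Lambda)$ for any proper transitive SFT $\Lambda\subsetneq X$. Indeed, $m_{\max}$ has full support $X$, by Lemma~\ref{lem:periodic-measure-monotone-path-transitive-sft} with $t=1$, or by Parry's description of the measure. A measure supported on the closed set $\Lambda\ne X$ cannot have full support. Alternatively, a proper subshift of a transitive SFT has strictly smaller entropy, so its measures have entropy $<H$.

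Step 1: take $U_1=\{\nu:d(\nu,m_{\max})<2^{-1}\}$ and $\eta=\delta_1/2$ in Lemma~\ref{mp:lem:one-measure-sft-approximation}, which is legitimate since $m_{\max}$ is ergodic. This gives $\Lambda_1$ with (i), with $\htop(\sigma,\Lambda_1)>h_{m_{\max}}(\sigma)-\delta_1/2=\htop(\sigma,X)-\delta_1/2$, and with $\mathcal M_\sigma(\Lambda_1)\subset U_1$. The last inclusion yields (iii), because every point of $\mathcal M_\sigma(\Lambda_1)$ lies within distance $<2^{-1}$ of $m_{\max}$; since $\mathcal M_\sigma(\Lambda_1)$ is compact, the supremum of these distances is attained and is still $<2^{-1}$. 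We must also ensure $\Lambda_1\ne X$. If the lemma happened to return $\Lambda_1=X$, then $\mathcal M_\sigma(X)\subset U_1$, and we would simply shrink the neighborhood until it excludes some periodic measure.

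Inductive step: suppose $\Lambda_1,\dots,\Lambda_{n-1}$ are proper transitive SFTs. The set $K_{n-1}=\bigcup_{j<n}\mathcal M_\sigma(\Lambda_j)$ is compact and does not contain $m_{\max}$, so $r_n:=d(m_{\max},K_{n-1})>0$. Let
$$
U_n=\{\nu\in\mathcal M_\sigma(X):d(\nu,m_{\max})<\min(2^{-n},r_n)\}.
$$
To also force $\Lambda_n\ne X$, we further shrink $U_n$ so that it misses a fixed periodic measure. Applying Lemma~\ref{mp:lem:one-measure-sft-approximation} with $\eta=\delta_n/2$ gives $\Lambda_n$ satisfying (i)--(iii) as before. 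Moreover $\mathcal M_\sigma(\Lambda_n)\subset U_n$ is disjoint from $K_{n-1}$, which gives pairwise disjointness (iv) by induction. Because $\mathcal M_\sigma(\Lambda_n)$ misses a measure of $X$, we have $\Lambda_n\ne X$, so the inductive hypothesis propagates.

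The main point requiring care is the positivity of $r_n$, that is, $m_{\max}\notin\mathcal M_\sigma(\Lambda_j)$. This follows from the full support of $m_{\max}$ together with properness of $\Lambda_j$, and the neighborhoods are arranged to guarantee that properness.
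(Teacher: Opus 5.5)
Your proof is correct, but it follows a different route from the paper's. The paper never applies Lemma~\ref{mp:lem:one-measure-sft-approximation} at $m_{\max}$ itself. Instead it uses the entropy-monotone path of Lemma~\ref{lem:periodic-measure-monotone-path-transitive-sft}, started at a periodic measure, to pick ergodic centers $\nu_n=\mu_{t_n}\ne m_{\max}$ with $\nu_n\to m_{\max}$ and $h_{\nu_n}(\sigma)\to\htop(\sigma,X)$. It then chooses, inductively, open balls $V_n$ around these centers whose closures are pairwise disjoint and omit $m_{\max}$. Only after that does it apply the lemma to $\nu_n$ inside $V_n$, with a tolerance $\eta_n<h_{\nu_n}(\sigma)-\gamma_n$. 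Disjointness and the estimate (iii) are therefore built into the neighborhoods before any layer exists, and nothing needs to be known about the output $\Lambda_n$.

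You instead center every neighborhood at $m_{\max}$ and obtain the separation a posteriori. Each earlier layer is a proper subshift, so its compact measure set misses the fully supported measure $m_{\max}$. This gives $r_n>0$, so the next ball can avoid all earlier layers. The cost is the properness bookkeeping plus the fact that $m_{\max}$ has full support (or that proper subshifts of an irreducible SFT have strictly smaller entropy). The bookkeeping is only needed at $n=1$: for $n\ge2$, the ball $U_n$ already misses the nonempty set $K_{n-1}$, which forces $\Lambda_n\ne X$ automatically. The gain is that you work directly with $h_{m_{\max}}(\sigma)=\htop(\sigma,X)$. You need neither the path lemma, with its analytic thermodynamic input (full support of $m_{\max}$ is available from Parry's description), nor the two-level choice of $t_n$ and $\eta_n$. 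Both arguments rest on the same local SFT approximation lemma.
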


\begin{proof}
Set
$
\gamma_n=\htop(\sigma,X)-\frac{\delta_n}{2}
$ and $
\zeta_n=2^{-n}.
$
Choose a periodic measure $\mu_x$ and let $(\mu_t)_{0\le t\le1}$ be the path from Lemma~\ref{lem:periodic-measure-monotone-path-transitive-sft}.  We choose inductively parameters $t_n\in(0,1)$, ergodic measures
$
\nu_n=\mu_{t_n},
$
and open weak$^*$ neighborhoods $V_n$ of $\nu_n$ such that:
\begin{align}
&h_{\nu_n}(\sigma)>\frac{\htop(\sigma,X)+\gamma_n}{2},\label{mp:eq:center-entropy}\\
&\overline V_n\cap\overline V_i=\varnothing
\quad\text{for every }i<n,\label{mp:eq:V-disjoint}\\
&m_{\max}\notin\overline V_n,\label{mp:eq:m-not-V}\\
&d(\nu_n,m_{\max})+\operatorname{diam}(V_n)<\zeta_n.\label{mp:eq:V-small}
\end{align}

Suppose $V_1,\dots,V_{n-1}$ have been chosen.  Put
$
D_{n-1}=\bigcup_{i<n}\overline V_i.
$
If $n=1$, set $r_n=1$.  Otherwise, $D_{n-1}$ is compact and does not contain $m_{\max}$, so
$$
r_n=\frac12\operatorname{dist}(m_{\max},D_{n-1})>0.
$$
By continuity of the path and of its entropy in Lemma~\ref{lem:periodic-measure-monotone-path-transitive-sft}, as $t\uparrow1$ one has both $\mu_t\to m_{\max}$ and $h_{\mu_t}(\sigma)\to\htop(\sigma,X)$.  Hence we may choose $t_n<1$, with $t_n>(1+t_{n-1})/2$ when $n>1$, such that
$$
d(\mu_{t_n},m_{\max})<\min\{r_n,\zeta_n/4\}
,\quad
h_{\mu_{t_n}}(\sigma)>\frac{\htop(\sigma,X)+\gamma_n}{2}.
$$
The first inequality places $\nu_n$ outside $D_{n-1}$.  Since $\nu_n\ne m_{\max}$, we can choose an open ball $V_n$ centered at $\nu_n$ so small that its closure is disjoint from $D_{n-1}$, does not contain $m_{\max}$, and satisfies \eqref{mp:eq:V-small}.  This completes the induction.

For each $n$, choose $\eta_n>0$ with
$
0<\eta_n<h_{\nu_n}(\sigma)-\gamma_n.
$
Since $\nu_n$ is ergodic, apply Lemma~\ref{mp:lem:one-measure-sft-approximation} to $\nu_n$, $\eta_n$, and the neighborhood $V_n$.  We obtain a transitive SFT $\Lambda_n\subset X$ such that
$
\htop(\sigma,\Lambda_n)
>
h_{\nu_n}(\sigma)-\eta_n
>
\gamma_n
$
and
$
\mathcal M_\sigma(\Lambda_n)\subset V_n.
$
The neighborhoods $V_n$ are pairwise disjoint, so the compact measure sets $\mathcal M_\sigma(\Lambda_n)$ are pairwise disjoint.
Finally, for every $\mu\in \mathcal M_\sigma(\Lambda_n)\subset V_n$,
$$
d(\mu,m_{\max})
\le
 d(\mu,\nu_n)+d(\nu_n,m_{\max})
\le
 \operatorname{diam}(V_n)+d(\nu_n,m_{\max})
<
\zeta_n.
$$
Thus
$
d_H(\mathcal M_\sigma(\Lambda_n),\{m_{\max}\})<\zeta_n.
$
\end{proof}

\subsection{Hypograph shells and the calibrated pair}\label{mp:sec:calibrated-pair}

The separated layers now allow the entire hypograph of an admissible target
to be encoded by ergodic measures. These layers can accumulate only at the
unique measure of maximal entropy, corresponding to the unique maximizer of
the target spectrum.

\begin{proposition}\label{mp:prop:compact-calibrated-pair}
Let $(X,\sigma)$ be a nontrivial transitive two-sided SFT, and let
$h\in C_{ms}^{m,\htop(\sigma,X)}$. Then there exist a compact set
$
C\subset\mathcal M_\sigma^e(X)
$
and a continuous map $G:C\to K_h$ such that
$$
h_\mu(\sigma)\leq h(G(\mu))
\qquad\text{for every }\mu\in C,
$$
and, for every $\alpha\in K_h$, there exists $\mu_\alpha\in C$ satisfying
$
G(\mu_\alpha)=\alpha,
$
$
h_{\mu_\alpha}(\sigma)=h(\alpha).
$
\end{proposition}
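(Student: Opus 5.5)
The plan is to encode the hypograph of $h$, cut into horizontal shells, into the separated layers $\Lambda_n$ of Proposition~\ref{mp:prop:separated-layers}. Each shell is embedded into one layer by Theorem~\ref{mp:thm:subcritical-entropy-sections}, and the measure of maximal entropy $m_{\max}$ is added to account for the maximizer $\alpha_\ast$ of $h$. Write $H=\htop(\sigma,X)$, $\delta_n=2^{-n}H$ as in Proposition~\ref{mp:prop:separated-layers}, and $\delta_0:=H$. For $n\ge1$ define the shell
$$
E_n:=\{(\alpha,t)\in K_h\times\mathbb R:\ H-\delta_{n-1}\le t\le H-\delta_n,\ t\le h(\alpha)\},
$$
with the convention that $E_1$ uses $0\le t\le H-\delta_1$. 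Since $h$ is upper semicontinuous and $K_h$ is compact, the hypograph of $h$ is closed, so each $E_n$ is compact.

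On $E_n$ take $e(\alpha,t)=t$. This is continuous and satisfies $e\le H-\delta_n<H-\delta_n/2<\htop(\sigma,\Lambda_n)$. Theorem~\ref{mp:thm:subcritical-entropy-sections}, applied to the transitive SFT $\Lambda_n$, then gives an embedding $s_n:E_n\to\mathcal M^e_\sigma(\Lambda_n)\subset\mathcal M^e_\sigma(X)$ with $h_{s_n(\alpha,t)}(\sigma)=t$ (if some $E_n$ is empty, skip it). Set
$$
C:=\{m_{\max}\}\cup\bigcup_{n\ge1}s_n(E_n),
$$
and define $G(s_n(\alpha,t)):=\alpha$ and $G(m_{\max}):=\alpha_\ast$. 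This is well defined because the sets $\mathcal M_\sigma(\Lambda_n)$ are pairwise disjoint. Moreover $m_{\max}$ lies in none of them, since by the construction of Proposition~\ref{mp:prop:separated-layers} we have $\mathcal M_\sigma(\Lambda_n)\subset V_n$ and $m_{\max}\notin\overline V_n$. The calibration inequality is immediate: $h_{s_n(\alpha,t)}(\sigma)=t\le h(\alpha)=h(G(\cdot))$, and $h_{m_{\max}}=H=h(\alpha_\ast)$.

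Exact realization works as follows. Given $\alpha\ne\alpha_\ast$, we have $h(\alpha)<H$, so $h(\alpha)\in[H-\delta_{n-1},H-\delta_n]$ for some $n$ (or $h(\alpha)\in[0,H-\delta_1]$). Then $(\alpha,h(\alpha))\in E_n$, and $\mu_\alpha:=s_n(\alpha,h(\alpha))$ works. For $\alpha=\alpha_\ast$ take $\mu_{\alpha_\ast}=m_{\max}$.

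Compactness of $C$ follows from item (iii) of Proposition~\ref{mp:prop:separated-layers}. Each $s_n(E_n)$ is compact, and any sequence drawn from infinitely many distinct layers converges to $m_{\max}\in C$. The same fact shows that each $s_n(E_n)$ is relatively open in $C$. Indeed, $\operatorname{dist}(\mathcal M_\sigma(\Lambda_n),m_{\max})>0$, only finitely many layers come within that distance of $\mathcal M_\sigma(\Lambda_n)$, and those layers are disjoint compact sets. Hence $G$ is continuous on $s_n(E_n)$, because there it equals the projection $(\alpha,t)\mapsto\alpha$ composed with $s_n^{-1}$, which is continuous since $s_n$ is a homeomorphism onto its image.

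The main point is continuity of $G$ at $m_{\max}$. Let $\mu_j=s_{n_j}(\alpha_j,t_j)\to m_{\max}$ with $\mu_j\ne m_{\max}$. Because each layer is at positive distance from $m_{\max}$, necessarily $n_j\to\infty$, so $h(\alpha_j)\ge t_j\ge H-\delta_{n_j-1}\to H$. Now let $\beta$ be any accumulation point of $(\alpha_j)$ in the compact set $K_h$. Upper semicontinuity gives $h(\beta)\ge\limsup h(\alpha_j)=H$, and uniqueness of the maximizer forces $\beta=\alpha_\ast$. Hence $G(\mu_j)=\alpha_j\to\alpha_\ast$. This is exactly where the unique-maximizer hypothesis in $C^{m,H}_{ms}$ is used, and it is also why the layers are required to accumulate only at $m_{\max}$.
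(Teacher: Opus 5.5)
Your proposal is correct and follows essentially the same route as the paper: the same hypograph shells embedded into the separated layers $\Lambda_n$ via Theorem~\ref{mp:thm:subcritical-entropy-sections}, the same set $C=\{m_{\max}\}\cup\bigcup_n s_n(\mathcal E_n)$ and map $G$, the same compactness argument from item (iii) of Proposition~\ref{mp:prop:separated-layers}, and the same upper-semicontinuity plus unique-maximizer argument for continuity at $m_{\max}$. The differences are cosmetic. You prove that each layer is relatively open in $C$; there ``within that distance'' should read ``within half that distance''. You also exclude $m_{\max}$ from the layers using $m_{\max}\notin\overline V_n$, whereas the paper uses the entropy bound $r\le H-\delta_n<H$.
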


\begin{proof}
Put $K=K_h$, and, as in Proposition~\ref{mp:prop:separated-layers}, set
$
\delta_n=2^{-n}\htop(\sigma,X)
$
for every $n\ge0$.  Denote the unique maximizer of $h$ by $\alpha_\ast$. Define the hypograph
$$
\mathcal E
=
\{(\alpha,r)\in K\times[0,\htop(\sigma,X)]:0\le r\le h(\alpha)\}.
$$
To see that $\mathcal E$ is closed, suppose that
$(\alpha_j,r_j)\in\mathcal E$ and
$(\alpha_j,r_j)\to(\alpha,r)$.  Since $0\le r_j\le h(\alpha_j)$, upper
semicontinuity gives
$
0\le r
\le\limsup_{j\to\infty}h(\alpha_j)
\le h(\alpha).
$
Thus $(\alpha,r)\in\mathcal E$.  Hence $\mathcal E$ is a closed subset of
the compact set $K\times[0,\htop(\sigma,X)]$, and is therefore compact.
Concavity of $h$ also makes
$\mathcal E$ convex.  For $n\ge1$, define the compact shell
$$
\mathcal E_n
=
\{(\alpha,r)\in\mathcal E:
\delta_n\le \htop(\sigma,X)-r\le\delta_{n-1}\}.
$$
The point $(\alpha_\ast,\htop(\sigma,X))$ belongs to no shell.  Every other
point of $\mathcal E$ belongs to at least one shell.  In particular, if
$\alpha\ne\alpha_\ast$, then $(\alpha,h(\alpha))\in\mathcal E_n$ for some $n$.

\noindent
{\it Claim.} If $n_j\to\infty$ and
$
(\alpha_j,r_j)\in\mathcal E_{n_j},
$
then
$
r_j\to\htop(\sigma,X)
$
and
$
\alpha_j\to\alpha_\ast.
$

\begin{proof}[Proof of the claim]
The shell inequalities give
$$
0\le \htop(\sigma,X)-r_j\le\delta_{n_j-1}\longrightarrow0,
$$
so $r_j\to\htop(\sigma,X)$.  Since
$r_j\le h(\alpha_j)\le\htop(\sigma,X)$, it follows that
$h(\alpha_j)\to\htop(\sigma,X)$.  Let $\beta$ be a cluster point of
$(\alpha_j)$, which exists by compactness of $K$.  Upper semicontinuity gives
$$
\htop(\sigma,X)
=
\limsup_j h(\alpha_j)
\le
h(\beta)
\le
\htop(\sigma,X).
$$
Thus $h(\beta)=\htop(\sigma,X)$, and uniqueness of the maximizer yields
$\beta=\alpha_\ast$.  Every cluster point is therefore $\alpha_\ast$, so the
whole sequence converges to $\alpha_\ast$.
\end{proof}

Let $\Lambda_n$ be the layers from Proposition~\ref{mp:prop:separated-layers}.
If $(\alpha,r)\in\mathcal E_n$, then
$$
r\le \htop(\sigma,X)-\delta_n
<
\htop(\sigma,X)-\frac{\delta_n}{2}
<
\htop(\sigma,\Lambda_n).
$$
Let
$
\mathcal N=\{n\ge1:\mathcal E_n\ne\varnothing\}.
$
For every $n\in\mathcal N$, the function
$$
\rho_n:\mathcal E_n\to[0,\htop(\sigma,\Lambda_n)),
\qquad
\rho_n(\alpha,r)=r,
$$
is continuous.  Theorem~\ref{mp:thm:subcritical-entropy-sections}, applied to the transitive SFT $(\Lambda_n,\sigma|_{\Lambda_n})$, gives a continuous embedding
$
s_n:\mathcal E_n\longrightarrow \mathcal M_\sigma^e(\Lambda_n)
$
such that
$
h_{s_n(\alpha,r)}(\sigma)=r
$
for every 
$(\alpha,r)\in\mathcal E_n.
$

Define
$$
C
=
\{m_{\max}\}
\cup
\bigcup_{n\in\mathcal N}s_n(\mathcal E_n)
\subset \mathcal M_\sigma^e(X).
$$
Define $G:C\to K$ by
$$
G(m_{\max})=\alpha_\ast,
\qquad
G(s_n(\alpha,r))=\alpha.
$$
The map is well defined: each $s_n$ is injective, and the sets $\mathcal M_\sigma(\Lambda_n)$ are pairwise disjoint. We now prove that $C$ is compact and $G$ is continuous.

Let $(\mu_j)$ be a sequence in $C$.  If infinitely many terms lie in a fixed finite union
$
\{m_{\max}\}
\cup
\bigcup_{\substack{n\in\mathcal N\\ n\le N}}s_n(\mathcal E_n),
$
then compactness of that finite union gives a convergent subsequence.
It remains to consider a sequence of the form
$$
\mu_j=s_{n_j}(\alpha_j,r_j),
\qquad
n_j\to\infty,
\qquad
(\alpha_j,r_j)\in\mathcal E_{n_j}.
$$
Since $\mu_j\in \mathcal M_\sigma(\Lambda_{n_j})$ and
$
d_H(\mathcal M_\sigma(\Lambda_{n_j}),\{m_{\max}\})<2^{-n_j},
$
we have $\mu_j\to m_{\max}$.  Hence every sequence in $C$ has a convergent subsequence with limit in $C$, so $C$ is compact.

To prove continuity, let $\mu_j\to\mu$ in $C$.  Suppose first that $\mu\ne m_{\max}$.  Then $\mu$ belongs to a unique layer $s_n(\mathcal E_n)$, and, for all sufficiently large $j$, each $\mu_j$ also belongs to a layer.  These layer indices must be bounded; otherwise, along a subsequence they tend to infinity, and the Hausdorff estimate above would force that subsequence to converge to $m_{\max}$, contrary to $\mu_j\to\mu$.  Since finitely many pairwise disjoint compact layers are separated by a positive distance, $\mu_j$ eventually belongs to $s_n(\mathcal E_n)$.  The inverse of the embedding $s_n$ is continuous, and hence $G(\mu_j)\to G(\mu)$.

Now suppose that $\mu=m_{\max}$.  After omitting the indices $j$ for which
$\mu_j=m_{\max}$, write
$$
\mu_j=s_{n_j}(\alpha_j,r_j),
\qquad
(\alpha_j,r_j)\in\mathcal E_{n_j}.
$$
We must have $n_j\to\infty$.  Otherwise, after passing to a subsequence,
the indices $n_j$ would be constant.  The resulting sequence would lie in a
fixed compact layer and converge to $m_{\max}$, so that layer would contain
$m_{\max}$.  This is impossible, since every measure in the layer has entropy
strictly smaller than $\htop(\sigma,X)$, whereas
$h_{m_{\max}}(\sigma)=\htop(\sigma,X)$.  The preceding claim therefore gives
$$
G(\mu_j)=\alpha_j\longrightarrow\alpha_\ast=G(m_{\max}).
$$
Thus $G$ is continuous at $m_{\max}$ and hence on $C$.

For every $\mu\in C$, the pair $(C,G)$ satisfies the calibration inequality
$
h_\mu(\sigma)\le h(G(\mu)).
$
Indeed, equality holds at $m_{\max}$, since
$
h_{m_{\max}}(\sigma)=\htop(\sigma,X)=h(\alpha_\ast),
$
and if $\mu=s_n(\alpha,r)$, then
$
h_\mu(\sigma)=r\le h(\alpha)=h(G(\mu)).
$

It remains to verify that equality is attained for every parameter.  If
$\alpha=\alpha_\ast$, take $\mu_\alpha=m_{\max}$.  If
$\alpha\ne\alpha_\ast$, then $(\alpha,h(\alpha))\in\mathcal E_n$ for some
$n\in\mathcal N$; take
$
\mu_\alpha=s_n(\alpha,h(\alpha)).
$
In either case, $\mu_\alpha\in C$ is ergodic and satisfies
$
G(\mu_\alpha)=\alpha,
$
$
h_{\mu_\alpha}(\sigma)=h(\alpha).
$
\end{proof}

\subsection{Proof of Theorem~\ref{mp:thm:main-realization}}\label{mp:sec:calibrated-faces}

\begin{proof}[Proof of Theorem~\ref{mp:thm:main-realization}]
Fix $h\in C_{ms}^{m,H}$ and $N\in\mathbb N$. By
Proposition~\ref{mp:prop:compact-calibrated-pair}, there are a compact set
$C\subset\mathcal M_\sigma^e(X)$ and a continuous map $G:C\to K_h$ satisfying
the two calibration conditions in
Theorem~\ref{thm:calibrated-map-realization}.
By Theorem~\ref{thm:examples-H1-H5}, a transitive SFT satisfies
{\rm(H1)--(H5)}. Thus the second assertion of
Theorem~\ref{thm:calibrated-map-realization}, applied to the calibrated pair
above, gives
$
\Phi_1,\dots,\Phi_N\in C(X,\mathbb R^m)
$
with the required spectra, and these vector potentials are pairwise
non-cohomologous.
\end{proof}

\section{Whole-space pressure flexibility}\label{sec:app}

The entropy spectrum realization theorem has a direct convex dual
consequence.  This section first records the necessary duality and then proves
the multiparameter whole-space pressure realization theorem, which completely
resolves the problem of Kucherenko and Quas.

\subsection{Convex duality for entropy spectra}
Let $m\geq1$, and let
$F:\mathbb R^m\to\mathbb R$ be finite and convex.  Its
Legendre--Fenchel transform is
$$
F^*(\alpha)
=
\sup_{t\in\mathbb R^m}\{t\cdot\alpha-F(t)\}
\quad\text{for every }\alpha\in\mathbb R^m.
$$
We use the following standard form of the Fenchel--Moreau theorem.
\begin{lemma}\cite[Theorem 12.2]{Rockafellar1970}\label{lem:Fenchel-Moreau}
For every $t\in\mathbb R^m$, one has
$
F(t)
=
\sup_{\alpha\in\mathbb R^m}\{t\cdot\alpha-F^*(\alpha)\}.
$
Equivalently, $F=F^{**}$.
\end{lemma}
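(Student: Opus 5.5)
The statement is the Fenchel--Moreau identity $F=F^{**}$ for a finite convex function $F:\mathbb R^m\to\mathbb R$. The plan is to prove the two inequalities separately, with all the work in the reverse direction.

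\emph{The inequality $F^{**}\le F$.} This is immediate. By definition, $F^*(\alpha)\ge t\cdot\alpha-F(t)$ for every $t$ and $\alpha$. Hence $t\cdot\alpha-F^*(\alpha)\le F(t)$, and taking the supremum over $\alpha$ gives $F^{**}(t)\le F(t)$.

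\emph{The inequality $F^{**}\ge F$.} Fix $t_0\in\mathbb R^m$. The key input is that a finite convex function on all of $\mathbb R^m$ has a nonempty subdifferential at every point. Since $F$ is finite on $\mathbb R^m$, the point $t_0$ lies in the interior of the domain of $F$. Consequently the epigraph $\{(t,s):s\ge F(t)\}$ is a convex set with nonempty interior, and $(t_0,F(t_0))$ lies on its boundary. The supporting hyperplane theorem (Hahn--Banach separation in finite dimensions) then gives a nonzero vector $(\alpha_0,\lambda)$ such that
$$\alpha_0\cdot t-\lambda s\le \alpha_0\cdot t_0-\lambda F(t_0)\qquad\text{whenever }s\ge F(t).$$

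We then show the hyperplane is nonvertical, i.e.\ $\lambda>0$. Letting $s\to\infty$ forces $\lambda\ge0$. If $\lambda=0$, the inequality reads $\alpha_0\cdot(t-t_0)\le0$ for all $t\in\mathbb R^m$, which forces $\alpha_0=0$ and contradicts $(\alpha_0,\lambda)\ne0$. So we may normalize to $\lambda=1$, and then
$$F(t)\ge F(t_0)+\alpha_0\cdot(t-t_0)\qquad\text{for all }t.$$
In other words, $\alpha_0$ is a subgradient of $F$ at $t_0$.

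Rearranging, $t\cdot\alpha_0-F(t)\le t_0\cdot\alpha_0-F(t_0)$ for every $t$, so $F^*(\alpha_0)=t_0\cdot\alpha_0-F(t_0)$, which is finite. Therefore
$$F^{**}(t_0)\ge t_0\cdot\alpha_0-F^*(\alpha_0)=F(t_0).$$
Together with the first inequality, this proves $F=F^{**}$.

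The main obstacle is existence of the supporting hyperplane together with its nonverticality, and that is exactly where finiteness of $F$ on the whole space is used. The result is standard, and the paper cites Rockafellar for it.
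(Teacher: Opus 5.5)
Your argument is correct, but it is not the route behind the citation. The paper gives no proof and invokes Rockafellar's Theorem~12.2, which proves $f^{**}=\operatorname{cl}f$ for an arbitrary convex $f$. It first shows (Theorem~12.1) that a closed convex function is the pointwise supremum of its affine minorants, by strictly separating points lying \emph{outside} the closed epigraph and showing that non-vertical hyperplanes suffice. It then notes that $t\mapsto t\cdot\alpha-c$ is a minorant exactly when $c\ge f^*(\alpha)$. For a finite convex $F$ on $\mathbb R^m$ one then only needs continuity, so that $\operatorname{cl}F=F$.

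You instead support the epigraph at the boundary point $(t_0,F(t_0))$ itself, and use that the domain is all of $\mathbb R^m$ to exclude vertical supporting hyperplanes; that is, you prove $\partial F(t_0)\neq\varnothing$. In this setting your route is more elementary and yields a bit more: the supremum defining $F^{**}(t_0)$ is attained at every $\alpha_0\in\partial F(t_0)$. Nonemptiness of subdifferentials is also exactly what the paper later uses without comment in Proposition~\ref{prop:pressure-dual-reduction}, for instance when choosing $v_q\in\partial F(qw)$. Rockafellar's route, on the other hand, covers closed proper convex functions taking the value $+\infty$. There subdifferentials may be empty at relative boundary points of the domain, and your argument does not apply.

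A minor point: your ``consequently the epigraph has nonempty interior'' needs local boundedness from above, e.g.\ $F\le\max_i F(t_0\pm e_i)$ on the neighbourhood $\operatorname{conv}\{t_0\pm e_i\}$ of $t_0$. The claim is not needed at all, though, since in $\mathbb R^{m+1}$ every point of a convex set that is not interior to it admits a nonzero supporting functional.
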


Let $F:\bbR^m\to\bbR$ be finite and convex.  A vector $v\in\bbR^m$ is a \emph{subgradient} of $F$ at $s\in\bbR^m$ if
$$
F(t)\ge F(s)+v\cdot(t-s)
\qquad\text{for every }t\in\bbR^m.
$$
The set of all such vectors is the \emph{subdifferential} $\partial F(s)$.  For $v\in\partial F(s)$, the affine function
$$
t\longmapsto F(s)+v\cdot(t-s)
=
\bigl(F(s)-v\cdot s\bigr)+v\cdot t
$$
is a supporting affine function of $F$, its graph is a supporting hyperplane to the graph of $F$, and its vertical-axis intercept is $F(s)-v\cdot s$.

\subsection{Differentiability at the origin}\label{sec:differentiability-at-origin}
Let $(X,f)$ be a dynamical system.
Suppose that the entropy map is upper semicontinuous and that the measure of
maximal entropy $m_{\max}$ is unique. Set $F(t):=P_f(t\cdot\Phi)$. For each
$t\in\mathbb R^m$, choose an arbitrary equilibrium state $\mu_t$ of
$t\cdot\Phi$. Such a state exists by upper semicontinuity of the entropy map.
No uniqueness of equilibrium state is assumed.  Put
$$
a=\int\Phi\,dm_{\max}
\quad\text{and}\quad
M=\max_{x\in X}\|\Phi(x)\|.
$$
The pressure variational principle gives
$$
\htop(f)+t\cdot a
\le F(t)
=h_{\mu_t}(f)+t\cdot\int\Phi\,d\mu_t
\le \htop(f)+M\|t\|.
$$
Consequently, $F(t)\to\htop(f)$ and
$h_{\mu_t}(f)\to\htop(f)$ as $t\to0$.  If $t_n\to0$, then every convergent
subsequence of $(\mu_{t_n})$ has a limit $\mu$ satisfying
$h_\mu(f)=\htop(f)$, by upper semicontinuity of the entropy map.  The
uniqueness of the measure of maximal entropy forces $\mu=m_{\max}$.
Therefore every such choice satisfies $\mu_t\to m_{\max}$ as $t\to0$.
Evaluating the pressure variational principle at $m_{\max}$ gives the first
inequality below, whereas the second follows from
$h_{\mu_t}(f)\le\htop(f)=F(0)$:
$$
0
\le F(t)-F(0)-t\cdot a
 =h_{\mu_t}(f)-\htop(f)
 +t\cdot\left(\int\Phi\,d\mu_t-a\right)
\le t\cdot\left(\int\Phi\,d\mu_t-a\right)
\le \|t\|\left\|\int\Phi\,d\mu_t-a\right\|.
$$
It follows directly that
$$
\lim_{t\to0}
\frac{\left|F(t)-F(0)-t\cdot a\right|}{\|t\|}=0.
$$
Thus $F$ is differentiable at the origin and
$\nabla F(0)=\int\Phi\,dm_{\max}$.  Hence differentiability at the origin is
necessary when the pressure is prescribed on all of $\mathbb R^m$.

\subsection{A dual reduction to entropy spectrum realization}

The following proposition isolates the convex dual argument common to the
multiparameter and scalar pressure realization results.

\begin{proposition}\label{prop:pressure-dual-reduction}
Let $m\ge1$ and $H>0$, and let $F:\mathbb R^m\to\mathbb R$ be finite and
convex.  Suppose that $F(0)=H$, that $F$ is differentiable at $0$, and that
the vertical intercept of every supporting hyperplane of $F$ is
nonnegative.  Define
$$
h(\alpha)
:=
\inf_{t\in\mathbb R^m}\bigl(F(t)-t\cdot\alpha\bigr)
=-F^*(\alpha),
\qquad
K:=\{\alpha\in\mathbb R^m:h(\alpha)>-\infty\}.
$$
Then $K$ is a nonempty compact convex set and
$
h|_K\in C_{ms}^{m,H},
$
with unique maximizer $\nabla F(0)$.  Moreover,
\begin{equation}\label{eq:dual-pressure-recovery}
F(t)
=
\sup_{\alpha\in K}\bigl(h(\alpha)+t\cdot\alpha\bigr)
\qquad\text{for every }t\in\mathbb R^m.
\end{equation}

If, in addition, $(X,f)$ has the saturated property for every invariant
measure, $\htop(f)=H$, and $\Phi\in C(X,\mathbb R^m)$ satisfies
$$
R(\Phi;X,f)=K
$$
and
$$
\htop(f,L(\Phi,\alpha;X,f))=h(\alpha)
\qquad\text{for every }\alpha\in K,
$$
then
$
P_f(t\cdot\Phi)=F(t)
$
for every 
$
t\in\mathbb R^m.
$
\end{proposition}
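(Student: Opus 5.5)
The argument splits into a convex-analysis part, which shows $h|_K\in C_{ms}^{m,H}$ and establishes \eqref{eq:dual-pressure-recovery}, and a dynamical part, which uses the conditional variational principle.

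\emph{Convex part.} Since $h=-F^*$ is an infimum of affine functions of $\alpha$, it is concave and upper semicontinuous, and $K=\operatorname{dom}F^*$ is convex.

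\emph{Nonnegativity on $K$.} Fix $\alpha\in K$ and suppose first that $\alpha$ is a subgradient of $F$ at some $s$. Then
$$h(\alpha)=F(s)-s\cdot\alpha,$$
which is exactly the vertical intercept of that supporting hyperplane, hence nonnegative. For general $\alpha\in K$, I would use that subgradient images $\bigcup_s\partial F(s)$ contain the relative interior of $\operatorname{dom}F^*$ (Rockafellar, Thm.~23.4). This gives $h\ge0$ on $\operatorname{ri}K$. Upper semicontinuity alone only gives $h(\alpha)\ge\limsup$ along segments, which points the wrong way. The right tool is closedness of the proper concave function $h$: it is continuous along segments from a relative interior point (Rockafellar, Cor.~7.5.1), so $h(\alpha)=\lim_{\lambda\to1}h((1-\lambda)\alpha_0+\lambda\alpha)\ge0$.

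\emph{Compactness of $K$.} Since $h\ge0$ on $K$, every $\alpha\in K$ satisfies $t\cdot\alpha\le F(t)$ for all $t$. Taking $t=\pm R e_i$ bounds each coordinate of $\alpha$ by $\max(F(Re_i),F(-Re_i))/R$, so $K$ is bounded. Closedness of $K$ follows by the same inequality: if $\alpha_j\to\alpha$ with $\alpha_j\in K$, then $t\cdot\alpha-F(t)\le0$ for all $t$, so $F^*(\alpha)\le0<\infty$. So $K$ is closed, hence compact. Nonemptiness comes from $\partial F(0)\ne\varnothing$, so $\nabla F(0)\in K$.

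\emph{Maximum.} Taking $t=0$ in the infimum gives $h\le F(0)=H$. At $\alpha_0=\nabla F(0)$, the subgradient inequality gives $h(\alpha_0)=F(0)=H$. For uniqueness, if $h(\alpha)=H$ then $F(t)\ge H+t\cdot\alpha$ for all $t$, so $\alpha\in\partial F(0)=\{\nabla F(0)\}$ by differentiability. This shows $h|_K\in C_{ms}^{m,H}$.

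\emph{Recovery of $F$.} Formula \eqref{eq:dual-pressure-recovery} is Lemma~\ref{lem:Fenchel-Moreau} rewritten: since $-F^*=-\infty$ off $K$,
$$F(t)=\sup_{\alpha}\bigl(t\cdot\alpha+h(\alpha)\bigr)=\sup_{\alpha\in K}\bigl(t\cdot\alpha+h(\alpha)\bigr).$$

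\emph{Dynamical part.} By Proposition~\ref{prop:pressure}(1) and Proposition~\ref{prop:shifts-facts}, group invariant measures by their rotation vector $\alpha=\int\Phi\,d\mu$, which ranges over $R(\Phi;X,f)=K$. This gives
$$P_f(t\cdot\Phi)=\sup_{\alpha\in K}\Bigl(t\cdot\alpha+\sup\{h_\mu(f):\textstyle\int\Phi\,d\mu=\alpha\}\Bigr)=\sup_{\alpha\in K}\bigl(t\cdot\alpha+\htop(f,L(\Phi,\alpha;X,f))\bigr)=\sup_{\alpha\in K}\bigl(t\cdot\alpha+h(\alpha)\bigr)=F(t).$$
Note that only the saturated property is needed here: the conditional variational principle is used with a supremum, not a maximum.

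The main obstacle is nonnegativity of $h$ at boundary points of $K$ that are not subgradients. The plan handles this with the relative-interior and segment-continuity argument.
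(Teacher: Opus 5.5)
Your argument is correct, but you obtain the two key facts, $h\ge0$ on $K$ and compactness of $K$, by a different route from the paper.

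\emph{The paper's route.} It sets $C=\overline{\operatorname{conv}}\bigl(\bigcup_s\partial F(s)\bigr)$ and notes that all subgradients are uniformly bounded: the intercepts lie in $[0,H]$, and the supporting inequalities at $\pm e_i$ bound each coordinate. It then proves $K=C$ by hand. For $C\subset K$, with $h\ge0$ on $C$, it uses that convex combinations of subgradients satisfy $F(t)\ge\alpha\cdot t$. For $K\subset C$, it strictly separates a point $\alpha\notin C$ with a direction $w$ and shows $F(qw)-q\,\alpha\cdot w\le H-q\delta\to-\infty$. This is where the upper bound $F(s)-v\cdot s\le F(0)=H$ on intercepts enters.

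\emph{Your route.} You import conjugate duality: $\operatorname{ri}(\operatorname{dom}F^*)\subset\operatorname{dom}\partial F^*$ (Theorem~23.4 applied to $F^*$), combined with $\partial F^*=(\partial F)^{-1}$ (Theorem~23.5). This gives $h\ge0$ on $\operatorname{ri}K$. You then extend to all of $K$ and read boundedness and closedness off the inequality $F(t)\ge t\cdot\alpha$.

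\emph{Comparison.} The paper's argument is self-contained, needing only strong separation. It also identifies the rotation set explicitly as the closed convex hull of all subgradients of $F$. Yours is shorter and never uses the upper bound on intercepts, at the price of heavier cited theorems.

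One correction to the step you flagged as the main obstacle: upper semicontinuity does not point the wrong way. It gives directly
$h(\alpha)\ge\limsup_{\lambda\uparrow1}h\bigl((1-\lambda)\alpha_0+\lambda\alpha\bigr)\ge0$.
Equivalently, $\{h\ge0\}$ is closed and contains $\operatorname{ri}K$, hence contains $\overline K$. Since $\{h\ge0\}\subset K$, this yields both $h\ge0$ on $K$ and closedness of $K$ at once, so Corollary~7.5.1 is not needed.

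A minor ordering point: that step uses $\operatorname{ri}K\ne\varnothing$. So you should record $\nabla F(0)\in K$, and hence $K\neq\varnothing$, before the nonnegativity argument rather than after it.
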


\begin{proof}
For each $t\in\mathbb R^m$, the function
$\alpha\mapsto F(t)-t\cdot\alpha$ is affine and continuous.  Hence $h$ is
concave and upper semicontinuous on $\mathbb R^m$, and $K$ is convex.

Set
$
C
=
\overline{\operatorname{conv}}
\left(\bigcup_{s\in\mathbb R^m}\partial F(s)\right).
$
If $v\in\partial F(s)$, write
$$
a(s,v):=F(s)-v\cdot s
$$
for the vertical intercept of the corresponding supporting hyperplane.
By hypothesis $a(s,v)\ge0$, while the supporting inequality at the origin
gives $a(s,v)\le F(0)=H$.  Let $e_1,\ldots,e_m$ be the standard basis
vectors.  The supporting inequalities at $e_i$ and $-e_i$ give
$$
-F(-e_i)\le v\cdot e_i\le F(e_i)
\qquad\text{for every }i=1,\ldots,m.
$$
Thus all subgradients of $F$ are uniformly bounded, and $C$ is a nonempty
compact convex set.  In particular, $F$ is Lipschitz.

\medskip
\noindent
{\it Claim.} $K=C$.

\begin{proof}[Proof of the claim]
Let $\alpha\in C$.  By the definition of $C$, there are convex
combinations
$$
\alpha_n=\sum_i\lambda_{n,i}v_{n,i},
\qquad
v_{n,i}\in\partial F(s_{n,i}),
$$
such that $\alpha_n\to\alpha$.  For every $t\in\mathbb R^m$, the supporting
inequalities and the nonnegativity of the vertical intercepts give
$$
F(t)
\ge
\sum_i\lambda_{n,i}
\bigl(a(s_{n,i},v_{n,i})+v_{n,i}\cdot t\bigr)
\ge
\alpha_n\cdot t.
$$
Letting $n\to\infty$ gives $F(t)\ge\alpha\cdot t$ for every $t$, and hence
$h(\alpha)\ge0$.  Thus $\alpha\in K$, proving $C\subset K$.

Conversely, suppose that $\alpha\notin C$.  Since $C$ is a nonempty compact
convex subset of $\mathbb R^m$, the strong separation theorem applied to
$C$ and $\{\alpha\}$ gives a nonzero vector $w\in\mathbb R^m$ such that
$$
\delta
:=
\alpha\cdot w-\sup_{v\in C}v\cdot w
>0.
$$
For $q>0$, choose $v_q\in\partial F(qw)$.  Since $v_q\in C$ and
$a(qw,v_q)\le H$, we have
$$
F(qw)-q\alpha\cdot w
=
a(qw,v_q)+q(v_q-\alpha)\cdot w
\le
H-q\delta.
$$
It follows that
$
h(\alpha)
\le
F(qw)-q\alpha\cdot w
\le
H-q\delta
$
for every $q>0$.  Letting $q\to\infty$ gives $h(\alpha)=-\infty$, so
$\alpha\notin K$.  This proves $K\subset C$ and hence $K=C$.
\end{proof}

Consequently, $K$ is nonempty, compact, and convex.

We next determine the range and the maximizers of $h$.  For every
$\alpha\in K=C$, the proof of the claim gives $h(\alpha)\ge0$, while
choosing $t=0$ in the definition of $h$ gives
$
h(\alpha)\le F(0)=H.
$
Moreover, because the expression defining $h(\alpha)$ equals $H$ at
$t=0$, we have
$$
\begin{aligned}
h(\alpha)=H
\quad\Longleftrightarrow\quad
F(t)-t\cdot\alpha\ge F(0)
\quad\text{for every }t\in\mathbb R^m
\quad\Longleftrightarrow\quad
\alpha\in\partial F(0).
\end{aligned}
$$
It remains to identify $\partial F(0)$.  For $0<s<1$, convexity gives
$
\frac{F(st)-F(0)}{s}\le F(t)-F(0).
$
Letting $s\downarrow0$ and using differentiability at the origin yields
the supporting inequality
$$
F(t)\ge F(0)+\nabla F(0)\cdot t
\qquad\text{for every }t\in\mathbb R^m,
$$
so $\nabla F(0)\in\partial F(0)$.  Conversely, if
$\alpha\in\partial F(0)$, then for every $v\in\mathbb R^m$ and $r>0$,
$
\frac{F(rv)-F(0)}{r}\ge\alpha\cdot v.
$
Letting $r\downarrow0$ yields
$\nabla F(0)\cdot v\ge\alpha\cdot v$.  Replacing $v$ by $-v$ gives the
reverse inequality, and hence $\alpha=\nabla F(0)$.  Therefore
$
\partial F(0)=\{\nabla F(0)\}
$
and
$$
h(\alpha)=H
\quad\Longleftrightarrow\quad
\alpha=\nabla F(0).
$$
In particular, $h(\nabla F(0))=H$, and this maximizer is unique.  Together
with the concavity and upper semicontinuity established above, this proves
that $h|_K\in C_{ms}^{m,H}$.

Since $h=-F^*$ on $K$ and $h=-\infty$ off $K$, the Fenchel--Moreau theorem
gives \eqref{eq:dual-pressure-recovery}.  Finally, suppose that $\Phi$ has
the stated dynamical properties.  Proposition~\ref{prop:shifts-facts} and
the pressure variational principle yield
$$
\begin{aligned}
P_f(t\cdot\Phi)
=
\sup_{\mu\in\mathcal M_f(X)}
\left(h_\mu(f)+t\cdot\int\Phi\,d\mu\right)
=
\sup_{\alpha\in K}\bigl(h(\alpha)+t\cdot\alpha\bigr)
=F(t),
\end{aligned}
$$
as required.
\end{proof}

\subsection{Proof of Theorem~\ref{mp:thm:whole-space-pressure}}
\begin{proof}[Proof of Theorem~\ref{mp:thm:whole-space-pressure}]
Fix $N\in\mathbb N$.  Proposition~\ref{prop:pressure-dual-reduction},
applied to $F$, gives a nonempty compact convex set
$K\subset\mathbb R^m$ and a function $h\in C_{ms}^{m,H}$.
By Theorem~\ref{mp:thm:main-realization}, there are pairwise
non-cohomologous vector potentials
$
\Phi_1,\ldots,\Phi_N\in C(X,\mathbb R^m)
$
such that, for every $j=1,\ldots,N$,
$$
R(\Phi_j;X,\sigma)=K
$$
and
$$
\htop(\sigma,L(\Phi_j,\alpha;X,\sigma))=h(\alpha)
\qquad\text{for every }\alpha\in K.
$$
By Theorem~\ref{thm:examples-H1-H5}, transitive SFTs have the saturated property for every
invariant measure.  Hence the final assertion of
Proposition~\ref{prop:pressure-dual-reduction}, applied to each $\Phi_j$,
gives
$$
P_\sigma(t\cdot\Phi_j)=F(t)
\qquad\text{for every }t\in\mathbb R^m
\text{ and }j=1,\ldots,N.
$$
Thus the $\Phi_j$ have all the required properties.
\end{proof}

\section{Scalar realization beyond SFTs}\label{sec:scalar-realization}
\subsection{Abstract scalar hypotheses}

The realization results in this section are established under structural
hypotheses shared by a broad class of standard symbolic and uniformly
hyperbolic systems; see Theorem~\ref{thm:examples-H1-H5} for concrete
examples.  This abstract framework separates the realization mechanism from
any particular model and highlights its three main ingredients:
thermodynamic formalism, saturated sets, and approximation by zero entropy
ergodic measures.  More precisely, we assume:
\begin{itemize}
    \item[(H1)] the entropy map $\mu\mapsto h_\mu(f)$ is upper
    semicontinuous on $\mathcal M_f(X)$;
    \item[(H2)] every Lipschitz potential on $X$ has a unique equilibrium
    state;
    \item[(H3)] $(X,f)$ has the saturated property for every invariant
    measure;
    \item[(H4)] there exist two distinct uniquely ergodic measures with zero
    entropy;
    \item[(H5)] ergodic measures with zero entropy are dense in
    $\mathcal M_f(X)$.
\end{itemize}
The abstract realization principle relies on hypotheses ${\rm(H1)}$ and
${\rm(H3)}$.  Hypotheses ${\rm(H2)}$ and ${\rm(H4)}$ are used to construct
the two-branch entropy path, while ${\rm(H5)}$ is needed only for the
multiplicity of
non-cohomologous realizations.

\subsection{Entropy monotone paths}

The next result supplies the thermodynamic ingredient for the realization
theorem. It constructs an entropy monotone path of ergodic measures joining
two uniquely ergodic zero entropy measures through the unique measure of
maximal entropy. This path will later be used as an entropy scale on which
the target spectrum is encoded.

\begin{theorem}\label{thm:periodic-measure-monotone-path}
	Let $(X,f)$ be a dynamical system
	satisfying {\rm(H1)--(H4)}.
	Denote by $m_{\max}$ the unique measure of maximal entropy of $(X,f)$.
	Then, for any two distinct uniquely ergodic zero entropy measures
	$\mu_L,\mu_R\in\mathcal M_f(X)$, there exists a continuous path
	$
	\{\nu_t\}_{t\in[-1,1]}\subset \mathcal M_f^e(X)
	$
	such that
	$$
	\nu_{-1}=\mu_L,\qquad \nu_0=m_{\max},\qquad \nu_1=\mu_R,
	$$
	the map
	$
	t\mapsto h_{\nu_t}(f)
	$
	is continuous on $[-1,1]$, nondecreasing on $[-1,0]$, and nonincreasing on $[0,1]$, and
	$
	\{\nu_t:t\in[-1,0]\}\cap \{\nu_t:t\in[0,1]\}=\{m_{\max}\}.
	$
\end{theorem}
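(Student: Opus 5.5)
The plan mirrors the mixing-SFT argument of Lemma~\ref{lem:periodic-measure-monotone-path-transitive-sft}, using only (H1)--(H4). Each branch comes from the equilibrium states of one Lipschitz potential.

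\emph{Choice of potentials.} Fix a compatible metric $d_X$. Put
$$
\varphi_L(y):=-d_X(y,\operatorname{Supp}\mu_L),\qquad
\varphi_R(y):=-d_X(y,\operatorname{Supp}\mu_R).
$$
Both are Lipschitz and satisfy $\varphi_\bullet\le0$, with equality exactly on $\operatorname{Supp}\mu_\bullet$. Since $\mu_L$ is uniquely ergodic on its support, any invariant $\nu$ with $\int\varphi_L\,d\nu=0$ is carried by $\operatorname{Supp}\mu_L$ and hence equals $\mu_L$. Thus $\mathcal M_{\max}(\varphi_L)=\{\mu_L\}$, and likewise $\mathcal M_{\max}(\varphi_R)=\{\mu_R\}$.

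\emph{Building one branch.} By (H2), $s\varphi_L$ has a unique equilibrium state $\nu^L_s$ for each $s\ge0$, and $\nu^L_0=m_{\max}$. By (H1) and Proposition~\ref{prop:pressure}(4), $s\mapsto\nu^L_s$ is weak$^*$ continuous. The argument of Lemma~\ref{lem:periodic-measure-monotone-path-transitive-sft} gives $\int\varphi_L\,d\nu^L_s\ge -\htop(f)/s$, so $\nu^L_s\to\mu_L$ as $s\to\infty$. Each $\nu^L_s$ is ergodic, being the unique equilibrium state and hence an extreme point of the face of equilibrium states.

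For monotonicity I will not use analyticity. Put $P(s)=P_f(s\varphi_L)$. By Proposition~\ref{prop:pressure}(3), $P$ is $C^1$ with $P'(s)=\int\varphi_L\,d\nu^L_s$. Convexity of $P$ makes $P'$ nondecreasing. For $s<s'$, the equilibrium property applied to both states gives
$$
h_{\nu_s}+s\,a_s\ \ge\ h_{\nu_{s'}}+s\,a_{s'},\qquad a_s:=P'(s).
$$
Since $a_{s'}\ge a_s$, this yields $h_{\nu_s}\ge h_{\nu_{s'}}+s(a_{s'}-a_s)\ge h_{\nu_{s'}}$, so $s\mapsto h_{\nu^L_s}$ is nonincreasing. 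Continuity of entropy along the path follows from $h_{\nu_s}=P(s)-sP'(s)$ and $P\in C^1$. At $s=\infty$ I use (H1): $\limsup h_{\nu_s}\le h_{\mu_L}=0$.

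Now reparametrize, setting $\nu_t:=\nu^L_{(1+t)/(-t)}$ for $t\in(-1,0]$ and $\nu_{-1}:=\mu_L$. The right branch is built the same way from $\varphi_R$, with $\nu_t:=\nu^R_{(1-t)/t}$ for $t\in[0,1)$ and $\nu_1:=\mu_R$.

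\emph{Main obstacle: the two branches must meet only at $m_{\max}$.} Suppose $\nu^L_s=\nu^R_{s'}=\nu$. If one of $s,s'$ is infinite, the common measure has zero entropy, while the other branch at a finite parameter has entropy $\ge -s\int\varphi\,d\nu_s$. I would show this is positive. It requires $\int\varphi_L\,d\nu^L_s<0$, i.e.\ $\nu^L_s\ne\mu_L$ for finite $s>0$. If $\nu^L_s=\mu_L$, then $\mu_L$ would be an equilibrium state, so $P(s)=0$. Since $P$ is convex, nonincreasing, and has $P\ge0$, we would get $P\equiv0$ on $[s,\infty)$ and hence $P'(s)=0$. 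A pointwise step like this may need care; alternatively, use directly that $\mu_L$ cannot equal $\mu_R$.

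For finite $s,s'>0$, the measure $\nu$ is then the unique equilibrium state of both $s\varphi_L$ and $s'\varphi_R$. The plan is to test the variational principle against $\mu_L$ and $\mu_R$:
$$
h_\nu+s\!\int\!\varphi_L\,d\nu\ \ge\ 0,\qquad
h_\nu+s'\!\int\!\varphi_R\,d\nu\ \ge\ 0,
$$
with strictness coming from uniqueness, since $\mu_L\neq\nu$. I expect to combine these with the equilibrium condition for the convex combination $\tfrac12(s\varphi_L+s'\varphi_R)$, or to perturb the potentials, to reach a contradiction. This is where I anticipate real difficulty. If no contradiction appears, the fallback is to modify the right-branch potential to $\varphi_R+c\,\varphi_L$ so that separation is forced by the integrals of $\varphi_L$: on the left branch $\int\varphi_L$ decreases to $0$ monotonically, while on the right branch it stays bounded away from $0$.
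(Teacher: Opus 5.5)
\textbf{There is a genuine gap, exactly where you expected one.} Building each branch separately works and is essentially Claims 1--5 of the paper. What you never prove is that the two branches meet only at $m_{\max}$ when both parameters $s,s'>0$ are finite. (The case of an infinite parameter is easy and needs no discussion of $\nu^L_s\neq\mu_L$. If, say, $s'=\infty$, the common measure is $\mu_R$, and $\int\varphi_L\,d\mu_R<0$ because the supports are disjoint: a point of $\operatorname{Supp}\mu_L\cap\operatorname{Supp}\mu_R$ would give an invariant measure lying in both $\mathcal M_f(\operatorname{Supp}\mu_L)=\{\mu_L\}$ and $\mathcal M_f(\operatorname{Supp}\mu_R)=\{\mu_R\}$.)

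With two unrelated potentials, nothing in (H1)--(H4) prevents the unique equilibrium state of $s\varphi_L$ from coinciding with that of $s'\varphi_R$. In concrete hyperbolic examples you could use the fact that equal equilibrium states of H\"older potentials force cohomology up to a constant, but no such rigidity is available abstractly. The inequalities you get by testing against $\mu_L$ and $\mu_R$ are consistent with coincidence. The convex-combination idea only shows that a common $\nu$ would be an equilibrium state of every $\lambda s\varphi_L+(1-\lambda)s'\varphi_R$, with $P_f$ affine on that segment; that is not a contradiction.

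Your fallback fails too. Both branches start at $m_{\max}$, so near the origin the values of $\int\varphi_L$ on the right branch overlap those on the left branch. The left branch's values sweep out all of $[\int\varphi_L\,dm_{\max},0)$ by continuity, so staying away from $0$ only separates the right branch from the tail of the left one. Moreover, $\mu_R$ need no longer be the unique maximizing measure of $\varphi_R+c\varphi_L$, so you also lose control of the right endpoint.

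\textbf{The missing idea is to generate both branches from a single potential}, so that one linear functional separates them. The paper uses
$$
g(z)=\frac{d(z,B)-d(z,A)}{d(z,A)+d(z,B)},\qquad A=\operatorname{Supp}\mu_L,\quad B=\operatorname{Supp}\mu_R .
$$
This equals $1$ exactly on $A$ and $-1$ exactly on $B$, so $\mu_L$ is the unique maximizing measure and $\mu_R$ the unique minimizing measure. It then sets $\varphi=g-\int g\,dm_{\max}$ and takes the equilibrium states $\mu_s$ of $s\varphi$ for all $s\in\mathbb R$. The parameters $s\ge0$ give the left branch and $s\le0$ the right one. Since $P_f(s\varphi)\ge h_{m_{\max}}(f)=\htop(f)$,
$$
s\int\varphi\,d\mu_s=P_f(s\varphi)-h_{\mu_s}(f)\ge\htop(f)-h_{\mu_s}(f)\ge0,
$$
so $\int\varphi\,d\mu_s$ has the weak sign of $s$. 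A measure $\nu$ on both branches at finite parameters $s_-<0<s_+$ therefore has $\int\varphi\,d\nu=0$. Hence $h_\nu(f)=P_f(s_+\varphi)\ge\htop(f)$, and $\nu=m_{\max}$. The endpoints are excluded by $\int\varphi\,d\mu_L>0>\int\varphi\,d\mu_R$.

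\textbf{A minor slip:} your reparametrizations are reversed. $(1+t)/(-t)\to0$ as $t\to-1^+$ and it blows up at $t=0$. You want $s=-t/(1+t)$ on $[-1,0]$ and $s=t/(1-t)$ on $[0,1]$.
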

\begin{proof}
	Write
		$
		A:=\operatorname{Supp}(\mu_L),
		~
		B:=\operatorname{Supp}(\mu_R).
		$
	Since $\mu_L$ and $\mu_R$ are distinct uniquely ergodic measures, $A$ and $B$ are disjoint closed sets. Define
	$$
	g(z):=\frac{d(z,B)-d(z,A)}{d(z,A)+d(z,B)}
	\qquad\text{for every }z\in X.
	$$
	Then $g$ is well defined and Lipschitz, with
	$$
	g|_A\equiv 1,\qquad g|_B\equiv -1,
	$$
	while $g(z)<1$ for $z\notin A$ and $g(z)>-1$ for $z\notin B$. It follows that
	$\mu_L$ is the unique maximizing measure of $g$, and $\mu_R$ is the unique minimizing measure of $g$.
	
	Now define
	$
	\varphi:=g-\int g\,dm_{\max}.
	$
	Then $\varphi$ is Lipschitz and
	$
	\int \varphi\,dm_{\max}=0.
	$
	Since subtracting a constant does not change maximizing or minimizing measures,
	$\mu_L$ is the unique maximizing measure of $\varphi$, and $\mu_R$ is the unique minimizing
	measure of $\varphi$.
	
	For each $s\in\mathbb R$, let $\mu_s$ denote the unique equilibrium state for $s\varphi$.
	By assumption, $\mu_s$ exists and is unique. In particular,
	$
	\mu_0=m_{\max},
	$
	and each $\mu_s$ is ergodic.
	
	We prove several properties of the family $\{\mu_s\}_{s\in\mathbb R}$.
	
	\medskip
	\noindent
	\medskip
	\noindent
	\medskip
	\noindent
	{\it Claim 1: the map $s\mapsto \mu_s$ is weak$^*$ continuous on $\mathbb R$.}
	\begin{proof}[Proof of Claim 1]
		For every $s\in\mathbb R$, the potential $s\varphi$ is Lipschitz and
		hence, by ${\rm(H2)}$, belongs to the set $\mathcal R_f$ in
		Proposition~\ref{prop:pressure}(4).  In the notation used there,
		$\mu_s=\nu_{s\varphi}$.  Since the map
		$$
		\mathbb R\longrightarrow C(X),
		\qquad s\longmapsto s\varphi,
		$$
		is continuous in the uniform norm, Proposition~\ref{prop:pressure}(4)
		implies that $s\mapsto\mu_s$ is weak$^*$ continuous.
	\end{proof}
	
	\medskip
	\noindent
	{\it Claim 2: the map $s\mapsto \int \varphi\,d\mu_s$ is nondecreasing on $\mathbb R$.}
	\begin{proof}[Proof of Claim 2]
		Let $s_1<s_2$, and write
		$$
		h_i:=h_{\mu_{s_i}}(f),\qquad
		a_i:=\int \varphi\,d\mu_{s_i}
		\qquad\text{for every }i\in\{1,2\}.
		$$
		Since $\mu_{s_i}$ is the equilibrium state for $s_i\varphi$, we have
		$$
		h_1+s_1a_1\ge h_2+s_1a_2,~
		h_2+s_2a_2\ge h_1+s_2a_1.
		$$
		Then we have
		$
		(s_2-s_1)(a_2-a_1)\ge 0.
		$
		Since $s_2-s_1>0$, it follows that
		$
		a_1\le a_2.
		$
	\end{proof}
	
	\medskip
	\noindent
	{\it Claim 3: the map $s\mapsto h_{\mu_s}(f)$ is continuous on $\mathbb R$,
		nondecreasing on $(-\infty,0]$, and nonincreasing on $[0,\infty)$.}
	\begin{proof}[Proof of Claim 3]
	With the notation above, if $0\le s_1<s_2$, then from
	$
	h_1+s_1a_1\ge h_2+s_1a_2
	$
	and $a_1\le a_2$ we obtain
	$
	h_1-h_2\ge s_1(a_2-a_1)\ge 0,
	$
	so
	$
	h_{\mu_{s_1}}(f)\ge h_{\mu_{s_2}}(f).
	$
	Thus $s\mapsto h_{\mu_s}(f)$ is nonincreasing on $[0,\infty)$.
	
	If $s_1<s_2\le 0$, then from
	$
	h_2+s_2a_2\ge h_1+s_2a_1
	$
	and $a_1\le a_2$ we get
	$
	h_2-h_1\ge s_2(a_1-a_2)\ge 0,
	$
	hence
	$
	h_{\mu_{s_1}}(f)\le h_{\mu_{s_2}}(f),
	$
	so $s\mapsto h_{\mu_s}(f)$ is nondecreasing on $(-\infty,0]$.
	
	Finally,
	$
	h_{\mu_s}(f)=P_f(s\varphi)-s\int \varphi\,d\mu_s.
	$
	By Proposition~\ref{prop:pressure}(2), the map
	$s\mapsto P_f(s\varphi)$ is continuous, and by Claim~1,
	the map
	$
	s\mapsto \int \varphi\,d\mu_s
	$
	is continuous. Therefore $s\mapsto h_{\mu_s}(f)$ is continuous on $\mathbb R$.
\end{proof}

	\medskip
	\noindent
	{\it Claim 4:}
	$
	\mu_s\to \mu_L\quad\text{as }s\to+\infty,
	$ $
	\mu_s\to \mu_R\quad\text{as }s\to-\infty.
	$
	\begin{proof}[Proof of Claim 4]
	Let
	$
	M:=\sup_{\mu\in\mathcal M_f(X)}\int \varphi\,d\mu.
	$
		Then
		$
		\int \varphi\,d\mu_L=M.
		$
		Suppose first that $s>0$. Since
		$
		h_{\mu_L}(f)=0,
		$
		the variational principle gives
	$$
	P_f(s\varphi)\ge h_{\mu_L}(f)+s\int \varphi\,d\mu_L=sM.
	$$
	On the other hand, for every $\mu\in\mathcal M_f(X)$, one has
	$
	h_\mu(f)\le \htop(f)$ and $\int \varphi\,d\mu\le M,
	$
	so
	$$
	P_f(s\varphi)\le \htop(f)+sM.
	$$
	Since
	$
	P_f(s\varphi)=h_{\mu_s}(f)+s\int \varphi\,d\mu_s,
	$
	we obtain
	$$
	sM\le h_{\mu_s}(f)+s\int \varphi\,d\mu_s\le \htop(f)+sM.
	$$
	As $0\le h_{\mu_s}(f)\le \htop(f)$, it follows that
	$
	0\le M-\int \varphi\,d\mu_s\le \frac{\htop(f)}{s}.
	$
	Hence
	$
	\int \varphi\,d\mu_s\to M$ as $s\to+\infty.
	$
	Now let $s_n\to+\infty$ and suppose $\mu_{s_n}\to \nu$ weak$^*$ along a subsequence.
	Then by continuity of $\varphi$,
	$
	\int \varphi\,d\nu=\lim_{n\to\infty}\int \varphi\,d\mu_{s_n}=M.
	$
	Since $\mu_L$ is the unique maximizing measure of $\varphi$, we must have $\nu=\mu_L$.
	Therefore
	$
	\mu_s\to \mu_L$ as $s\to+\infty.
	$
	Similarly, we have
	$
	\mu_s\to \mu_R$ as $s\to-\infty.
	$
\end{proof}

	\medskip
	\noindent
	{\it Claim 5:}
	$
	h_{\mu_s}(f)\to 0\quad\text{as }s\to\pm\infty.
	$
	\begin{proof}[Proof of Claim 5]
	By Claim~4, $\mu_s\to\mu_L$ as $s\to+\infty$. Since the entropy map is upper semicontinuous,
	$
	\limsup\limits_{s\to+\infty} h_{\mu_s}(f)\le h_{\mu_L}(f)=0.
	$
	As entropy is always nonnegative, we obtain
	$
	h_{\mu_s}(f)\to 0$ as $s\to+\infty.
	$
	Similarly, we have
	$
	h_{\mu_s}(f)\to 0$ as $s\to-\infty.
	$
\end{proof}

	\medskip
	\noindent
	{\it Claim 6: if $s_-<0<s_+$ and $\mu_{s_-}=\mu_{s_+}$, then}
	$
	\mu_{s_-}=\mu_{s_+}=m_{\max}.
	$
	\begin{proof}[Proof of Claim 6]
	Since $\int \varphi\,dm_{\max}=0$, we have
	$
	P_f(s\varphi)\ge h_{m_{\max}}(f)+s\int \varphi\,dm_{\max}=\htop(f)
	$
	for every $s\in\mathbb R$. On the other hand,
	$$
	P_f(s\varphi)=h_{\mu_s}(f)+s\int \varphi\,d\mu_s,
	\qquad
	h_{\mu_s}(f)\le \htop(f),
	$$
	so
	$
	s\int \varphi\,d\mu_s=P_f(s\varphi)-h_{\mu_s}(f)\ge 0.
	$
	Hence
	$$
	\int \varphi\,d\mu_s\ge 0\quad\text{if }s>0,\qquad
	\int \varphi\,d\mu_s\le 0\quad\text{if }s<0.
	$$
	
	Now suppose $s_-<0<s_+$ and $\mu_{s_-}=\mu_{s_+}=:\nu$. Then
	$
	\int \varphi\,d\nu=0.
	$
	Therefore
	$
	P_f(s_+\varphi)=h_\nu(f)+s_+\int \varphi\,d\nu=h_\nu(f).
	$
	On the other hand,
	$
	P_f(s_+\varphi)\ge \htop(f).
	$
	So we obtain
	$
	h_\nu(f)=\htop(f).
	$
	By uniqueness of the measure of maximal entropy, $\nu=m_{\max}$.
\end{proof}

	We now define
	$$
	\nu_t=
	\begin{cases}
		\mu_{-\tan(\pi t/2)}, & t\in(-1,1),\\
		\mu_L, & t=-1,\\
		\mu_R, & t=1.
	\end{cases}
	$$
	Since
	$$
	-\tan(\pi t/2)\to +\infty\quad\text{as }t\to -1^+,
	\qquad
	-\tan(\pi t/2)\to -\infty\quad\text{as }t\to 1^-,
	$$
	Claims~1 and 4 imply that $t\mapsto \nu_t$ is continuous on $[-1,1]$. Also,
	$$
	\nu_{-1}=\mu_L,\qquad \nu_0=\mu_0=m_{\max},\qquad \nu_1=\mu_R.
	$$
	Since each $\mu_s$ is a unique equilibrium state, it is ergodic, and hence
	$
		\nu_t\in\mathcal M_f^e(X)$ for any $t\in[-1,1]$.

	Now $t\mapsto -\tan(\pi t/2)$ is strictly decreasing on $(-1,1)$.
	By Claim~3, the map
	$
	s\longmapsto h_{\mu_s}(f)
	$
	is continuous on $\mathbb R$, nondecreasing on $(-\infty,0]$, and nonincreasing on
	$[0,\infty)$. Therefore
	$
	t\longmapsto h_{\nu_t}(f)
	$
	is nondecreasing on $[-1,0]$ and nonincreasing on $[0,1]$.
	
	It remains to prove continuity of $t\mapsto h_{\nu_t}(f)$ at the endpoints
	$t=\pm1$. For $t\in(-1,1)$ we have
	$
	h_{\nu_t}(f)=h_{\mu_{-\tan(\pi t/2)}}(f).
	$
	As $t\to -1^+$, we have $-\tan(\pi t/2)\to +\infty$, so by Claim~5,
	$$
	h_{\nu_t}(f)=h_{\mu_{-\tan(\pi t/2)}}(f)\to 0
	= h_{\mu_L}(f)=h_{\nu_{-1}}(f).
	$$
	Similarly, as $t\to 1^-$, we have $-\tan(\pi t/2)\to -\infty$, and again by Claim~5,
	$$
	h_{\nu_t}(f)=h_{\mu_{-\tan(\pi t/2)}}(f)\to 0
	= h_{\mu_R}(f)=h_{\nu_1}(f).
	$$
	Hence
	$
	t\longmapsto h_{\nu_t}(f)
	$
	is continuous on $[-1,1]$.
	
	Finally,
	$$
	\{\nu_t:t\in[-1,0]\}=\{\mu_s:s\in[0,\infty]\},
	\qquad
	\{\nu_t:t\in[0,1]\}=\{\mu_s:s\in[-\infty,0]\},
	$$
	where we use the conventions $\mu_{+\infty}:=\mu_L$ and
	$\mu_{-\infty}:=\mu_R$.  Since $\mu_L$ and $\mu_R$ are distinct
	zero entropy measures and $m_{\max}$ is the unique measure of maximal
	entropy, we have $\htop(f)>0$ and hence
	$\mu_L,\mu_R\ne m_{\max}$.  The uniqueness of the maximizing and
	minimizing measures of $\varphi$ therefore gives
	$$
	\int\varphi\,d\mu_L>\int\varphi\,dm_{\max}=0
	>\int\varphi\,d\mu_R.
	$$
	Together with the sign inequalities proved in Claim~6, this shows that
	$$
	\mu_L\notin\{\mu_s:s\in[-\infty,0]\},
	\qquad
	\mu_R\notin\{\mu_s:s\in[0,+\infty]\}.
	$$
	Now let a measure belong to both branches, and write it as
	$\mu_{s_+}=\mu_{s_-}$ with
	$s_+\in[0,+\infty]$ and $s_-\in[-\infty,0]$.  The preceding endpoint
	exclusions show that both parameters are finite.  If either parameter is
	zero, the common measure is $\mu_0=m_{\max}$; otherwise
	$s_-<0<s_+$, and Claim~6 again gives that the common measure is
	$m_{\max}$.  Conversely, $m_{\max}=\mu_0$ belongs to both branches.
	Consequently,
	$$
	\{\nu_t:t\in[-1,0]\}\cap \{\nu_t:t\in[0,1]\}=\{m_{\max}\}.
	$$
	This completes the proof.
\end{proof}

\begin{remark}\label{prop:entropy-continuous-on-path-image}
	Let
	$
	C:=\{\nu_t:t\in[-1,1]\}\subset \mathcal M_f(X),
	$
	where $\{\nu_t\}_{t\in[-1,1]}$ is the path given by
	Theorem~\ref{thm:periodic-measure-monotone-path}. Then the entropy map
	$
	\mu\longmapsto h_\mu(f)
	$
	is continuous on $C$ with respect to the weak$^*$ topology.
\end{remark}

\subsection{Proof of Theorem~\ref{thm:main}}

\begin{proof}[Proof of Theorem~\ref{thm:main}]
Set $H=\htop(f)$ and write $K_h=[a,b]$.  Let $\alpha_\ast$ be the
unique maximizer of $h$.  The defining properties of $C_{ms}^{1,H}$ imply
that $h$ is continuous on $[a,b]$; concavity and uniqueness of the maximizer
then imply that
$$
h_-:=h|_{[a,\alpha_\ast]}
\quad\text{and}\quad
h_+:=h|_{[\alpha_\ast,b]}
$$
are, respectively, strictly increasing and strictly decreasing.  Thus
$$
h_-:[a,\alpha_\ast]\longrightarrow[h(a),H],
\qquad
h_+:[\alpha_\ast,b]\longrightarrow[h(b),H]
$$
are homeomorphisms.

Choose distinct zero entropy measures $\mu_L$ and $\mu_R$ as in
{\rm(H4)}.  Theorem~\ref{thm:periodic-measure-monotone-path} provides a
continuous path
$
\{\nu_t\}_{t\in[-1,1]}\subset\mathcal M_f^e(X)
$
from $\mu_L$ to $\mu_R$ through the unique measure of maximal entropy
$m_{\max}=\nu_0$.  Set
$$
e(t):=h_{\nu_t}(f).
$$
Then $e$ is continuous, nondecreasing on $[-1,0]$, and nonincreasing on
$[0,1]$, with
$$
e(-1)=e(1)=0,
\qquad
e(0)=H.
$$
Moreover, the two branches of the path meet only at $m_{\max}$.

Choose $t_-\in[-1,0]$ and $t_+\in[0,1]$ such that
$$
e(t_-)=h(a),
\qquad
e(t_+)=h(b),
$$
and set
$$
C_-:=\{\nu_t:t\in[t_-,0]\},
\qquad
C_+:=\{\nu_t:t\in[0,t_+]\},
\qquad
C:=C_-\cup C_+.
$$
These sets are compact, $C\subset\mathcal M_f^e(X)$, and
$
C_-\cap C_+=\{m_{\max}\}.
$
The monotonicity and continuity of $e$ give
$$
\{h_\mu(f):\mu\in C_-\}=[h(a),H],
\qquad
\{h_\mu(f):\mu\in C_+\}=[h(b),H].
$$

Define $G:C\to K_h$ by
$$
G(\mu)=
\begin{cases}
h_-^{-1}(h_\mu(f)),&\mu\in C_-,\\
h_+^{-1}(h_\mu(f)),&\mu\in C_+.
\end{cases}
$$
This is well defined because the two formulas agree at $m_{\max}$, where
both take the value $\alpha_\ast$.  By
Remark~\ref{prop:entropy-continuous-on-path-image}, the entropy map is
continuous on $C$.  Hence each restriction of $G$ is continuous, and the
pasting lemma shows that $G$ is continuous on $C$.  By construction,
$$
h_\mu(f)=h(G(\mu))
\qquad\text{for every }\mu\in C,
$$
and
$$
G(C_-)= [a,\alpha_\ast],
\qquad
G(C_+)= [\alpha_\ast,b].
$$
In particular, $G(C)=K_h$, so $(C,G)$ satisfies both calibration conditions
in Theorem~\ref{thm:calibrated-map-realization}.

Hypotheses {\rm(H1)} and {\rm(H3)} allow us to apply that theorem with
$m=1$.  We obtain $\varphi\in C(X)$ such that
$$
R(\varphi;X,f)=K_h,
\qquad
\htop(f,L(\varphi,\alpha;X,f))=h(\alpha)
\quad\text{for every }\alpha\in K_h.
$$
If {\rm(H5)} also holds, the final assertion of the same theorem yields,
for every $N\in\mathbb N$, $N$ pairwise non-cohomologous potentials with
the same rotation interval and entropy spectrum.
\end{proof}

\subsection{Proof of Corollary~\ref{cor:KQ-strengthened}}

\begin{proof}[Proof of Corollary~\ref{cor:KQ-strengthened}]
Apply Proposition~\ref{prop:pressure-dual-reduction} with $m=1$.  It gives
a nonempty compact interval $I\subset\mathbb R$ and a function
$h\in C_{ms}^{1,H}$.
By Theorem~\ref{thm:main}, there is $\varphi\in C(X)$ such that
$$
R(\varphi;X,f)=I
$$
and
$$
\htop(f,L(\varphi,\alpha;X,f))=h(\alpha)
\qquad\text{for every }\alpha\in I.
$$
Hypothesis {\rm(H3)} is precisely the saturated property required for the
pressure conclusion of Proposition~\ref{prop:pressure-dual-reduction}.
Consequently,
$$
P_f(t\varphi)=F(t)
\qquad\text{for every }t\in\mathbb R.
$$

If {\rm(H5)} also holds, fix $N\in\mathbb N$.  The multiplicity conclusion
of Theorem~\ref{thm:main} provides pairwise non-cohomologous potentials
$\varphi_1,\ldots,\varphi_N$ realizing the same interval $I$ and spectrum
$h$.  Applying Proposition~\ref{prop:pressure-dual-reduction} to each of
them gives
$$
P_f(t\varphi_j)=F(t)
\qquad\text{for every }t\in\mathbb R
\text{ and }j=1,\ldots,N.
$$
This completes the proof.
\end{proof}

\subsection{Examples}
\begin{theorem}\label{thm:examples-H1-H5}
	The following classes of systems satisfy {\rm(H1)--(H5)}:
	\begin{enumerate}
		\item[(E1)] full shifts;
		\item[(E2)] transitive SFTs;
		\item[(E3)] transitive sofic shifts;
		\item[(E4)] $\beta$-shifts;
		\item[(E5)] $S$-gap shifts;
		\item[(E6)] transitive Anosov diffeomorphisms;
		\item[(E7)] transitive expanding maps.
	\end{enumerate}
\end{theorem}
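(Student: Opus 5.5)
The plan is to verify (H1)--(H5) class by class, relying on standard results from the literature. Most of the work consists of checking that each hypothesis is already known for these systems. Since the classes are nested (full shift $\subset$ transitive SFT $\subset$ transitive sofic shift), and the uniformly hyperbolic cases reduce to SFTs via Markov partitions, I would organize the argument by hypothesis rather than by class.

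\textbf{(H1).} Every system on the list is expansive: subshifts trivially, Anosov diffeomorphisms and expanding maps by hyperbolicity. Expansive systems have upper semicontinuous entropy maps (Walters).

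\textbf{(H2).} For transitive SFTs I would use Bowen's theorem: each Lipschitz potential has a unique equilibrium state. The Anosov and expanding cases follow from Bowen's specification-based theory, applied on each transitive piece, or through Markov partitions. Transitive sofic shifts and $\beta$-shifts are handled by Climenhaga--Thompson's non-uniform specification theorem. For $\beta$-shifts, uniqueness for H\"older potentials is also due to Walters and Hofbauer. $S$-gap shifts are covered by Climenhaga--Thompson as well.

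\textbf{(H3).} The saturated property $\htop(f,G_\mu)=h_\mu(f)$ with $G_\mu\ne\emptyset$ follows from Pfister--Sullivan. It holds whenever the system has the $g$-almost product property together with uniform separation. These properties hold for all systems with specification, which covers mixing SFTs, mixing sofic shifts, and Anosov/expanding maps on mixing components. They also hold for $\beta$-shifts and $S$-gap shifts, which Pfister--Sullivan treat explicitly, with expansivity supplying uniform separation. The transitive but non-mixing case is the main technical obstacle. My first approach would be to reduce to the mixing case through the cyclic decomposition and Lemma~\ref{mp:lem:cyclic-measure-transfer}. A point in $G_\mu$ for $f$ comes from a generic point of $k\mu|_{\Omega}$ for $f^k$, and Bowen entropy scales by $1/k$. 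I would check that this scaling is compatible with the definition of $G_\mu$. Failing that, I would cite the transitive versions of these results in the literature (for example Dong--Oprocha--Tian).

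\textbf{(H4) and (H5).} For (H4), a nontrivial transitive SFT, sofic shift, or hyperbolic system has infinitely many periodic orbits, and periodic measures are uniquely ergodic with zero entropy. $\beta$-shifts and $S$-gap shifts also contain distinct periodic orbits (for example $0^\infty$ and another periodic word in the language). For (H5), I would show that periodic measures are dense in $\mathcal M_f(X)$. This follows from specification or closing-lemma arguments for SFTs, sofic shifts, Anosov diffeomorphisms and expanding maps (Sigmund). For $\beta$-shifts and $S$-gap shifts it follows from the $g$-almost product property plus approximation of words by periodic points; here I would cite Pfister--Sullivan or Climenhaga--Thompson.

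Beyond the non-mixing reduction in (H3), the other delicate point is (H2) for sofic shifts and $S$-gap shifts. There I would make sure the cited uniqueness theorem covers all Lipschitz potentials, not just potentials with extra regularity.
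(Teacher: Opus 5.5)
Your overall route is the paper's: its proof likewise checks (H1)--(H5) one hypothesis at a time from the literature, using expansiveness, Bowen/Walters/Climenhaga--Thompson uniqueness, saturated-set theorems of Pfister--Sullivan type, and density of periodic measures. The gap is in the two concrete mechanisms you propose for the non-mixing and non-uniform cases: they fail for some members of the listed classes.

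Take the odd shift $X$, in which consecutive $1$'s are separated by an odd number of $0$'s. It is a transitive sofic shift, and it is also the $S$-gap shift with $S$ the odd positive integers. Any two $1$'s in a point of $X$ lie at even distance, so $X$ is not mixing. Its fixed point $0^\infty$ would have to lie in two disjoint pieces of any clopen cyclic decomposition with $p\ge2$. So no such decomposition exists, and Lemma~\ref{mp:lem:cyclic-measure-transfer} cannot reduce $X$ to a mixing system.

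The same example rules out the $g$-almost product property. Take $z=(10)^\infty$, an odd block length $n$, and $\varepsilon<1$, which forces agreement of zeroth coordinates. A point $y$ with $y_j=z_j$ and $y_{n+j}=z_j$ for all but $g(n)=o(n)$ indices $j\in[0,n)$ would have $1$'s at about $n/2$ even positions and about $n/2$ odd positions. That is impossible in $X$. The same parity obstruction, and the same fixed point, occur for every $S$-gap shift with $S$ infinite and $\gcd\{s+1:s\in S\}>1$. So your (H3) argument, and your (H5) argument where it invokes the almost product property, do not cover these systems.

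Both cases can be repaired. Your cyclic reduction is valid for transitive SFTs and for the hyperbolic cases. A transitive sofic shift reduces to that case by lifting.
\begin{itemize}
\item Take an irreducible right-resolving presentation, i.e.\ a $1$-block label map $\pi:Y\to X$ from a transitive SFT $Y$. It is finite-to-one, so every $\mu\in\mathcal M_\sigma(X)$ lifts to some $\nu\in\mathcal M_\sigma(Y)$ with $h_\nu(\sigma)=h_\mu(\sigma)$, and $\pi(G_\nu)\subset G_\mu$.
\item Each word of $X$ labels at most $|V|$ paths, where $|V|$ is the number of vertices. Hence cylinder covers of $\pi(Z)$ pull back to covers of $Z$ with at most $|V|$ times as many cylinders, and $\htop(\sigma,\pi(Z))\ge\htop(\sigma,Z)$.
\item Therefore $\htop(\sigma,G_\mu)\ge\htop(\sigma,G_\nu)=h_\mu(\sigma)$, and Bowen's upper bound gives equality.
\item The same lift gives (H2): $\varphi\circ\pi$ is Lipschitz, $P_Y(\varphi\circ\pi)=P_X(\varphi)$, and lifts preserve entropy, so uniqueness on $Y$ passes down. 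It also gives (H5), by pushing forward periodic measures.
\end{itemize}
For $S$-gap shifts with $S$ infinite, replace fixed-position gluing by gluing with bounded variable gaps. Words that begin and end with $1$ can be joined by a $0$-run of length $\min S$, and long $0$-runs with lengths in $S$ approximate $\delta_{0^\infty}$. This gives density of periodic measures directly. It is also the kind of non-uniform structure the paper's proof points to when it invokes saturated-set results beyond Pfister--Sullivan. For finite $S$, $X_S$ is a transitive SFT and is covered by the SFT case.
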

\begin{proof}
	We recall the standard results that imply the five hypotheses. The upper semicontinuity of the entropy map in {\rm(H1)} is standard for expansive systems. See, for instance, \cite{Bowen2008,DenkerGrillenbergerSigmund1976,Walters1982,LindMarcus2021}.
	
	For {\rm(H2)}, Lipschitz potentials on the symbolic systems, with respect to the standard symbolic metrics, have the Bowen property or summable variation. Hence the standard thermodynamic formalism for potentials with the Bowen property, together with its non-uniform extensions, gives uniqueness of equilibrium states for such potentials; see \cite{Bowen2008,Walters1978,ClimenhagaThompson2013,ClimenhagaThompsonYamamoto2017}. The corresponding uniqueness theorem for Lipschitz potentials on transitive Anosov diffeomorphisms and transitive expanding maps is classical; see \cite{Bowen2008,Bowen1974}.
	
	For each system listed above, the saturated property in {\rm(H3)} follows from the appropriate combination of the specification property, the almost product property, the shadowing property, and suitable non-uniform structures, together with the corresponding saturated set results; see \cite{PfisterSullivan2007,ZhaoChen2018,DongHouTian2022}.

	Finally, {\rm(H4)} and {\rm(H5)} follow from approximation by periodic measures. In the systems listed above, periodic measures are ergodic, have zero entropy, and are uniquely ergodic on their supports.  There are at least two distinct periodic orbits in each nontrivial case, giving {\rm(H4)}.  Moreover, periodic measures, and hence zero entropy ergodic measures, are dense in the set of invariant probability measures in these systems; see \cite{Sigmund1970,GelfertKwietniak2018,HouTianYuan2023}. 
\end{proof}

\section{Stability properties of the spectrum map}\label{Section:discontinuity}
\label{sec:maximizing-discontinuity}
This section studies the stability of the spectrum map and proves
Theorems~\ref{thm:lower-continuous} and~\ref{thm:not-continuous}. We first
establish lower semicontinuity for vector-valued potentials of every finite
dimension when the entropy map is upper semicontinuous and the system has the
saturated property. We then prove that upper semicontinuity fails on a dense
subset of scalar potentials for nontrivial transitive two-sided SFTs.

We use the notation $\Gamma_h$, $\widehat\Gamma_h$, $E_m$, and
$\mathscr E_m$ introduced in
Subsection~\ref{subsec:intro-spectrum-stability}.  By
Proposition~\ref{prop:why-ms}, $E_m(\Phi)$ is upper semicontinuous and
concave on the compact convex set $R(\Phi;X,f)$.  Hence
$\widehat\Gamma_{E_m(\Phi)}$ is compact and the map $\mathscr E_m$ is well
defined.  When $m=1$, we abbreviate $E_1$ to $E$; in this case the spectrum
is continuous and $\widehat\Gamma_{E(\varphi)}=\Gamma_{E(\varphi)}$.
\subsection{Proof of Theorem~\ref{thm:lower-continuous}}
\begin{proof}[Proof of Theorem~\ref{thm:lower-continuous}]
Fix $m\geq1$ and $\Phi\in C(X,\mathbb R^m)$, and let
$\Phi_n\to\Phi$ uniformly.  We first show that every point of
$\Gamma_{E_m(\Phi)}$ can be approximated by points of
$\Gamma_{E_m(\Phi_n)}$.

Fix
$
p=(\alpha,E_m(\Phi)(\alpha))\in\Gamma_{E_m(\Phi)}.
$
By Proposition~\ref{prop:shifts-facts} and upper semicontinuity of the
entropy map, there exists $\mu_\ast\in\mathcal M_f(X)$ such that
$$
\int\Phi\,d\mu_\ast=\alpha,
\qquad
h_{\mu_\ast}(f)=E_m(\Phi)(\alpha).
$$
For each $n$, set
$
\alpha_n:=\int\Phi_n\,d\mu_\ast.
$
Then $\alpha_n\to\alpha$.  Choose a measure $\nu_n$ so that
$$
\int\Phi_n\,d\nu_n=\alpha_n,
\qquad
h_{\nu_n}(f)=E_m(\Phi_n)(\alpha_n).
$$
Then we have
$$
E_m(\Phi_n)(\alpha_n)
\geq h_{\mu_\ast}(f)
=E_m(\Phi)(\alpha).
$$

To obtain the reverse asymptotic inequality, take a subsequence along which
$E_m(\Phi_n)(\alpha_n)$ converges to its limsup.  After passing to a
further subsequence, compactness of $\mathcal M_f(X)$ gives
$\nu_n\to\nu\in\mathcal M_f(X)$.  Uniform convergence of the potentials and
weak$^*$ convergence of the measures imply
$
\int\Phi\,d\nu
=\lim_{n\to\infty}\int\Phi_n\,d\nu_n
=\lim_{n\to\infty}\alpha_n
=\alpha.
$
Consequently, the conditional variational principle and upper
semicontinuity of entropy give
$$
\limsup_{n\to\infty}E_m(\Phi_n)(\alpha_n)
=\limsup_{n\to\infty}h_{\nu_n}(f)
\leq h_\nu(f)
\leq E_m(\Phi)(\alpha).
$$
It follows that
$
E_m(\Phi_n)(\alpha_n)\longrightarrow E_m(\Phi)(\alpha).
$
Thus the points
$
p_n:=(\alpha_n,E_m(\Phi_n)(\alpha_n))
\in\Gamma_{E_m(\Phi_n)}
$
converge to $p$.
We have therefore proved that
$$
\operatorname{dist}
\bigl(p,\widehat\Gamma_{E_m(\Phi_n)}\bigr)\longrightarrow0
\qquad\text{for every }p\in\Gamma_{E_m(\Phi)}.
$$
Fix $\varepsilon>0$.  Since $\Gamma_{E_m(\Phi)}$ is dense in the compact set
$\widehat\Gamma_{E_m(\Phi)}$, there are
$q_1,\ldots,q_k\in\Gamma_{E_m(\Phi)}$ such that every
$p\in\widehat\Gamma_{E_m(\Phi)}$ satisfies $|p-q_i|<\varepsilon/2$ for some
$i$.  For all sufficiently large $n$,
$$
\operatorname{dist}
\bigl(q_i,\widehat\Gamma_{E_m(\Phi_n)}\bigr)<\varepsilon/2
\qquad\text{for every }i=1,\ldots,k.
$$
For each $p\in\widehat\Gamma_{E_m(\Phi)}$, choose $i$ as above.  Then
$$
\operatorname{dist}
\bigl(p,\widehat\Gamma_{E_m(\Phi_n)}\bigr)
\le |p-q_i|
+\operatorname{dist}
\bigl(q_i,\widehat\Gamma_{E_m(\Phi_n)}\bigr)
<\varepsilon.
$$
Thus the convergence is uniform on $\widehat\Gamma_{E_m(\Phi)}$.  Hence
$
e\bigl(\widehat\Gamma_{E_m(\Phi)},
\widehat\Gamma_{E_m(\Phi_n)}\bigr)\longrightarrow0.
$
Thus $\mathscr E_m$ is lower semicontinuous at $\Phi$.
\end{proof}

\subsection{Perturbations creating many maximizing measures}
The following lemma is a direct consequence of the perturbation argument in \cite{Shinoda2018}.
Indeed, the continuous path $\{\mu_t\}_{t\in[0,1]} \subset \mathcal M_\sigma^e(X)$ with countable fibers gives rise to a non-atomic Borel probability measure on $\mathcal M_\sigma(X)$ that is supported on ergodic measures and has $\mu$ in its support.
Therefore one can apply the same Bishop--Phelps argument as in \cite[Proposition 4]{Shinoda2018} to obtain, after an arbitrarily small perturbation of the potential, uncountably many ergodic maximizing measures along this path.
\begin{lemma}\label{lem:Shinoda2018}
	Let $(X,\sigma)$ be a subshift, let $\varphi\in C(X)$, and let
	$
	\mu\in \mathcal M_{\max}(\varphi)\cap \mathcal M_\sigma^e(X).
	$
	Assume that there exists a continuous map
	$
	[0,1]\ni t\mapsto \mu_t\in \mathcal M_\sigma^e(X)
	$
	such that $\mu_0=\mu$ and, for every
	$$
	\nu\in \{\mu_t:t\in[0,1]\},
	$$
	the set
	$
	\{t\in[0,1]:\mu_t=\nu\}
	$
	is countable. Then for any $\varepsilon>0$, there exists $\psi\in C(X)$ such that
	$
	\|\varphi-\psi\|_\infty<\varepsilon
	$
	and $\mathcal M_{\max}(\psi)$ contains uncountably many elements of
	$
	\{\mu_t\}_{t\in[0,1]}.
	$
\end{lemma}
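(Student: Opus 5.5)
We follow the Bishop--Phelps strategy of \cite[Proposition~4]{Shinoda2018}, with the path supplying an atomless ``target'' measure on the ergodic measures. Put $\Gamma:=\{\mu_t:t\in[0,1]\}$, a compact subset of $\mathcal M_\sigma^e(X)$. Let $\lambda$ be Lebesgue measure on $[0,1]$, and let $\Xi:=\gamma_*\lambda$ be its pushforward under $\gamma(t)=\mu_t$; this is a Borel probability measure on $\mathcal M_\sigma(X)$. Since every fiber $\gamma^{-1}(\nu)$ is countable, hence Lebesgue null, $\Xi$ has no atoms. Also $\Xi(\Gamma)=1$, and every neighbourhood of $\mu=\mu_0$ has positive $\Xi$-mass, because it contains $\mu_t$ for all small $t$. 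Consequently, every $\Xi$-positive set is uncountable, and so is every set of $t$ of positive Lebesgue measure together with its image under $\gamma$.

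Step 1 is to make $\mu$ the \emph{unique} maximizing measure after a small perturbation. We take $\varphi_1:=\varphi-\delta\,g$, where $g\ge0$ is a continuous function that vanishes exactly on $\operatorname{Supp}(\mu)$. This may fail when $\operatorname{Supp}(\mu)$ carries other invariant measures. In that case we instead use the standard fact that the ergodic measure $\mu$ is an exposed point of $\mathcal M_\sigma(X)$ for a dense set of functionals. Bishop--Phelps applied near $\varphi$ gives such a $\varphi_1$ with $\|\varphi-\varphi_1\|_\infty<\varepsilon/3$ and $\mathcal M_{\max}(\varphi_1)=\{\mu\}$.

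Step 2 makes a positive-measure portion of the path maximizing. Consider the closed convex set $\mathcal F:=\overline{\operatorname{conv}}(\Gamma')$, where $\Gamma':=\{\mu_t:t\in[0,\tau]\}$ and $\tau$ is small. Choose $\tau$ small enough that $\int\varphi_1\,d\mu_t>\max\int\varphi_1-\varepsilon/3$ on $\Gamma'$, which is possible by continuity. Proposition~\ref{prop-compact-ergodic-face} makes $\mathcal F$ a closed face and a Bauer simplex with extreme points $\Gamma'$. The Bauer simplex property lets us prescribe continuous affine data on $\mathcal F$, namely the constant $c:=\max_{\mathcal M_\sigma(X)}\int\varphi_1$. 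We then seek $\psi$ with $\int\psi\,d\nu=c$ for all $\nu\in\Gamma'$, $\int\psi\,d\nu\le c$ for all $\nu$, and $\|\psi-\varphi_1\|_\infty<2\varepsilon/3$. To build it, apply Lemma~\ref{thm:lazar} with $K=\mathcal M_\sigma(X)$, face $F=\mathcal F$, and the set-valued map
\[
T(\nu):=\Bigl[\int\varphi_1\,d\nu-\tfrac{\varepsilon}{3},\,\min\Bigl\{c,\int\varphi_1\,d\nu+\tfrac{\varepsilon}{3}\Bigr\}\Bigr].
\]
$T$ is convex-valued and affine, since the upper endpoint is concave and the lower is affine. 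It is lower semicontinuous because it is an interval with continuous endpoints and nonempty interior. The constant $c$ lies in $T$ on $\mathcal F$ by the choice of $\tau$. Lazar gives a continuous affine selection, and Lemma~\ref{lem:affine-representation} turns it into $\psi\in C(X)$. The resulting $L(\nu)=\int\psi\,d\nu$ lies within $\varepsilon/3$ of $\int\varphi_1\,d\nu$ only on invariant measures, not pointwise.

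The main obstacle is upgrading this closeness on $\mathcal M_\sigma(X)$ to uniform closeness of functions. We first try to absorb the gap by a coboundary: two functions whose integrals differ by at most $\varepsilon/3$ on all invariant measures admit a coboundary correction bringing them uniformly within $\varepsilon/3+\epsilon'$ of each other. This is the standard sup-norm distance to coboundaries, from the duality $\inf_u\|f-u+u\circ\sigma\|_\infty=\max_\nu\int f\,d\nu$, applied to $\pm f$. Adding a coboundary does not change maximizing measures. Granting this, $\psi$ plus a coboundary is within $\varepsilon$ of $\varphi$, and its maximizing set contains $\Gamma'$, which is uncountable because $\Xi(\Gamma')>0$ and $\Xi$ is atomless. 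If the coboundary estimate gives only a one-sided bound, we fall back on Shinoda's original Bishop--Phelps argument in the dual of $C(X)$, with $\Xi$ restricted to $\Gamma'$ playing the role of his non-atomic measure.

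Step 1 may then be unnecessary, since Step 2 only used $\mu\in\mathcal M_{\max}(\varphi)$ and continuity. We will verify which version is cleaner.
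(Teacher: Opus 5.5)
Your Step~2 works, and it is a genuinely different proof from the paper's.

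\textbf{The paper's route.} The paper only sketches the lemma. It pushes the path forward to a non-atomic measure on $\mathcal M_\sigma^e(X)$ with $\mu$ in its support, and then invokes Shinoda's Bishop--Phelps argument. That produces a perturbation with uncountably many ergodic maximizing measures, but with no control over which ones.

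\textbf{Your route.} You reuse the paper's own Section~3 machinery: Proposition~\ref{prop-compact-ergodic-face} on the face $\mathcal F=\overline{\operatorname{conv}}\{\mu_t:0\le t\le\tau\}$, Lemma~\ref{thm:lazar} applied to your interval-valued map $T$, and Lemma~\ref{lem:affine-representation}. This is constructive and avoids importing an external argument. It also proves more: since $L\equiv c$ on $\mathcal F$, the whole arc, indeed all of $\mathcal F$, lies in $\mathcal M_{\max}(\psi)$. In Theorem~\ref{thm:beta-shift-uncountably-many-distinct-entropies} this immediately gives a continuum of full-support maximizing measures with distinct entropies. The price is one extra quantitative fact: norm control in the affine representation.

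\textbf{Simplifications.} Step~1 can be dropped, as you suspected; its justification is also vague. The measure $\Xi$ is unnecessary: $[0,\tau]$ is uncountable and is the union of the countable fibres over $\Gamma'$, so $\Gamma'$ is uncountable. One small detail: you need $c\in T(\nu)$ on all of $\mathcal F$, not only on $\Gamma'$. This holds because $\{\nu:\int\varphi\,d\nu\ge c-\varepsilon/3\}$ is closed and convex.

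\textbf{The step to repair: the coboundary correction.} The identity you wrote, $\inf_u\|f-u+u\circ\sigma\|_\infty=\max_\nu\int f\,d\nu$, is false as stated, since the right side can be negative. Its one-sided version, $\inf_u\sup(f-u+u\circ\sigma)=\max_\nu\int f\,d\nu$, applied to $\pm f$ gives two bounds with possibly different transfer functions $u$. What you need, and what is true, is the two-sided statement
\[
\inf_{u\in C(X)}\|g-u+u\circ\sigma\|_\infty=\max_{\nu\in\mathcal M_\sigma(X)}\Bigl|\int g\,d\nu\Bigr|.
\]
Here is why it holds.
\begin{itemize}
\item By Hahn--Banach, the left side equals $\sup|\ell(g)|$ over functionals $\ell$ with $\|\ell\|\le1$ that annihilate all coboundaries.
\item Such $\ell$ are exactly the $\sigma$-invariant signed measures.
\item Their Jordan parts are again invariant. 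Indeed, $\ell=\sigma_*\ell^+-\sigma_*\ell^-$ forces $\sigma_*\ell^+\ge\ell^+$ by minimality of the Jordan decomposition, and the total masses agree, so $\sigma_*\ell^+=\ell^+$.
\item Hence $\ell=a\nu_1-b\nu_2$ with $\nu_i\in\mathcal M_\sigma(X)$ and $a+b\le1$, which gives the upper bound. The reverse inequality is trivial.
\end{itemize}
Now apply this to $g=\psi-\varphi$, which satisfies $|\int g\,d\nu|\le\varepsilon/3$ for every invariant $\nu$. It yields a coboundary $w$ with $\|\psi-w-\varphi\|_\infty<\varepsilon$, and $\mathcal M_{\max}(\psi-w)=\mathcal M_{\max}(\psi)$. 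With this lemma in place, your argument is complete and no fallback to Shinoda is needed.
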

Combining the monotone path construction with the perturbation argument, we
obtain the following density statement.
\begin{theorem}\label{thm:beta-shift-uncountably-many-distinct-entropies}
	Let $(X,\sigma)$ be a nontrivial transitive two-sided SFT. Then there exists a dense subset
	$D\subset C(X)$ such that for every $\varphi\in D$, the set
	$\mathcal M_{\max}(\varphi)$ contains uncountably many ergodic measures with full support whose
	metric entropies are pairwise distinct.
\end{theorem}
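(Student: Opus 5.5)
Fix $\varphi_0\in C(X)$ and $\varepsilon>0$; we look for $\psi$ with $\|\psi-\varphi_0\|_\infty<\varepsilon$ having the stated property. The plan is to combine Lemma~\ref{lem:periodic-measure-monotone-path-transitive-sft} with Lemma~\ref{lem:Shinoda2018}. First perturb $\varphi_0$ so that it has a periodic maximizing measure. Periodic measures are dense in $\mathcal M_\sigma(X)$ for a transitive SFT, and one can use the known fact that potentials admitting a periodic maximizing measure are dense in $C(X)$. More directly: choose a periodic orbit $O$ and set $\varphi_1=\varphi_0-\tfrac{\varepsilon}{2}\,d_X(\cdot,O)$, and then add a small correction so that $\mu_x$ becomes maximizing. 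If this is awkward, we instead invoke the Bishop--Phelps type argument of \cite{Shinoda2018}, which already produces an ergodic maximizing measure close to any given one after a small perturbation; a periodic one can then be obtained by the density of periodic measures. Call the result $\varphi_1$, with $\|\varphi_1-\varphi_0\|<\varepsilon/2$ and $\mu_x\in\mathcal M_{\max}(\varphi_1)\cap\mathcal M^e_\sigma(X)$.

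Next apply Lemma~\ref{lem:periodic-measure-monotone-path-transitive-sft} to $\mu_x$. This gives a continuous path $\{\mu_t\}\subset\mathcal M^e_\sigma(X)$ with $\mu_0=\mu_x$, strictly increasing entropy, and full support for $t>0$. Strict monotonicity of $t\mapsto h_{\mu_t}(\sigma)$ makes $t\mapsto\mu_t$ injective, so every fiber is a singleton and in particular countable. Lemma~\ref{lem:Shinoda2018} then yields $\psi$ with $\|\psi-\varphi_1\|<\varepsilon/2$ such that $\mathcal M_{\max}(\psi)$ contains uncountably many $\mu_t$. Removing at most $t=0$, these remaining measures are ergodic, have full support, and have pairwise distinct entropies, again by injectivity of the entropy along the path. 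Hence $\psi\in D$, where $D$ denotes the set of potentials with this property, and $D$ is dense.

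The main obstacle is the first step: producing a nearby potential with a periodic (or at least suitable ergodic) maximizing measure. The path lemma requires a periodic starting point. If the periodic step fails, I would generalize Lemma~\ref{lem:periodic-measure-monotone-path-transitive-sft} to start from any ergodic maximizing measure of a Lipschitz approximation. Alternatively, I would use the standard result that, on SFTs, potentials with a periodic maximizing measure are dense; on SFTs this follows from Lipschitz approximation together with Contreras' theorem or the results of Yuan--Hunt/Quas--Siefken type.
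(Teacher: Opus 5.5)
Your argument is essentially the paper's: perturb to a potential with a periodic ergodic maximizing measure, run Lemma~\ref{lem:periodic-measure-monotone-path-transitive-sft} from that measure, use injectivity of the path to meet the countable-fiber hypothesis of Lemma~\ref{lem:Shinoda2018}, and discard $\mu_0$ at the end. For the first step, only your option of citing the density in $C(X)$ of potentials with a periodic maximizing measure actually works; the paper gets this from density of periodic measures plus Morris's Lemma~3.3, and more simply, a locally constant approximation of $\varphi_0$ always has a periodic maximizing measure by the maximum-mean-cycle argument. Your other two options fail as stated: the explicit $d_X(\cdot,O)$ penalty comes with an unspecified correction, and ``a nearby periodic measure exists by density'' does not by itself make that periodic measure maximizing.
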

\begin{proof}
	Set
	$
	\mathcal P
	:=
	\left\{
	\mu_x :
	x\in X \text{ is a periodic point}\right\}.
	$
	Let $p$ be the period of $(X,\sigma)$, and let
	$$
	X=X_0\sqcup X_1\sqcup\cdots\sqcup X_{p-1}
	$$
	be its cyclic decomposition. Put $T=\sigma^p|_{X_0}$. Then
	$(X_0,T)$ is a mixing two-sided SFT. By \cite{Sigmund1970}, the periodic
	orbit measures are dense in $\mathcal M_T(X_0)$. Moreover,
	Lemma~\ref{mp:lem:cyclic-measure-transfer} shows that the cyclic averaging
	map
	$$
	\mathcal A:\mathcal M_T(X_0)\longrightarrow\mathcal M_\sigma(X),
	\qquad
	\mathcal A(\rho)=\frac1p\sum_{i=0}^{p-1}(\sigma^i)_*\rho,
	$$
	is an affine homeomorphism. If $\rho$ is a periodic orbit measure for
	$T$, then $\mathcal A(\rho)$ is a periodic orbit measure for $\sigma$.
	Consequently, $\mathcal P$ is dense in $\mathcal M_\sigma(X)$. Since
	$\mathcal P\subset \mathcal M_\sigma^e(X)$, it follows that
	$\mathcal P$ is dense in $\mathcal M_\sigma^e(X)$ as well. Then
	by \cite[Lemma 3.3]{Morris2010}, the set
	$$
	\mathcal D_{\mathrm{per}}
	:=
	\left\{
	\phi\in C(X):
	\mathcal M_{\max}(\phi)=\{\mu_x\}
	\text{ for some }\mu_x\in \mathcal P
	\right\}
	$$
	is dense in $C(X)$.
	
	Fix $\phi_0\in C(X)$ and $\varepsilon>0$. Choose
	$\phi_1\in \mathcal D_{\mathrm{per}}$ such that
	$
	\|\phi_1-\phi_0\|_\infty<\frac{\varepsilon}{2}.
	$
	Then there exists a
	periodic point $x\in X$ such that
	$
	\mathcal M_{\max}(\phi_1)=\{\mu_x\}.
	$
	Since $\mu_x$ is supported on a periodic orbit, we have
	$
	\mu_x\in \mathcal M_{\max}(\phi_1)\cap \mathcal M_\sigma^e(X).
	$
	Moreover, by Lemma~\ref{lem:periodic-measure-monotone-path-transitive-sft}, there exists
	a continuous path
	$$
	[0,1]\ni t\mapsto \mu_t\in \mathcal M_\sigma^e(X)
	$$
	such that $\mu_0=\mu_x$, the map
	$
	t\mapsto h_{\mu_t}(\sigma)
	$
	is strictly increasing on $[0,1]$, and $\operatorname{Supp}(\mu_t)=X$ for any $t\in(0,1]$. In particular, the path is injective, and hence
	for every
	$
	\nu\in \{\mu_t:t\in[0,1]\}
	$
	the set
	$
	\{t\in[0,1]:\mu_t=\nu\}
	$
	is countable. Therefore all the assumptions of Lemma~\ref{lem:Shinoda2018} are
	satisfied. Applying Lemma~\ref{lem:Shinoda2018} to $\phi_1$ and $\mu_x$, we obtain
	$\phi\in C(X)$ such that
	$
	\|\phi-\phi_1\|_\infty<\frac{\varepsilon}{2},
	$
	and the set
	$$
	\mathcal U
	:=
	\mathcal M_{\max}(\phi)\cap\{\mu_t:t\in[0,1]\}
	$$
	is uncountable.  Removing the single possible endpoint measure
	$\mu_0=\mu_x$ leaves the uncountable set
	$$
	\mathcal U\setminus\{\mu_0\}
	\subset
	\mathcal M_{\max}(\phi)\cap\{\mu_t:t\in(0,1]\}.
	$$
	Every measure in this set has full support, while the strict increase of
	$t\mapsto h_{\mu_t}(\sigma)$ shows that their metric entropies are pairwise
	distinct.
	Finally,
	$
	\|\phi-\phi_0\|_\infty
	\le
	\|\phi-\phi_1\|_\infty+\|\phi_1-\phi_0\|_\infty
	<
	\varepsilon.
	$
\end{proof}

\subsection{Proof of Theorem~\ref{thm:not-continuous}}

\begin{proof}[Proof of Theorem~\ref{thm:not-continuous}]
Now let $D\subset C(X)$ be the dense subset given by
Theorem~\ref{thm:beta-shift-uncountably-many-distinct-entropies}, and fix
$\varphi\in D$. Then $\mathcal M_{\max}(\varphi)$ contains uncountably many
ergodic measures with pairwise distinct entropies, so we may choose
$$
\mu_1,\mu_2\in \mathcal M_{\max}(\varphi)\cap \mathcal M_\sigma^e(X)
$$
such that
$
h_{\mu_1}(\sigma)<h_{\mu_2}(\sigma).
$
Let
$
\alpha_\ast:=\max_{\nu\in\mathcal M_\sigma(X)}\int\varphi\,d\nu.
$
Then
$
\int\varphi\,d\mu_1=\int\varphi\,d\mu_2=\alpha_\ast,
$
hence by Proposition~\ref{prop:shifts-facts},
$$
E(\varphi)(\alpha_\ast)\ge h_{\mu_2}(\sigma)>h_{\mu_1}(\sigma).
$$

By \cite[Theorem 3.7]{Jenkinson2006}, there exists $\psi\in C(X)$ such that
$
\mathcal M_{\max}(\psi)=\{\mu_1\}.
$
Replacing $\psi$ by
$
\psi-\max_{\nu\in\mathcal M_\sigma(X)}\int \psi\,d\nu+\alpha_\ast,
$
we may assume
$$
\max_{\nu\in\mathcal M_\sigma(X)}\int \psi\,d\nu=\alpha_\ast
\quad\text{and}\quad
\mathcal M_{\max}(\psi)=\{\mu_1\}.
$$

For $t\in(0,1)$, define
$$
\varphi_t:=(1-t)\varphi+t \psi.
$$
Then $\varphi_t\to\varphi$ in $C(X)$ as $t\to0$. Moreover,
$\mu_1\in\mathcal M_{\max}(\varphi_t)$, since
$
\int\varphi_t\,d\mu_1=(1-t)\alpha_\ast+t\alpha_\ast=\alpha_\ast.
$
If $\nu\in\mathcal M_{\max}(\varphi_t)$, then
$$
\int\varphi_t\,d\nu=(1-t)\int\varphi\,d\nu+t\int \psi\,d\nu\le (1-t)\alpha_\ast+t\alpha_\ast=\alpha_\ast.
$$
Since equality holds, we have
$
\int\varphi\,d\nu=\alpha_\ast
$
and
$
\int \psi\,d\nu=\alpha_\ast,
$
so $\nu\in\mathcal M_{\max}(\psi)=\{\mu_1\}$. Therefore
$
\mathcal M_{\max}(\varphi_t)=\{\mu_1\}.
$
In particular, by Proposition~\ref{prop:shifts-facts}, for every $t\in(0,1)$,
$
E(\varphi_t)(\alpha_\ast)=h_{\mu_1}(\sigma),
$
so the point
$$
p:=(\alpha_\ast,h_{\mu_1}(\sigma))
$$
belongs to $\Gamma_{E(\varphi_t)}$ for every $t\in(0,1)$.

On the other hand,
$
E(\varphi)(\alpha_\ast)\ge h_{\mu_2}(\sigma)>h_{\mu_1}(\sigma),
$
hence
$
p\notin \Gamma_{E(\varphi)}.
$
Since $\Gamma_{E(\varphi)}$ is compact in $\mathbb R^2$, it follows that
$$
\delta:=\operatorname{dist}(p,\Gamma_{E(\varphi)})>0.
$$
Consequently,
$$
e\bigl(\Gamma_{E(\varphi_t)},\Gamma_{E(\varphi)}\bigr)
\ge
\operatorname{dist}(p,\Gamma_{E(\varphi)})
=
\delta
\qquad\text{for every }t\in(0,1).
$$
Since $\varphi_t\to\varphi$ as $t\to0$, the map $\mathscr E_1$ is not upper
semicontinuous at $\varphi$.  
\end{proof}

\section*{Appendix: Proof of
Lemma~\ref{mp:lem:one-measure-sft-approximation}}
\label{app:proof-local-sft-approximation}
\addcontentsline{toc}{section}{Appendix: Proof of
Lemma~\ref{mp:lem:one-measure-sft-approximation}}

\begin{proof}[Proof of Lemma~\ref{mp:lem:one-measure-sft-approximation}]
Let $A$ be an irreducible zero-one transition matrix on the finite alphabet
$\mathcal A$ defining $X=X_A$.
Choose an integer $s\ge1$, functions $\varphi_1,\ldots,\varphi_s\in C(X)$, and $\eps>0$ such that
$$
\mathcal U_0
=
\left\{
\nu\in \mathcal M_\sigma(X):
\left|\int\varphi_i\,d\nu-\int\varphi_i\,d\mu\right|<\eps
\text{ for }1\le i\le s
\right\}
\subset U.
$$
Locally constant functions are uniformly dense in $C(X)$.  Hence, after choosing a common radius $r\ge0$, there are functions $\psi_1,\ldots,\psi_s$ depending only on the coordinates in $[-r,r]$ such that
$$
\|\varphi_i-\psi_i\|_\infty<\frac{\eps}{8}
\qquad\text{for every }1\le i\le s.
$$
Put
$$
a_i=\int\psi_i\,d\mu,
\qquad
C=1+\max_{1\le i\le s}\|\psi_i\|_\infty.
$$
In particular, $|a_i|\le C$.  Let
$\mathcal A^*=\bigcup_{n\geq0}\mathcal A^n$, where
$\mathcal A^0=\{\emptyset\}$.  For $u,v\in\mathcal A$ and
$c\in\mathcal A^*$, the expression $ucv$ denotes their concatenation.
Since $A$ is irreducible, the number
$$
D
=
\max_{u,v\in\mathcal A}
\min\bigl\{|c|:c\in\mathcal A^*,\ ucv\in\mathcal L(X)\bigr\}
$$
is finite.  Also put
$
c_0=\frac{1}{2|\mathcal A|^2}.
$
Choose $\delta>0$ so that
$
\delta<\min\left\{\frac{\eta}{4},\frac{\eps}{16}\right\}.
$
If $h_\mu(\sigma)>0$, we additionally require $\delta<h_\mu(\sigma)/2$.

For a function $\psi\in C(X)$, write
$$
S_n\psi(x)=\sum_{j=0}^{n-1}\psi(\sigma^j x).
$$
By the Birkhoff ergodic theorem and the Shannon--McMillan--Breiman theorem \cite{Walters1982}, for all sufficiently large $n$ the set
$$
\begin{aligned}
T_n
=
\Bigl\{x\in X:\;&
\left|\frac1n S_n\psi_i(x)-a_i\right|<\delta
~\text{for every }1\le i\le s,
\\
&\text{and}\quad
\mu([x_0x_1\cdots x_{n-1}])
\le
\exp\bigl(-n(h_\mu(\sigma)-\delta)\bigr)
\Bigr\}
\end{aligned}
$$
has measure greater than $1/2$.  We now choose one such $n$, large enough in addition that
\begin{equation}\label{mp:eq:local-approx-large-n}
 n>2r,
 \qquad
 \frac{4rC}{n}<\delta,
 \qquad
 \frac{2CD}{n}<\frac{\eps}{8},
 \qquad
 \frac{D\htop(\sigma,X)-\log c_0}{n}<\frac{\eta}{4}.
\end{equation}
When $h_\mu(\sigma)>0$, we also require
\begin{equation}\label{mp:eq:local-approx-positive-count}
 n(h_\mu(\sigma)-\delta)+\log c_0>0.
\end{equation}

Let $\mathcal W_n$ be the set of admissible words $w$ of length $n$ for which $[w]\cap T_n\ne\varnothing$.  For every $w\in\mathcal W_n$, choose a point $x^w\in[w]\cap T_n$.  If $y\in[w]$, then
$$
\psi_i(\sigma^j y)=\psi_i(\sigma^j x^w)
\qquad
(r\le j\le n-r-1).
$$
Thus at most $2r$ terms in the two length-$n$ Birkhoff sums can differ, and each difference has absolute value at most $2C$.  By \eqref{mp:eq:local-approx-large-n},
\begin{equation}\label{mp:eq:uniform-typical-word}
\left|
\frac1nS_n\psi_i(y)-a_i
\right|
<
\delta+\frac{4rC}{n}
<2\delta
\end{equation}
for every $w\in\mathcal W_n$, every $y\in[w]$, and every $1\le i\le s$.  

For $u,v\in\mathcal A$, define
$$
\mathcal W_n(u,v)
=
\{w=w_0\cdots w_{n-1}\in\mathcal W_n:w_0=u,\ w_{n-1}=v\}.
$$
The sets $\mathcal W_n(u,v)$ form a partition of $\mathcal W_n$.  Since
distinct words of length $n$ determine disjoint cylinders and
$T_n\subset\bigcup_{w\in\mathcal W_n}[w]$, we have
$$
\sum_{(u,v)\in\mathcal A^2}
\mu\left(\bigcup_{w\in\mathcal W_n(u,v)}[w]\right)
=
\mu\left(\bigcup_{w\in\mathcal W_n}[w]\right)
\geq \mu(T_n)
>
\frac12.
$$
Therefore there exists $(a,b)\in\mathcal A^2$ such that, with
$\mathcal V_n:=\mathcal W_n(a,b)$,
$
\mu\left(\bigcup_{w\in\mathcal V_n}[w]\right)
\geq
\frac{1}{2|\mathcal A|^2}
=c_0.
$
Write $q=|\mathcal V_n|$.  The Shannon--McMillan--Breiman condition in the definition of $T_n$ gives
$$
\mu([w])
\le
\exp\bigl(-n(h_\mu(\sigma)-\delta)\bigr)
\qquad\text{for every }w\in\mathcal V_n.
$$
Consequently,
\begin{equation}\label{mp:eq:typical-word-count}
q
\ge
c_0\exp\bigl(n(h_\mu(\sigma)-\delta)\bigr).
\end{equation}

Choose a fixed word $c$ of length $\ell\le D$ such that $bca$ is
admissible.  For $w\in\mathcal V_n$, put
$
B_w=wc,
~
k=n+\ell.
$
Since every word in $\mathcal V_n$ begins with $a$ and ends with $b$,
arbitrary bi-infinite concatenations of the blocks $B_w$ are admissible in
$X$.

Define a zero-one transition matrix $\widehat A$ indexed by the finite alphabet
$
\widehat{\mathcal A}
=
\mathcal V_n\times\{0,1,\ldots,k-1\}
$
as follows: 
$$
\widehat A_{(w,j),(w',j')}
=
\begin{cases}
1,& w=w',\ 0\le j<k-1,\ \text{and }j'=j+1,\\
1,& j=k-1\ \text{and }j'=0,\\
0,& \text{otherwise}.
\end{cases}
$$
Let $\widehat Y=X_{\widehat A}$.
For
$
\widehat y=((w_t,j_t))_{t\in\mathbb Z}\in\widehat Y,
$
we call $j_t\in\{0,\ldots,k-1\}$ the \emph{phase} of $\widehat y$ at time
$t$.  Thus phase $0$ marks the beginning of an aligned block $B_{w_t}$.
Given any two
states $(w,j),(w',j')\in\widehat{\mathcal A}$, the prescribed transitions
give an admissible chain
$$
(w,j),\ldots,(w,k-1),(w',0),\ldots,(w',j').
$$
Consequently,
$
\bigl(\widehat A^{\,k-j+j'}\bigr)_{(w,j),(w',j')}>0,
$
so $\widehat A$ is irreducible.  Write
$B_w=B_w(0)\cdots B_w(k-1)$ and define the one-block code
$$
\pi:\widehat Y\longrightarrow X
$$
by
$$
\bigl(\pi(\widehat y)\bigr)_t=B_{w_t}(j_t)
\qquad
\text{for }
\widehat y=((w_t,j_t))_{t\in\mathbb Z}\in\widehat Y.
$$
The transition rules ensure that $\pi(\widehat y)$ is a shift of a
bi-infinite concatenation of the blocks $B_w$, and hence belongs to $X$.
Thus
$
Y:=\pi(\widehat Y)
$
is a sofic subshift of $X$.  Moreover, $\widehat Y$ is transitive because
$\widehat A$ is irreducible, and transitivity passes to factor images.
Therefore $Y$ is transitive.

For every $m\ge1$ and every $(w_1,\ldots,w_m)\in\mathcal V_n^m$, the word
$
B_{w_1}B_{w_2}\cdots B_{w_m}
$
of length $mk$ belongs to the language of $Y$.  Distinct $m$-tuples give distinct words, because the first $n$ symbols in each aligned length-$k$ block recover the corresponding word $w_j$.  Since $|\mathcal V_n|=q$, it follows that
$$
|\mathcal L_{mk}(Y)|\geq q^m
\qquad\text{for every }m\geq1.
$$
For every subshift, topological entropy is given by the word-complexity
formula (see \cite[Chapter~4]{LindMarcus2021})
$
\htop(\sigma,Y)
=
\lim_{N\to\infty}\frac1N\log|\mathcal L_N(Y)|.
$
Taking the subsequence $N=mk$ and using the preceding lower bound yields
\begin{equation}\label{mp:eq:sofic-entropy-lower}
\htop(\sigma,Y)
=
\lim_{m\to\infty}\frac1{mk}\log|\mathcal L_{mk}(Y)|
\geq
\lim_{m\to\infty}\frac1{mk}\log q^m
=
\frac1k\log q.
\end{equation}
If $h_\mu(\sigma)>0$, then \eqref{mp:eq:local-approx-positive-count}, \eqref{mp:eq:typical-word-count}, and $k\le n+D$ imply
$
\htop(\sigma,Y)
\ge
\frac{n(h_\mu(\sigma)-\delta)+\log c_0}{n+D}.
$
Hence
$$
\begin{aligned}
h_\mu(\sigma)-\htop(\sigma,Y)
\le
\frac{D h_\mu(\sigma)+n\delta-\log c_0}{n+D}
\le
\delta+\frac{D\htop(\sigma,X)-\log c_0}{n}
<\frac{\eta}{2}.
\end{aligned}
$$
If $h_\mu(\sigma)=0$, then simply
$
\htop(\sigma,Y)\ge0>h_\mu(\sigma)-\frac{\eta}{2}.
$
Thus in all cases
\begin{equation}\label{mp:eq:sofic-near-measure-entropy}
\htop(\sigma,Y)
>
h_\mu(\sigma)-\frac{\eta}{2}.
\end{equation}

We next prove 
\begin{equation}\label{mp:eq:all-sofic-measures-local}
\mathcal M_\sigma(Y)\subset U.
\end{equation}
Suppose first that $y\in Y$ has a lift in $\widehat Y$ whose phase at time zero is $0$.  Then the next $k$ symbols of $y$ form one of the blocks $B_w=wc$.  By \eqref{mp:eq:uniform-typical-word},
$$
\left|S_n\psi_i(y)-na_i\right|<2\delta n.
$$
On the connector part there are $\ell$ terms, each differing from $a_i$ by at most $2C$.  Therefore
\begin{equation}\label{mp:eq:aligned-block-average}
\left|
\frac1kS_k\psi_i(y)-a_i
\right|
\le
2\delta+\frac{2C\ell}{k}
\le
2\delta+\frac{2CD}{n}
=:\theta.
\end{equation}

Fix $y\in Y$ and a lift $\widehat y\in\pi^{-1}(y)$.  For each
$N\ge1$, the phase coordinate of $\widehat y$ partitions the index
interval $[0,N)$ into a collection of complete aligned intervals of
length $k$, together with at most two boundary intervals whose total
length is less than $2k$.  On every complete aligned interval,
\eqref{mp:eq:aligned-block-average} bounds the corresponding deviation
from $a_i$ by $\theta k$.  The contribution of the boundary intervals
is bounded by $2C$ per index, since $|\psi_i|,|a_i|\le C$.  Summing over
this partition and dividing by $N$ therefore yields
\begin{equation}\label{mp:eq:uniform-long-average}
\left|
\frac1N S_N\psi_i(y)-a_i
\right|
\le
\theta+\frac{4Ck}{N}.
\end{equation}
Let $\nu\in \mathcal M_\sigma(Y)$.  Invariance yields
$
\int\psi_i\,d\nu
=
\frac1N\int S_N\psi_i\,d\nu.
$
Integrating \eqref{mp:eq:uniform-long-average} and then letting $N\to\infty$, we obtain
$
\left|
\int\psi_i\,d\nu-a_i
\right|
\le\theta.
$
By the choices of $\delta$ and $n$ in \eqref{mp:eq:local-approx-large-n},
$
\theta
=
2\delta+\frac{2CD}{n}
<\frac{\eps}{4}.
$
Consequently,
$$
\begin{aligned}
\left|
\int\varphi_i\,d\nu-
\int\varphi_i\,d\mu
\right|
\le
2\|\varphi_i-\psi_i\|_\infty
+
\left|
\int\psi_i\,d\nu-
\int\psi_i\,d\mu
\right|<
\frac{\eps}{4}+\frac{\eps}{4}
=
\frac{\eps}{2}
<\eps.
\end{aligned}
$$
Thus $\nu\in\mathcal U_0\subset U$, proving \eqref{mp:eq:all-sofic-measures-local}.

Finally, Marcus's inside entropy-approximation theorem for sofic shifts \cite[Proposition~3]{Marcus1985} gives an SFT
$
Z\subset Y
$
with
$
\htop(\sigma,Z)
>
\htop(\sigma,Y)-\frac{\eta}{2}.
$
By \cite[Definition~4.4.2 and
Theorem~4.4.4]{LindMarcus2021} there is a transitive SFT
$\Lambda\subset Z$ such that
$
\htop(\sigma,\Lambda)=\htop(\sigma,Z).
$
Together with \eqref{mp:eq:sofic-near-measure-entropy}, this gives
$
\htop(\sigma,\Lambda)
>
h_\mu(\sigma)-\eta.
$
Since $\Lambda\subset Z\subset Y$, we also have
$
\mathcal M_\sigma(\Lambda)
\subset
\mathcal M_\sigma(Y)
\subset U.
$
\end{proof}

\subsection*{Acknowledgments}
X. Hou is supported by the National Natural Science Foundation of China (No. 12401231) and the Fundamental Research Funds for the Central Universities (No. DUT25RC(3)106). W. Lin is supported by the National Natural Science Foundation of China (No. 124B2010). X. Tian is supported by the National Natural Science Foundation of China (No. 12471182).

\noindent\textbf{Conflict of interest.} The authors declare that there is no conflict of interest.

\noindent\textbf{Data availability.} No data were used for the research described in the article.

\phantomsection
\addcontentsline{toc}{section}{References}
\bibliographystyle{plain}

\end{document}